\documentclass[a4paper,11pt]{amsart}

\usepackage{hyperref}
\usepackage{amsfonts, amssymb}
\usepackage{amsmath}
\usepackage{amsthm}
\usepackage[foot]{amsaddr}
\usepackage{graphicx}
\usepackage{color} 
\usepackage{aliascnt} 
\usepackage{float}
\usepackage{tikz}
\usepackage{mathtools}
\usepackage{enumitem}
\usepackage{stmaryrd}
\usepackage[makeroom]{cancel}
\usepackage{natbib}

\usepackage[a4paper]{geometry} 
\newtheorem{theorem}{Theorem}[section]
\newaliascnt{lemma}{theorem}  
\newtheorem{lemma}[lemma]{Lemma}  
\aliascntresetthe{lemma}  
\newaliascnt{corollary}{theorem}  
 
\aliascntresetthe{corollary}
\newaliascnt{proposition}{theorem}  
\newtheorem{proposition}[proposition]{Proposition} 
\aliascntresetthe{proposition}
\newaliascnt{definition}{theorem}  
\newtheorem{definition}[definition]{Definition} 
\aliascntresetthe{definition}
\newaliascnt{remark}{theorem}  
\newtheorem{remark}[remark]{Remark}  
\aliascntresetthe{remark}

\newcommand{\abs}[1]{\vert #1 \vert}
\newcommand{\norm}[1]{\Vert #1 \Vert}
\newcommand{\R}{\mathbb{R}} 

\newcommand{\Z}{\mathbb{Z}}
\newcommand{\N}{\mathbb{N}}
\def\x {\,\mathrm{d}x}
\def\y {\,\mathrm{d}y}

\def\t {\,\mathrm{d}t}
\def\s {\,\mathrm{d}s}

\def\Sy {\,\mathrm{d}\sigma(y)}

\newcommand{\eps}{{\varepsilon}}
\newcommand{\OmEps}{\Omega_\varepsilon}
\newcommand{\TEps}{\mathcal{T}_\varepsilon}

\newcommand{\weak}{\rightharpoonup}

\newcommand{\GamD}{{\Gamma_\mathrm{D}}}
\newcommand{\GamN}{{\Gamma_\mathrm{N}}}
\newcommand{\Hgam}{H^1_{\Gamma_\mathrm{D}}}

\newcommand{\spaceH}{\mathcal{H}}

\newcommand{\vareps}{\varepsilon}
\newcommand{\ueps}{u_{\varepsilon}}
\newcommand{\weps}{w_{\varepsilon}}

\newcommand{\foe}{\frac{1}{\varepsilon}}

\newcommand{\lay}{\mathrm{M}}

\title{Effective elastic wave transmission through a periodically voided interface}

\author[]{M.~Gahn$^{*}$, T.~Lochner$^{*}$, M.~A.~Peter$^{\dagger}$ }

\address{${}^{*}$ Institute of Mathematics, University of Augsburg, Universit\"atsstr.~12a, 86159 Augsburg, Germany}
\address{${}^{\dagger}$ Institute of Mathematics \& Centre for Advanced Analytics and Predictive Sciences (CAAPS), University of Augsburg, Universit\"atsstr.~12a, 86159 Augsburg, Germany}

\email{markus.gahn@uni-a.de (ORCID:~0000-0002-2216-145X)}
\email{tanja.lochner@uni-a.de}
\email{(ORCID:~0000-0003-1962-5724), malte.peter@uni-a.de (ORCID:~0000-0001-6107-9806)}

\date{\today}

\begin{document}

\begin{abstract}
   Effective interface conditions for a periodically voided thin layer separating two homogeneous bulk regions are derived for the elastic wave equation by taking the simultaneous limit of vanishing layer periodicity and layer thickness. The limit problems are obtained using the unfolding method for thin perforated domains. We consider three different scalings of the material parameters in the layer that characterise its stiffness, each leading to a distinct type of interface condition and requiring the solution of scaling‑dependent cell problems. Depending on the scaling, the resulting effective model yields either a membrane equation or a Kirchhoff–Love plate equation. In the critical regime of reduced stiffness, the interface equation additionally depends on the microscopic variable. By selecting appropriate cell problems, this equation can be reformulated as an effective interface condition between the bulk domains.
\end{abstract}

\maketitle

\noindent
{\bf Keywords:} Effective interface conditions, wave transmission, microstructured layer, periodic homogenisation of thin layers

\noindent
{\bf Mathematics Subject Classification (MSC):} 35B27, 35L53, 35Q74, 74Q10

\section{Introduction}



The interaction of waves with microstructured layers has become an increasingly active research area. Applications include the use of layers to guide waves or to reduce or enhance transmission or reflection. Many of these developments have been framed in the context of metamaterials, which are materials whose small-scale internal architecture is deliberately designed to produce novel and remarkable behaviours, e.g.~invisibility cloaks, superlenses and wave-guiding devices \cite{davies2025roadmap,craster2023mechanical,Chaplain2025}.
 The use of single periodic layers of inclusions or voids embedded into a matrix of other material is of particular relevance for absorption of wave energy, leading to decreased reflection, as e.g.~in submarine coatings (``Alberich tiles''), and such structures have gained increased interest again, see e.g.~\cite{Skvortsov2021,PhysRevB.102.214308} and references therein. In the long-wavelength regime (compared to the size of the inclusions), homogenisation techniques are the method of choice to derive effective boundary/transmission conditions by considering a single layer of periodically distributed inclusions or channels and passing to the limit as the periodicity length and the thickness of the layer vanish simultaneously, see e.g.~\cite{DELOURME201228}. We also note that higher order approaches taking into account finite size effects have been developed for such acoustic problems \cite{Marigo_Maurel2017}.

Before going into the details of the challenges of the rigorous mathematical treatment of such problems, we introduce the precise system we consider here, which is given by the elastic wave equations 
\begin{align*}
\partial_t(\varrho^\pm\partial_t u_\eps^\pm)-\nabla\cdot(A^\pm e(u_\eps^\pm))&=\varrho^\pm f^\pm &&\text{ in }(0,T)\times\OmEps^\pm,\\
         \partial_t(\frac{1}{\eps}\varrho_\eps^\lay\partial_t u_\eps^\lay)-\nabla\cdot(\eps^{\gamma} A_\eps^\lay e(u_\eps^\lay))&=\frac{1}{\eps}\varrho_\eps^\lay f_\eps^\lay &&\text{ in }(0,T)\times\OmEps^\lay,
\end{align*}
describing the evolution of the bulk displacements $\ueps^{\pm}$ in the bulk domains $\OmEps^{\pm}$ and the displacement $\ueps^\lay$ in the thin perforated layer $\OmEps^\lay$ separating $\OmEps^{+}$ from $\OmEps^{-}$. The parameter $\gamma $ is related to the stiffness of the material and we consider the cases $\gamma \in \{1,-1,-3\}$. The interface between the bulk domain $\OmEps^{\pm}$ and the thin layer is denoted by $S_{\eps}^{\pm}$ and continuity of the traces and the normal stresses are assumed on this interface:
\begin{align*}
	u_\eps^\pm &= u_\eps^\lay &&\text{ on }(0,T)\times S_\eps^\pm,\\
        (A^{\pm} e(u_\eps^{\pm}))n &=\eps^{\gamma}(A^\lay_\eps e(u^\lay_\eps))n &&\text{ on }(0,T)\times S^{\pm}_\eps.
\end{align*}
The system is closed by suitable initial and boundary conditions, see Section \ref{sec:statement_problem} for more details. 

For $\eps \to 0$ the thin layer $\OmEps^\lay$ reduces to a lower dimensional interface $\omega$ separating the bulk domains $\Omega^+$ and $\Omega^-$. The main objective of this paper is the rigorous derivation of a macroscopic model for $\eps \to 0$ together with suitable interface  conditions across $\omega$. 
While it is natural to expect elastic wave equations for the macroscopic displacement $u_0^{\pm}$ in the bulk regions, the key challenge lies in identifying the effective interface conditions, which, of course, depend on the specific choice of $\gamma$.

For the derivation of the macroscopic model, we use the unfolding method, see the monograph \cite{Cior:2018} for an overview. While the sequence of displacements $\ueps^\lay$ in the thin layer depends on the $\eps$-dependent domain $\OmEps^\lay$, the associated unfolded sequence $\TEps(\ueps)$ is defined on the fixed domain $\omega \times Y_0$, with the reference element $Y_0$ representing the  geometry of the porous part of the micro-domain. 
Compactness results for the unfolded sequence are based on uniform a priori estimates with respect to the scaling parameter $\eps$. For this, we have to use different types of Korn inequalities. For $\gamma =1,-1$, it turns out that we can control the displacement in the thin layer by the bulk displacement and its symmetric gradient. For the case $\gamma = -3$, this approach does not work and we use a Korn inequality which allows to control the displacement in the layer by its symmetric gradient only at the price of having to consider a clamped plate, i.e.~we impose a vanishing Dirichlet boundary condition. Based on these a priori estimates, we obtain by well-known (two-scale) compactness results the weak convergence of the unfolded sequence in the thin layer, where we also require the assumption of a clamped plate for the case $\gamma = -1$. Here, the important difference between the cases $\gamma =1$ and $\gamma = -1,-3$ emerges. While in the latter case, the limit displacement (of zeroth order) is independent of the microscopic variable $y \in Y_0$, in the case $\gamma = 1$ it depends on the macroscopic variable $x' \in \omega$ \textit{and} the microscopic variable $y$. For the case $\gamma = -3$, we obtain a Kirchhoff--Love (plate) displacement as the limit of the unfolded sequence.

The next step is to pass to the limit $\eps\to 0$ in the weak microscopic equation and to identify the complete limit problem. For this, we have to choose suitable test functions adapted to the structure of the limit displacements. For $\gamma = -1$, we obtain continuity of the bulk displacements $u^+$ and $u^-$ across the interface $\omega$ equal to the limit of the displacement in the layer $u^\lay$. The jump of the normal stress is determined by a membrane equation for $u^\lay$ containing effective coefficients carrying information about the microscopic geometry of the perforated layer. For the case $\gamma  = -3$, we again obtain continuity of the bulk displacements, given by the zeroth-order limit of the unfolded sequence in the layer. Now, the jump of the normal stress is given by a a Kirchhoff--Love plate equation with effective coefficients. The situations changes completely in the case $\gamma = 1$, when the limit of the layer displacement depends additionally on the microscopic variable, that is we obtain a two-scale homogenised model for $u^{\pm}$ and $u^\lay$ in the limit. Here, in every macroscopic point $x'\in \omega$ we have to solve a cell problem for $u^\lay$, depending on the time derivative of the trace of the bulk displacements, i.e.~$\partial_t u^{\pm}|_{\omega}$. Now, the idea is to find a representation for $u^\lay$ via suitable cell problems with coefficients given by the force terms in the layer and the traces of the bulk displacements. While this procedure is well-known for periodic domains, the identification of this representation is considerably more complicated in the case of the thin layer, in particular regarding the hyperbolic character of the cell problem for $u^\lay$ coupled to the bulk solutions on the boundary. Finally, using this representation, we can use the two-scale homogenised problem together with suitable test functions to obtain an effective interface condition. As a result, we find a wave equation for each of the bulk displacements $u^+$ and $u^-$, where the normal stress (not the jump) is given by an effective boundary condition dependent on $u^+$ and $u^-$ (and their time derivatives) characterised by effective coefficients containing a memory effect. From a physical point of view, the thin layer behaves like a thick interface in the limit in this case.

From a mathematical point of view, our results are generalisations of \cite{Neuss:2007} and \cite{GahnEffectiveTransmissionContinuous} to elastic materials. More precisely, in \cite{Neuss:2007} a system of (nonlinear) reaction--diffusion equations was considered for the case $\gamma = 1$, where a critical point was to establish strong two-scale convergence in the thin layer. The structure of the obtained two-scale homogenised system is similar to our limit model (for $\gamma = 1$), where in our contribution we also derive the effective interface conditions across $\omega$. We emphasise that this is only possible here since our problem is linear. In \cite{GahnEffectiveTransmissionContinuous}, the case $\gamma = -1$ was considered for a reaction--diffusion equation (together with some intermediate results for $\gamma \in (-1,1)$). In \cite{GahnNeussRaduKnabner2018a}, these results were extended to a nonlinear coupling condition for the fluxes across the interfaces between the thin layer and the bulk regions. In~\cite{ClopeauFerrinGilbertc2001,GilbertMikelic2000}, homogenisation was carried out for a (linearised) fluid--structure interaction problem under varying ratios between the viscosity and the characteristic scale of the elasticity coefficients. This led to different scalings of the governing equations with respect to~$\varepsilon$. While their analysis was conducted in a fully periodic setting, the decomposition of the limit functions in the case $\gamma = 1$ closely resembles our situation and similarly gives rise to a memory effect. However, our model additionally requires accounting for the coupling to the bulk regions and the fact that no periodicity is assumed in the $x_3$-direction (the direction normal to the layer)---a structural distinction that has not yet been made explicit but plays a crucial role in our setting.
In \cite{gahn2025effective}, fluid flow through a thin (rigid) perforated layer separating two bulk regions was considered (similar to our geometry) with linearised fluid--structure interaction (together with a reaction--diffusion--advection problem), and the simultaneous homogenisation and dimension reduction was performed, leading to an effective interface condition for the fluid flow. This model was generalised to fluid flow through a thin elastic perforated layer in \cite{gahn2025b_effective} for $\gamma = -1$ and $\gamma = -3$. In our paper, we use similar ideas 
for the identification of the effective interface conditions. We also mention the work \cite{OrlikPanasenkoStavre2021}, where the fluid-- structure interaction between two fluid-filled bulk domains and a thin (non-perforated) elastic layer is investigated for $\eps \to 0$. The case of an elastic perforated thin layer, not coupled to bulk regions, with a scaling similar to our case $\gamma = -3$ was treated in \cite{griso2020homogenization}. In a more general context, we also mention that homogenisation problems in which a thin perforated interface is homogenised are sometimes referred to as sieve problems, see e.g.~\cite{Cior:2018}, \cite{Schweizer:2020}. Moreover, related inverse homogenisation problems, in which the geometry of the microscopic inclusions/voids is to be determined from macroscopic displacement measurements have recently been considered in \cite{Loch:2022,Loch:2024}.

Before giving the complete problem statement including all assumptions in \S \ref{sec:statement_problem}, we close this Section with a few notes on notation. The main results and the obtained macroscopic model are summarised in \S \ref{sec:mainresults}, distiguishing the cases $\gamma = 1$, $\gamma = -1$ and $\gamma = -3$ in \S\ref{sec:mainresults_gam1}, \S\ref{sec:mainresults_gam-1} and \S\ref{sec:mainresults_gam-3}, respectively. Existence and uniform boundedness of the microscopic solutions is proven in a unified way in \S\ref{sec:existence_boundedness} before the resulting compactness results for the sequences of microscopic solution are discussed in \S\ref{sec:compactness}. Finally, the limit problems are identified in \S\ref{sec:identification}.

\subsection{Notation}
For an open set $\Omega \subset \R^n$  and $f : \Omega \to \R^m$ for $n,m \in \N$, we  denote the derivative  of $f$ by $\partial f$ and identify it with its Jacobian matrix given by $(\partial f(x))_{ij} \coloneqq  \partial_{x_j} f_i(x) = \partial_j f_i(x)$ for $i=1,\ldots,n$, and $j=1,\ldots,m$. Further, we define  the gradient of $f\colon \Omega \rightarrow \R^m$ as the transpose of the Jacobian matrix, $\nabla f \coloneqq \partial f^\top$. For $n=m$ we define the symmetric gradient by
\begin{align*}
    e(f):=\frac12 \left(\nabla f + \nabla f^T\right).
\end{align*}
For two matrices $B,C \in \R^{n\times n}$ we define  the Frobenius-product $B:C\coloneqq \mathrm{tr}(B^{\top} C) =  \sum_{i,j=1}^n B_{ij}C_{ij}$.
Let $n,d\in \N$, then for $\Omega\subset \R^n$ a bounded Lipschitz domain, we denote by $L^p(\Omega)^d, \, W^{1,p}(\Omega)^d $ the standard Lebesgue and Sobolev spaces with $p \in [1,\infty]$. In particular, for $p=2$, we write $H^1(\Omega)^d\coloneqq W^{1,2}(\Omega)^d$.  For the norms, we omit the upper index $d$, for example we write $\|\cdot\|_{L^p(\Omega)}$ instead of $\|\cdot \|_{L^p(\Omega)^d}$. With $\Lambda$ a subset of $\partial \Omega$, we let $H^1_\Lambda(\Omega)$ denote the $H^1(\Omega)$ functions vanishing on $\Lambda$ (in the sense of traces). 
For a separable Banach space $X$ and $p \in [1,\infty]$, we denote the usual Bochner spaces by $L^p(\Omega,X)$, 
or $L^p((0, T), X)$ when time is involved. 
For the dual space of $X$, we use the notation $X'$.

\section{Statement of the problem}\label{sec:statement_problem}

We consider elastic wave propagation through a domain made of two (possibly different) materials, which are separated by a thin interface of $\eps$-periodically voided elastic medium. Let $\omega\subset\R^2$ be a connected domain with Lipschitz boundary, $\Omega=(-L,L)\times\omega$, $L>0$ fixed, $\eps>0$ and $S=(0,T)$ with final time $T>0$. We also identify $\omega $ with its natural embedding $\{0\} \times \omega $ in $\R^3$. We divide the domain $\Omega$ into 
$$\Omega^{-}=(-L,0)\times\omega,\quad \Omega^{+}=(0,L)\times\omega,\quad \{0\} \times \omega$$
(see \autoref{subfig:domain_omega} for an illustration) and into
$$\OmEps^{-}=\left(-L,-\frac{\eps}{2}\right)\times\omega,\quad \OmEps^{+}=\left(\frac{\eps}{2},L\right)\times\omega,\quad \OmEps^{0}=\left(-\frac{\eps}{2},\frac{\eps}{2}\right)\times\omega,\quad S_{\eps}^\pm=\Bigl\{\pm\frac{\eps}{2}\Bigl\}\times\omega$$
(see \autoref{subfig:domain_omega_eps}).
\begin{figure}[t]
\begin{center}
\begin{minipage}[t]{0.3\textwidth}
\centering
    \begin{tikzpicture}[scale=2.6]
        \draw (-1/2,0) rectangle (1/2,1);
        \draw[magenta] (0,0) -- (0,1);
        \draw (-1/4,0.5) node {$\Omega^{-}$};
        \draw[magenta] (-0.07,0.8) node {$\omega$};
        \draw (1/4,0.5) node {$\Omega^{+}$};
        \draw (-1/2,0) node[below] {$-L$};
        \draw (0,0) node[below] {$0$};
        \draw (1/2,0) node[below] {$L$};    
    \end{tikzpicture}
    \caption{Illustration of the domain $\Omega$}
    \label{subfig:domain_omega}
\end{minipage}
$\quad$
\begin{minipage}[t]{0.3\textwidth}
\centering
\begin{tikzpicture}[scale=2.6]
    \draw (-3/4,0) rectangle (3/4,1);
    \draw[magenta] (-0.14,0) -- (-0.14,1);
    \draw[magenta] (0.14,0) -- (0.14,1);
    \draw (-0.55,0.5) node {$\OmEps^{-}$};
    \draw[magenta] (-0.2,0.8) node {$S_{\eps}^{-}$};
    \draw (0,0.5) node {$\OmEps^{0}$};
    \draw[magenta] (0.2,0.8) node {$S_{\eps}^{+}$};
    \draw (0.55,0.5) node {$\OmEps^{+}$};
    \draw (-3/4,0) node[below] {$-L$};
    \draw (-0.14,0) node[below] {$-\frac{\eps}{2}$};
    \draw (0.14,0) node[below] {$\frac{\eps}{2}$};
    \draw (3/4,0) node[below] {$L$};    
\end{tikzpicture}

    \caption{Subdomains of $\Omega$ depending on $\eps$}
    \label{subfig:domain_omega_eps}
\end{minipage}
$\quad$
\begin{minipage}[t]{0.3\textwidth}
\centering
    \begin{tikzpicture}[scale=2.5]
	   \draw (-1/2,0) rectangle (1/2,1);	
	   \draw (0,0.5) circle [x radius=0.35cm, y radius=0.17cm];
        \draw[magenta] (-1/2,0) -- (-1/2,1);
        \draw[magenta] (1/2,0) -- (1/2,1);
	   \draw (0,0.5) node {$Y_1$};
	   \draw (0.2,0.14) node {$Y_0$};
        \draw (-1/2,0) node[below] {$-\frac{1}{2}$};
        \draw (1/2,0) node[below] {$\frac{1}{2}$};
        \draw[magenta] (-0.59,0.8) node {$S_{Y}^{-}$};
        \draw[magenta] (0.59,0.8) node {$S_{Y}^{+}$};
    \end{tikzpicture}
    \caption{Reference cell $Y$}
    \label{subfig:reference_cell}
\end{minipage}
\end{center}
\end{figure}
To model the periodic in-plane structure of $\OmEps^0$, we define the reference cell $Y=(-\frac{1}{2},\frac{1}{2})\times(0,1)^2$, the holes $Y_1\subset Y$ open with $\overline{Y_1}\subset Y$, $Y_0=Y\setminus Y_1$, $Y'=(0,1)^2$ and $S_{Y}^\pm=\Bigl\{\pm\frac{1}{2}\Bigl\}\times Y'$ (see \autoref{subfig:reference_cell}).
Denote by
\begin{itemize}
    \item $\Lambda_{\eps}=\{\xi\in\Z^2:\eps(\xi+Y')\subset\omega\}$, $\quad \Lambda^{0}_{\eps}=\{0\}\times\Lambda_{\eps}$
\end{itemize}
and assume $\overline{\omega} = \bigcup_{k\in\Lambda_\eps}{\eps (\overline{Y}+k)} $ for all $\eps$ small enough.
With this at hand, we can define the layer $\OmEps^\lay$ and the domain $\OmEps$ as described in the motivation as
\begin{itemize}
   \item $\OmEps^\lay=\OmEps^{0}\setminus\overline{\{ x\in \Omega^{0}_{\eps}:\{\frac{x}{\eps}\}_Y\in Y_1\}}$, 
   \item $\OmEps=\Omega\setminus\overline{\{ x\in\Omega^{0}_{\eps}:\{\frac{x}{\eps}\}_Y\in Y_1\}}$.
\end{itemize}
Here and in what follows, $\{{x}\}_Y$ is the part inside the reference cell $Y$, i.e.~for $x\in\Omega^{0}_{\eps}$, we write $\left[{x}\right]_Y$ for the unique linear combination of the unit vectors $e_j\in\mathbb{R}^3$ with integer coordinates $\xi_j\in\mathbb{Z}$, $j\in\{2,3\}$, i.e.~$\left[{x}\right]_Y=\sum_{j=2}^3{\xi_j e_j}$, such that $\{{x}\}_Y = {x}-\left[{x}\right]_Y\in Y$.
The restrictions of functions defined on $\Omega$ to the subdomains $\OmEps^{+}$, $\OmEps^\lay$, $\OmEps^{-}$ are denoted by the superscripts $+$, $-$ and $M$, respectively. 

\begin{remark}
For the sake of simplicity, we assumed that $\omega$ is a rectangular domain with integer side-length and $\varepsilon$ is chosen in such a way that the perforated layer $\Omega_{\vareps}^{\lay}$ can be completely decomposed in shifted and translated micro-cells $\vareps (Y_0 + k)$ with $k\in \{0\} \times\mathbb{Z}^2$. In particular, no perforations intersect the lateral boundary of $\Omega_{\varepsilon}^0$. However, it is straightforward to generalise our results to curved domains $\omega$, with the additional assumption that there is some kind of "safety zone" around the boundary $\partial \omega$ where no perforations intersect the lateral boundary. 
\end{remark}

Before stating the problem to be considered in what follows, we introduce the periodic unfolding operator for future reference. Such an operator was first defined (not using the name unfolding operator common nowadays) for domains in  \cite{VogtHomogenization} and later analysed in more detail for example in \cite{ArbogastDouglasHornung,Cioranescu_Unfolding2}. An unfolding operator for thin domains (also called partial unfolding operator) was first introduced in \cite{Neuss:2007}, see also \cite{CioranescuDamlamianGrisoOnofrei2008,Griso:2015}. We also refer to \cite{Cior:2018} for a detailed overview about this topic.
\begin{definition}\label{def:unfolding_operator}
    For $\phi$ Lebesgue-measurable on $\Omega^\lay_{\eps}$, we define
    \begin{align*}
     \TEps(\phi)(x',y)=
		\phi(\eps\left[\frac{(0,x')}{\eps}\right]_Y+\eps y) \quad &\text{ for a.e. }(x',y)\in \omega \times Y_0
    \end{align*}
\end{definition}

Now, let $\eps\ll L$, $\GamD\subset\partial\Omega$ be a closed subset with positive (two-dimensional) measure and $\GamN=\partial\Omega\setminus\GamD$.
We define for every $\eps$ the (vector-valued) three-dimensional wave-propagation problem 
\begin{subequations}
\begin{equation}\label{eq:strong_formulation}
 	\begin{aligned}
 		\partial_t(\varrho^\pm\partial_t u_\eps^\pm)-\nabla\cdot(A^\pm e(u_\eps^\pm))&=\varrho^\pm f^\pm &&\text{ in }S\times\OmEps^\pm,\\
         \partial_t(\frac{1}{\eps}\varrho_\eps^\lay\partial_t u_\eps^\lay)-\nabla\cdot(\eps^{\gamma} A_\eps^\lay e(u_\eps^\lay))&=\frac{1}{\eps}\varrho_\eps^\lay f_\eps^\lay &&\text{ in }S\times\OmEps^\lay,
 	\end{aligned}
\end{equation}
where $e(u)=\frac{1}{2}(\nabla u+(\nabla u)^T)$ is the symmetrised gradient, $\gamma \in \{1,-1,-3\}$ is related to the stiffness of the thin layer,  $A^\eps$ is the elasticity tensor of fourth order,
\[ A^\eps(x)=\begin{cases}
        A^\pm(x), &x\in\OmEps^\pm,\\
        \eps^{\gamma} A^\lay\left(x',\frac{x}{\eps}\right), &x\in\OmEps^\lay,
    \end{cases} \] 
and we write $A_{\eps}^\lay(x):= A^\lay\left(x',\frac{x}{\eps}\right)$. Moroever,
$f^\eps$ is the body load (force per mass),
\[ f^\eps(t,x)=\begin{cases}
        f^\pm(t,x), &x\in S\times\OmEps^\pm,\\
        f^\lay_\eps(t,x), &x\in S\times\OmEps^\lay,
    \end{cases} \]
and $\varrho^\eps$ is the (mass density) function,
\[ \varrho^\eps(x)=\begin{cases}
        \varrho^\pm(x), &x\in\OmEps^\pm,\\
        \frac{1}{\eps} \varrho^\lay\left(x',\frac{x}{\eps}\right), &x\in\OmEps^\lay,
    \end{cases} \]
where, again, we write $\varrho_{\eps}^\lay(x):= \varrho^\lay\left(x',\frac{x}{\eps}\right)$. Further, we prescribe the boundary conditions
\begin{equation}\label{eq:boundary_cond}
	\begin{aligned}
		\ueps &= 0 &&\text{ on }S\times\GamD,\\
		(A^\eps e(\ueps))\nu &= g &&\text{ on }S\times\GamN,\\
		(A^\eps e(\ueps))\nu &= 0 &&\text{ on }S\times\partial\OmEps\setminus\partial\Omega,
	\end{aligned}
\end{equation}
at the exterior and void boundaries, respectively,
where $\nu$ is the outward-pointing normal to $\GamN$ resp.~$\partial\OmEps\setminus(\GamN\cup\GamD)$ and $g$ is a boundary force. The bulk domains and the layer are coupled by natural transmission conditions, i.e.~the continuity of the displacements and of the normal stress vectors,
\begin{equation}\label{eq:transmission_cond}
	\begin{aligned}
		u_\eps^\pm &= u_\eps^\lay &&\text{ on }S\times S_\eps^\pm,\\
        (A^{\pm} e(u_\eps^{\pm}))n &=\eps^{\gamma}(A^\lay_\eps e(u^\lay_\eps))n &&\text{ on }S\times S^{\pm}_\eps,
	\end{aligned}
\end{equation}
where $n$ is the unit normal with direction from $\OmEps^\lay$ to $\OmEps^\pm$, 
Finally, the initial conditions
\begin{equation}\label{eq:initial_cond}
	\begin{array}{rl}
	\ueps(0,x) &=u^\eps_0(x)\coloneq\begin{cases}
        u^\pm_0(x), &x\in\OmEps^\pm,\\
        u^\lay_{0,\eps}(x), &x\in\OmEps^\lay,
        \end{cases}\\
	\partial_t \ueps(0,x) &=u^\eps_1(x)\coloneq\begin{cases}
        u^\pm_1(x), &x\in\OmEps^\pm,\\
        u^\lay_{1,\eps}(x), &x\in\OmEps^\lay,\\
    \end{cases}
	\end{array}
\end{equation}
satisfying the compatibility conditions
\begin{equation}\label{eq:compatibility_cond}
\begin{aligned}
    u^{\pm}_0 & =u^\lay_{0,\eps} &&\text{on }S^{\pm}_\eps,\\
    (A^{\pm} e(u^{\pm}_0))n &=\eps^{\gamma}(A^\lay_\eps e(u^\lay_{0,\eps}))n &&\text{on }S^{\pm}_\eps,\\
    u^{\pm}_0 &= 0 &&\text{on }\GamD\cap\partial\OmEps^\pm,\\
    u^\lay_{0,\eps} & = 0 &&\text{on }\GamD\cap\partial\OmEps^\lay.
\end{aligned}
\end{equation}
\end{subequations}
are assumed.

Before we give the definition of a weak solution of this microscopic problem, we introduce some Hilbert spaces with suitable norms and inner products adapted to the structure of the micromodel.
We define the spaces
\[ \Hgam(\OmEps)\coloneqq\left\lbrace u\in\left[H^1(\OmEps)\right]^3 \, | \, u=0\text{ on }\GamD\right\rbrace\quad\text{and}\quad L^2_{\varrho^\eps}(\OmEps)\coloneqq\left[L^2(\OmEps)\right]^3 \]
equipped with norms
\[ \norm{u}_{\Hgam(\OmEps)} =\norm{e(u)}_{\left[L^2(\OmEps)\right]^{3\times 3}}\quad\text{and}\quad\norm{u}_{L^2_{\varrho^\eps}(\OmEps)}=\sqrt{\langle u,u\rangle_{L^2_{\varrho^\eps}(\OmEps)}},\]
where $\langle \cdot,\cdot\rangle_{\varrho^\eps}$ is the weighted inner product 
\[ \langle u,v\rangle_{L^2_{\varrho^\eps}(\OmEps)} \coloneqq \int_{\OmEps} \varrho^\eps(x)u(x)v(x)\x \]
on the space $L^2_{\varrho^\eps}(\OmEps)\times L^2_{\varrho^\eps}(\OmEps)$. In fact, $\norm{\cdot}_{\Hgam(\OmEps)}$ is a norm on $\Hgam(\OmEps)$ because of Korn's inequality for functions with vanishing trace on part of the boundary. Then, there holds
\[ \Hgam(\OmEps)\subset L^2_{\varrho^\eps}(\OmEps)=\left(L^2_{\varrho^\eps}(\OmEps)\right)^*\subset (\Hgam(\OmEps))^*, \]
and we note that $\Hgam(\OmEps)$ is a separable Hilbert space. When we write $L^2(\mathcal{O})$ for some open set $\mathcal{O}$, we equip this Hilbert space with the standard norm $\norm{u}^2_{L^2(\mathcal{O})}=\int_\mathcal{O}\abs{u}^2\x$. Now, we give the definition of a weak solution of the microscopic problem:

\begin{definition}\label{def:weak_micro_solution}
We call $\ueps :(0,T)\times \OmEps \rightarrow \R^3$ a weak solution of the micromodel $\eqref{eq:strong_formulation}$ with boundary, transmission, initial and compatibility conditions \eqref{eq:boundary_cond}--\eqref{eq:compatibility_cond}, if $\ueps\in L^2(S;\Hgam(\OmEps))$ with  $\partial_t \ueps\in L^\infty(S;L^2(\OmEps))$ 
such that 
\begin{equation}\label{eq:weak_formulation}
\begin{split}
	-&\sum_{\pm}\left\{\int_0^T\int_{\OmEps^\pm} \varrho^\pm\partial_t u^\pm_\eps\cdot\partial_t v\,\x\t -\int_0^T\int_{\OmEps^\pm} A^\pm e(u^\pm_\eps):e(v)\,\x\t   \right\} \\ 
    & -\int_0^T\int_{\OmEps^\lay} \frac{1}{\eps}\varrho^\lay_\eps\partial_t u^\lay_\eps\cdot\partial_t v\,\x\t +\int_0^T\int_{\OmEps^\lay} \eps^\gamma A^\lay_\eps e(u^\lay_\eps):e(v)\,\x\t\\
	=& \sum_{\pm} \left\{\int_0^T\int_{\OmEps^\pm} \varrho^\pm f^\pm\cdot v\,\x\t + \int_{\OmEps^\pm}\varrho^\pm u^\pm_1\cdot v(0)\,\x +\int_0^T\int_\GamN g\cdot v\,\mathrm{d}\sigma(x)\t \right\} \\
    &+\int_0^T\int_{\OmEps^\lay} \frac{1}{\eps}\varrho^\lay_\eps f^\lay_\eps\cdot v\,\x\t + \int_{\OmEps^\lay}\frac{1}{\eps}\varrho^\lay_\eps u^\lay_{1,\eps}\cdot v(0)\,\x
\end{split}
\end{equation}
for all $v\in L^2(S;\Hgam(\OmEps))$ with $\partial_t v\in L^2(S;L^2(\OmEps))$ and $v(T)=0$. Further, the initial conditions $\ueps(0) = u_0^{\eps}$ and $\partial_t \ueps (0) = u_1^{\eps}$ are to be satisfied.
\end{definition}

 The short notation of \eqref{eq:weak_formulation} is
 \begin{equation}\label{eq:weak_formulation_short}
 	\begin{split}
 		-\int_0^T\int_{\OmEps}& \varrho^\eps\partial_t \ueps\cdot\partial_t v\,\x\t+\int_0^T\int_{\OmEps} A^\eps e(\ueps):e(v)\,\x\t\\
 		&= \int_0^T\int_{\OmEps} \varrho^\eps f^\eps\cdot v\,\x\t +\int_0^T\int_\GamN g\cdot v\,\mathrm{d}\sigma(x)\t + \int_{\OmEps}\varrho^\eps u^\eps_1\cdot v(0)\,\x.
 	\end{split}
 \end{equation}

Finally, we list the assumptions on the data ensuring well-posedness of the problem and allowing the homogenisation analysis.

\noindent\textbf{Assumptions on the data:}
\begin{enumerate}
[label = (A\arabic*)]
\item 
Let $\alpha,\beta\in\mathbb{R}$ with $0<\alpha<\beta$. Denoting by $M(\alpha,\beta,\mathcal{O})$ for an open set $\mathcal{O}\subset \mathbb{R}^3$ the set of all tensors $B = (b_{ijkh})_{1\leq i,j,k,h\leq 3}$ such that 
\begin{enumerate}[label = (\roman*)]
	\item{$b_{ijkh}\in L^{\infty}\left(\mathcal{O}\right)\text{ for all } i,j,k,h\in\{1,2,3\}$,}
	\item{$b_{ijkh}=b_{jikh}=b_{khij}\text{ for all } i,j,k,h\in\{1,2,3\}$,}
	\item{$\alpha\vert m\vert^2\leq Bm:m$ for all symmetric matrices $m$,}
	\item{$\vert B(x)m\vert\leq\beta\vert m\vert$ for all matrices $m$}
\end{enumerate}
a.e.~in $\mathcal{O}$, let $A^\pm\in M(\alpha,\beta,\Omega^\pm)$ and  $A^\lay_{\eps}\in M(\alpha,\beta,\OmEps^\lay)$. Moreover, we assume $A^\lay \in C^0(\overline{\omega},L^{\infty}(Y_0))^{3\times 3 \times 3 \times 3}$ and $Y'$-periodic with respect to the second variable.

\item We assume $\varrho^{\pm} \in L^{\infty}(\Omega^{\pm})$ and $\varrho^\lay \in C^0(\overline{\omega},L^{\infty}(Y_0))$ such that $\varrho^\lay$ is $Y'$-periodic with respect to the second variable. Further, we assume
\begin{align*}
0<\varrho_0<\varrho^\pm(x)<\varrho_1 \qquad \text{ for a.e. } x\in\Omega^\pm
\end{align*}
and
\begin{align*}
0<\varrho_0<\varrho^\lay(x',y)<\varrho_1 \qquad \text{ for a.e. } (x',y) \in \omega \times Y_0,
\end{align*}
for some $\varrho_0,\varrho_1\in\R_+$.

\item For the boundary force $g$, we assume $g\in H^1(S,L^2(\GamN))^3$ and $g =  0$ on $\left(-\frac{\delta}{2},\frac{\delta}{2}\right)\times \partial \omega$ for some $\delta >\eps$.

\item The body forces fulfil  $f^\pm\in L^2(S\times\Omega^\pm)^3$ and  $f^\lay_\eps\in L^2(S\times\OmEps^\lay)^3$ such that
\begin{equation}\label{est:fM}
    \frac{1}{\sqrt{\eps}}\norm{f^\lay_\eps}_{L^2(S\times\OmEps^\lay)^3}\leq C
\end{equation}
for a constant $C$ independent of $\eps$. Further, we assume that there exists $f^\lay \in L^2(S \times \omega)^3$ such that $\TEps(f_{\eps}^\lay) \weak f^\lay$ in $L^2(S \times \omega \times Y_0)^3$, where $\TEps $ is the unfolding operator defined in Definition \ref{def:unfolding_operator}. 

\item For the initial condition $u^{\eps}_0$, we assume $u^\pm_0\in H^1_{\GamD\cap\partial\Omega^\pm}(\Omega^\pm)$ and $u^\lay_{0,\eps}\in H^1_{\GamD\cap\partial\OmEps^\lay}(\OmEps^\lay)$ with 
\begin{equation}\label{est:uM0}
    \frac{1}{\sqrt{\eps}}\norm{u_{0,\eps}^\lay}_{L^2(\OmEps^\lay)^3}+\eps^{\frac{\gamma}{2}}\norm{e(u_{0,\eps}^\lay)}_{L^2(\OmEps^\lay)^{3\times 3}}\leq C 
\end{equation}
for a constant $C$ independent of $\eps$, and there exists $u_0^\lay \in L^2(\omega, \widetilde{H}^1_{\#}(Y_0))^3$ (for the definition of $\widetilde{H}^1_{\#}(Y_0)$, see \eqref{def:H1_tilde_per_Y0}) such that $\TEps(u_{0,\eps}^\lay) \weak u^\lay_0$ in $L^2(\omega \times Y_0)^3$.

\item The initial condition $u_1^{\eps}$ fulfils  $u^\pm_1\in L^2(\Omega^\pm)^3$ and $u^\lay_{1,\eps}\in L^2(\OmEps^\lay)^3$ with
\begin{equation}\label{est:uM1}
    \frac{1}{\sqrt{\eps}}\norm{u^\lay_{1,\eps}}_{L^2(\OmEps^\lay)^3}\leq C
\end{equation}
for a constant $C$ independent of $\eps$. Further, we assume that there exists $u_1^\lay \in L^2(\omega \times Y_0)^3$ such that $\TEps(u_{1,\eps}^\lay) \weak u_1^\lay$ in $L^2(\omega \times Y_0)^3$.

\item\label{ass:zero_Dirichlet_BC_gamma-1-3} In case $\gamma = -1 $ or $\gamma =-3$, we additionally assume homogeneous Dirichlet boundary conditions on the lateral part of the thin layer:
\begin{align*}
    \ueps = 0 \qquad\mbox{ on } S\times \left( \left(-\frac{\eps}{2},\frac{\eps}{2}\right)  \times \partial \omega \right).
\end{align*}
\end{enumerate}

\begin{remark}\label{rem:Assumptions} \ 
\begin{enumerate}[label = (\roman*)]
\item It is implicit in the above that $\eps$ is not only assumed much smaller than the characteristic macroscopic length, but also that $\eps$ is much smaller than any wavelength arising in the problem.

\item The assumption $g = 0$ at the lateral boundary of the thin layer is for technical reasons to avoid additional boundary terms in the thin layer. Under suitable assumptions, we expect that the inhomogeneous case can be treated along the same lines, but more details would have to be worked out.

\item We stress the crucial difference in the boundary condition in assumption \ref{ass:zero_Dirichlet_BC_gamma-1-3} compared to the case $\gamma  =1$. From a mathematical point of view, the reason is given by the different Korn inequalities used for the derivation of the a priori estimates, see Section \ref{sec:existence_apriori_estimates}.

\item It is straightforward also to consider $f^\lay \in L^2(S\times \omega \times Y_0)^3$. In fact, it is relevant to have $f^\lay$ independent of $y$ only for the representation of $u^\lay$ (see Proposition \ref{prop:decomposition_u^M}) to obtain a cell problem for the force term independent on $f^\lay$. 

\item For $\gamma = -1,-3$, we have from the estimate $\eqref{est:uM0}$ that $u_0^\lay \in L^2(\omega)^3$ is independent of $y$, and for $\gamma = -3$ we even have $[u_0^\lay]_2 = [u_0^\lay]_3= 0$. For more details, see the compactness results in Section \ref{sec:compactness}.
\end{enumerate}

\end{remark}

\section{Main results and the macroscopic model}\label{sec:mainresults}

In this section, we formulate the macroscopic models obtained in the limit $\eps \to 0$ and the main results of the paper, which are proven in the subsequent sections. We have to distinguish between the three cases $\gamma = 1,-1,-3$. Of course, the main difference lies in the interface conditions across $\omega$. More precisely, in all cases we have the existence of the macroscopic bulk displacements $u^{\pm}$ such that the convergence 
\begin{align*}
    \chi_{\OmEps^{\pm} } \ueps \weak u^{\pm} \qquad\mbox{weakly in } L^2(S \times \Omega^{\pm})^3
\end{align*}
holds and we refer to Proposition \ref{prop:compactness_bulk_domains} for the precise compactness results for the sequence $\ueps$ in the bulk domains. The displacements $u^{\pm}$ solve again the wave equations with the same coefficients as in the microscopic bulk domains. 
The critical question is the interface conditions between $u^+$ and $u^-$ across $\omega$. We will see that we obtain continuity in the cases $\gamma = -1$ and $\gamma = -3$, but in the case $\gamma = 1$ the thin layer acts macroscopically as a thick interface and we have a jump between the displacements $u^+$ and $u^-$. The interface conditions on $\omega$ depend on the limit functions for the displacement $\ueps$ in the perforated layer $\OmEps^\lay$, which strongly depend on $\eps$. Here, we have to take into account the effect of homogenisation (related to the periodic perforations) and, simultaneously, of the dimension reduction (related to the vanishing thickness). To obtain suitable compactness results, we use the unfolding operator in thin domains, which gives a characterisation for the two-scale convergence in thin heterogeneous layers (see \cite{Neuss:2007}). We refer to Section \ref{sec:compactness} for the precise compactness results.  Now, let us formulate the macroscopic models for the three different cases:

\subsection{The case $\gamma = 1$}\label{sec:mainresults_gam1}

Different to the other cases, the limit displacement $u^\lay$ in the thin layer also depends on the microscopic variable $y$ in the case $\gamma =1$. More precisely, we have that 
$u^\lay \in L^{\infty}(S;L^2(\omega,\widetilde{H}_{\#}^1(Y_0)))^3 $ (see $\eqref{def:H1_tilde_per_Y0}$ for the definition of the space $\widetilde{H}_{\#}^1(Y_0)$) and the convergence
\begin{align*}
    \TEps(\ueps) \weak u^\lay \qquad\mbox{ weakly in } L^2(S \times \omega \times Y_0)^3
\end{align*}
holds.
For the definition of the unfolding operator, we refer to Definition \ref{def:unfolding_operator}.
On the interface $\omega$, we have the interface condition 
\begin{align*}
u^{\pm}|_{\omega} = u^\lay \qquad\mbox{on } S\times \omega \times S^{\pm};
\end{align*}
in particular, we expect a jump of the bulk displacements $u^+$ and $u^-$ across $\omega$. We emphasise that the limit function $u^\lay$ is constant with respect to $y$ on $S^{\pm}$.

The triple $(u^+,u^\lay,u^-)$ is the unique weak solution  to the problem
\begin{subequations}\label{eq:macro_model_gamma1_strong}
\begin{align}
\label{eq:two_scale_model_gamma1_PDE_pm}
\partial_t(\varrho^\pm\partial_t u^\pm)-\nabla\cdot(A^\pm e(u^\pm))&=\varrho^\pm f^\pm &&\text{ in }S\times\Omega^\pm,
\\
\label{eq:two_scale_model_gamma1_PDE_M}
\partial_t(\varrho^\lay\partial_t u^\lay)-\nabla\cdot(A^\lay e_y(u^\lay))&=\varrho^\lay f^\lay &&\text{ in }S\times \omega \times Y_0,
\end{align}
together with the boundary conditions
\begin{align}
u^\pm &= 0 &\text{ on }S\times(\GamD\cap\partial\Omega^\pm),\\
		(A^\pm e(u^\pm))\nu &= g &\text{ on }S\times(\GamN\cap\partial\Omega^\pm),
\end{align}
the interface conditions 
\begin{align}
\label{eq:macro_model_gamma1_interface_displacement}
u^\lay &= u^{\pm} &\mbox{ on }& S \times \omega \times S_Y^{\pm},
\\
\label{eq:macro_model_gamma1_interface_stress}
\int_{S_Y^{\pm}} A^\lay e_y(u^\lay) \Sy \nu &= A^{\pm} e(u^{\pm}) \nu &\mbox{ on }& S \times \omega,
\end{align}
as well as the initial conditions
\begin{align}
u^{\pm}(0) &= u_0^{\pm} &\mbox{ in }& \Omega^{\pm},
\\
\partial_t u^{\pm}(0) &= u_1^{\pm} &\mbox{ in }& \Omega^{\pm},
\\
u^\lay (0) &= u_0^\lay &\mbox{ in }& \omega \times Y_0,
\\
\partial_t u^\lay(0) &= u_1^\lay &\mbox{ in  }& \omega \times Y_0.
\end{align}
\end{subequations}
We see that, on the macroscopic scale, the thin layer acts as a thick interface between $\Omega^+$ and $\Omega^-$, the behaviour of which is given by the equation for the displacement $u^\lay$. This interface encapsulates the heterogeneity of the thin layer in the micromodel.  Let us formulate the definition of a weak solution of this system.  Defining
\begin{multline}\label{def:spaceH_1}
\spaceH_1:= \bigg\{(v^+,v^\lay,v^-) \in H^1_{\GamD\cap\partial\Omega^+}(\Omega^+)^3  \times  L^2(\omega,\widetilde{H}^1_{\#}(Y_0))^3 \times H^1_{\GamD\cap\partial\Omega^-}(\Omega^-)^3  \, | \, \\ v^{\pm} = v^\lay \mbox{ on } \omega \times S_Y^{\pm}  \bigg\},
\end{multline}
we call $(u^+,u^\lay,u^-)$ a weak solution of the macroscopic problem $\eqref{eq:macro_model_gamma1_strong}$ if $(u^+,u^\lay,u^-)\in L^2(S,\spaceH_1)$,
$
 \partial_t (u^+,u^\lay,u^-) \in L^2(S;L^2(\Omega^+)^3\times L^2(\omega \times Y_0)^3 \times L^2(\Omega^-)^3) 
$ 
and $(u^+,u^\lay,u^-)(0) = (u_0^+,u_0^\lay,u_0^-)$ as well as
\begin{align}
\begin{aligned}\label{eq:macro_model_two_scale_gamma_1}
	-&\sum_{\pm}\int_0^T\int_{\Omega^\pm} \left\{\varrho^\pm\partial_t u^\pm\cdot\partial_t v^\pm\,\x\t -\int_0^T\int_{\Omega^\pm} A^\pm e(u^\pm):e(v^\pm)\,\x\t \right\}\\
    &-\int_0^T\int_{\omega\times Y_0} \varrho^\lay \partial_t u^\lay\cdot\partial_t v^\lay\,\x'\y\t +\int_0^T\int_{\omega\times Y_0} A^\lay e_y(u^\lay): e_y(v^\lay)\,\x'\y\t\\
	=& \sum_{\pm}\int_0^T\int_{\Omega^\pm} \varrho^\pm f^\pm\cdot v^\pm\,\x\t +\int_0^T\int_{\GamN\cap\partial\Omega^\pm} g\cdot v^\pm\,\mathrm{d}\sigma(x)\t +\int_{\Omega^\pm}\varrho^\pm u^\pm_1\cdot v^\pm(0,x)\,\x\\
    &+\int_0^T\int_{\omega\times Y_0} \varrho^\lay f^\lay\cdot v^\lay\,\x'\y\t +\int_{\omega\times Y_0} \varrho^\lay u^\lay_1\cdot v^\lay(0,x',y)\,\x'\y
\end{aligned}
\end{align}
for all $v:=(v^+,v^\lay,v^-) \in L^2(S,\spaceH_1)$ with $v(T) = 0$,
$\partial_t v^\pm\in L^2(S\times\Omega^\pm)^3$
and $\partial_tv^\lay\in L^2(S\times\omega\times Y_0)^3$.

Let us give some comments on the regularity of the solution and also give a stronger weak formulation.
The equation $\eqref{eq:two_scale_model_gamma1_PDE_pm}$ is valid in the space $L^2(S,H^{-1}(\Omega^{\pm}))^3$. This can be seen by choosing in equation $\eqref{eq:macro_model_two_scale_gamma_1}$ test functions of the form $(\phi^+,0,0) \in C_0^{\infty}(S,\spaceH_1)$ for $\phi^+ \in C_0^{\infty}(S,H^1_0(\Omega^+))^3$, and similarly with $\phi^-$. In particular, we get $ \partial_t(\varrho^{\pm}\partial_t u^{\pm}) \in L^2(S,H^{-1}(\Omega^{\pm}))^3$ and, therefore, $(\varrho^{\pm} \partial_tu^{\pm})(0) \in H^{-1}(\Omega^{\pm})^3$  and equal to $\varrho^{\pm} u_1^{\pm}$. In a slight abuse of notation, we simply write $\partial_t u^{\pm}(0) = u_1^{\pm}$. However, under the condition $\varrho^{\pm} \in W^{1,\infty}(\Omega^{\pm})$, this equation is also valid in $L^2(\Omega^{\pm})^3.$ In a similar way, we can proceed for the function $u^\lay$ by choosing $(0,v^\lay,0) \in C_0^{\infty}(S,\spaceH_1)$ such that $v^\lay = 0$ on $S_Y^{\pm}$, which implies $\partial_t (\varrho^\lay \partial_t u^\lay) \in L^2(S,L^2(\omega,H^{-1}(Y_0)))^3$. 
Further, since all the terms in $\eqref{eq:macro_model_two_scale_gamma_1}$ which include no time derivatives define linear functionals on $\spaceH_1$, we also obtain $\partial_t (\varrho^+  \partial_t u^+, \varrho^\lay  \partial_t u^\lay, \varrho^- \partial_t u^-) \in L^2(S,\spaceH_1')^3$.
We summarise these observations in the following remark:
\begin{remark}
The solution to \eqref{eq:macro_model_two_scale_gamma_1} satisfies
\begin{align*}
      \partial_t(\varrho^{\pm}\partial_t u^{\pm}) \in L^2(S,H^{-1}(\Omega^{\pm}))^3 \quad \mbox{ and } \quad \partial_t (\varrho^\lay \partial_t u^\lay) \in L^2(S,L^2(\omega,H^{-1}(Y_0)))^3
\end{align*}
as well as $\varrho^{\pm} \partial_t u^{\pm}(0) = \varrho^{\pm} u_1^{\pm}$ and $\varrho^\lay \partial_t u^\lay(0) = \varrho^\lay u^\lay_1$. Additionally, we have
\begin{align*}
    \partial_t (\varrho^+  \partial_t u^+, \varrho^\lay  \partial_t u^\lay, \varrho^- \partial_t u^-) \in L^2(S,\spaceH_1')^3.
\end{align*}
Therefore, we can write equation $\eqref{eq:macro_model_two_scale_gamma_1}$ in the following equivalent form: For all $v \in \spaceH$, it holds almost everywhere in $S$ that 
\begin{align}
\begin{aligned}\label{eq:two_scale_macro_gamma1_aux}
\big\langle &\partial_t (\varrho^+  \partial_t u^+, \varrho^\lay  \partial_t u^\lay, \varrho^- \partial_t u^-), v \big\rangle_{\spaceH_1',\spaceH} +\sum_{\pm}\int_{\Omega^\pm} A^\pm e(u^\pm):e(v^\pm)\,\x\\
    &+\int_{\omega\times Y_0} A^\lay e_y(u^\lay): e_y(v^\lay)\,\x'\y\\
	=& \sum_{\pm}\int_{\Omega^\pm} \varrho^\pm f^\pm\cdot v^\pm\,\x + \int_{\GamN\cap\partial\Omega^\pm} g\cdot v^\pm\,\mathrm{d}\sigma(x)  +\int_{\omega\times Y_0} \varrho^\lay(y) f^\lay\cdot v^\lay\,\x'\y.
\end{aligned}
\end{align}
\end{remark}
In general, we cannot expect the macroscopic displacements $u^+$ and $u^-$ to be continuous across $\omega$, i.e.~we have $u^+ \neq u^-$ on $\omega$, and similarly for the normal stress. Similarly as in \cite{Neuss:2007}, we formulate a condition for the jump of the displacement and the normal stress. More precisely, the jump of the displacement  on the interface $\omega$ can be computed by
\begin{align*}
 u^+_k(t,0,x')-u^-_k(t,0,x') = \int_{Y_0} A^\lay e_y(\eta^{(k)}) : e_y(u^\lay) \,\y 
\end{align*}  
    for $k\in\{1,2,3\}$ and,  for the stress, we have in the distributional sense
\begin{align*}
 (A^{+}e(u^{+})n - A^{-}e(u^{-})n)(t,0,x') = \int_{Y_0} (\varrho^\lay f^\lay - \partial_t(\varrho^\lay\partial_t u^\lay))\,\y,
\end{align*}
    where $n$ is normal from $\Omega^+$ to $\Omega^-$, i.e.~$n=(-1,0,0)$, and the functions $\eta^{(k)}$ are solutions to the cell problems \eqref{cell_problem_eta^k_jump}. 

Finally, in applications, the macroscopic displacements $u^+$ and $u^-$ are typically of particular interest. However, for the calculation of these quantities we have to solve the full problem for $(u^+,u^\lay,u^-)$ coupled via the interface conditions $\eqref{eq:macro_model_gamma1_interface_displacement}$ and $\eqref{eq:macro_model_gamma1_interface_stress}$. In what follows, we formulate a version of the macroscopic model for the displacements $u^+$ and $u^-$ with effective interface conditions across $\omega$ not dependent (explicitly) on $u^\lay$. For this, we make the additional assumptions $u^\lay_0 = u^\lay_1 = 0$ (only to simplify some calculations) and $f^\lay = 0$, and further assume slightly higher regularity for $\partial_t u^{\pm}|_{\omega}$, see Section \ref{sec:effective_interface_conditions} for more details.  Following the ideas in \cite{gahn2025effective} and \cite{gahn2025b_effective}, we first derive a representation for $u^\lay$ via suitable cell problems (see Proposition \ref{prop:decomposition_u^M})
, use this representation in the weak formulation $\eqref{eq:two_scale_macro_gamma1_aux}$ and choose specific forms of the test function $v^\lay$ to obtain an equation independent of $u^\lay$ including effective coefficients depending on the cell problems. Finally, we see that $(u^+,u^-)$ is a weak solution to the problem 
\begin{align}
\begin{aligned}\label{eq:macro_model_gamma1_effective}
\partial_t (\varrho^{\pm} \partial_t u^{\pm}) - \nabla \cdot (A^{\pm} e(u^{\pm})) &= \varrho^{\pm} f^{\pm} &\mbox{ in }& S\times \Omega^{\pm},
\\
u^{\pm} &= 0 &\mbox{ on }& S \times \Gamma_D\cap\partial \Omega^{\pm},
\\
A^{\pm}e(u^{\pm})\nu &= g &\mbox{ on }& S\times \Gamma_N,
\\
A^{\pm}e(u^{\pm})\nu &=  H^{\pm}(t,x',u^+,u^-) &\mbox{ on }& S \times \omega,
\\
u^{\pm}(0) = \partial_t u^{\pm}(0) &= 0 &\mbox{ in }& \Omega^{\pm}.
\end{aligned}
\end{align}
Here, the right-hand side in the effective interfacial stress is given by
\begin{align}\label{eq:Hbeta}
 H^{\beta}(t,x',u^+,u^-) = \sum_{\alpha \in\{\pm\}}  \int_0^t G^{\alpha\beta}(t-s,t,x')\partial_t u^{\alpha}(s,0,x') + F^{\alpha\beta}(t-s,t,x') \partial_{tt}u^{\alpha}(s,0,x') \s  
\end{align}
with effective coefficients 
\begin{align*}
    G_{ji}^{\alpha\beta} (\tau,t,x') &:= \int_{S_Y^{\beta}} A^\lay e_y(\chi_i^{\alpha})(\tau,x',y) \nu \cdot e_j \,\mathrm{d}\sigma(y),
\\
F_{ji}^{\alpha\beta}(\tau,t,x') &:= \int_{S_Y^{\beta}} A^\lay e_y(\eta_i^{\alpha})(\tau,x',y) \nu \cdot e_j \,\mathrm{d}\sigma(y),
\end{align*}
$i,j=1,2,3$ and $\alpha,\beta \in \{\pm\}$, 
where the cell solutions $\chi_i^{\pm}$ and $\eta_i^{\pm}$ for $i=1,2,3$ are given by  $\eqref{eq:cell_problem_chi^pm}$ and $\eqref{eq:cell_problem_eta^pm}$, respectively. Another formulation, using less regularity for the cell solutions, for $G^{\alpha\beta}$ and $F^{\alpha\beta}$ is stated in $\eqref{def:effective_G}$ and $\eqref{def:effecitve_F}$.
The effective interfacial stress  \eqref{eq:Hbeta} constitutes a memory term taking into account the delay of force transmission across the interface.

\subsection{The case $\gamma = -1$}\label{sec:mainresults_gam-1}

For $\gamma = -1$, we obtain a macroscopic displacement $u^\lay$ on the interface $\omega$ in the limit, such that $u^\lay \in L^2(S;H_0^1(\omega)^2 \times L^2(\omega)) $ and  
\begin{align*}
    \TEps(\ueps) \weak u^\lay \qquad\mbox{ weakly in } L^2(S \times \omega \times Y_0)^3.
\end{align*}
In particular, the limit function in the interface is independent of the microscopic variable $y$ and describes a macroscopic quantity.
Further, we have the continuity condition for the bulk displacements
\begin{align*}
    u^+ = u^\lay = u^- \qquad \mbox{on } S \times \omega.
\end{align*}
Now, the critical aspect is the stress condition for $u^+$ and $u^-$ across $\omega$. Let us formulate the full macroscopic model: The triple $(u^+,u^\lay,u^-)$ is the unique weak solution to the problem
\begin{subequations}\label{Macro_Model_gamma_-1}
\begin{align}
\partial_t (\varrho^{\pm} \partial_t u^{\pm}) - \nabla\cdot (A^{\pm} e(u^{\pm})) &= \varrho^{\pm} f^{\pm} &\mbox{ in }& S \times \Omega^{\pm},
\\
(A^{\pm} e(u^{\pm}))\nu &= g &\mbox{ on }& S \times \Gamma_N,
\\
u^{\pm} &= 0 &\mbox{ on }& S \times \Gamma_D,
\\
u^{\pm} &= u^\lay &\mbox{ on }& S \times \omega,
\\
-\llbracket A^{\pm} e(u^{\pm})\nu^{\pm}\rrbracket &= \partial_t( \bar{\varrho}^\lay \partial_t u^\lay) &\mbox{ }& \notag
\\ \label{Macro_Model_gamma_-1_interf}
&\quad - \nabla_{x'} \cdot (A^{\ast} e_{x'}(\hat{u}^\lay)) - \bar{\varrho}^\lay f^\lay &\mbox{ on }& S\times \omega,
\\
u^\lay &= 0 &\mbox{ on }& S \times \partial \omega,
\\
u^{\pm}(0) &= u_0^\lay &\mbox{ in }& \Omega^{\pm},
\\
\partial_t u^{\pm}(0) &= u_1^{\pm} &\mbox{ in }& \Omega^{\pm},
\\
u^\lay(0)  &= u_0^\lay &\mbox{ in }& \omega,
\\
\partial_t u^\lay (0) &=  u_1^\lay  &\mbox{ in }& \omega,
\end{align}
\end{subequations}
with $\hat{u}^\lay = ([u^\lay]_2,[u^\lay]_3)$, where $[\,\cdot\,]_j$ denotes the $j$th component of the vector, and the homogenised elasticity tensor $A^{\ast}$ defined in $\eqref{def:effective_elasticity_tensor}$. Furthermore, the jump of the normal stress is defined by
\begin{align*}
    \llbracket A^{\pm} e(u^{\pm})\nu^{\pm} \rrbracket  := A^+ e(u^+) \nu^+  + A^- e(u^-) \nu^- ,
\end{align*}
where $\nu^{\pm}$ denotes the outer unit normal with respect to $\Omega^{\pm}$, and $\bar{\varrho}^\lay$ is the mean value given by 
\begin{align*}
    \bar{\varrho}^\lay:= \int_{Y_0} \varrho^\lay \y.
\end{align*}
We emphasise that the spatial differential operator on the right-hand side of the jump condition for the normal stress (equation \eqref{Macro_Model_gamma_-1_interf}) acts only on the second and third components of $u^\lay$.

Let us give the definition of a weak solution of the macromodel. Introducing the solution space
\begin{equation}\label{def:spaceH_-1}
\spaceH_{-1}:= \left\{\phi \in H_{\Gamma_D}^1(\Omega)^3 \, : \, (\phi_1,\phi_2)|_{\omega} \in H_0^1(\omega)^2 \right\},
\end{equation}
we call the triple $(u^+,u^\lay,u^-)$ a weak solution to the macroscopic model $\eqref{Macro_Model_gamma_-1}$ if 
\begin{align*}
    u^{\pm} &\in L^2(S;H^1_{\Gamma_D}(\Omega^{\pm}))^3 \cap H^1(S; L^2(\Omega^{\pm}))^3,
    \\
    u^\lay &\in L^2(S;H_0^1(\omega)^2 \times L^2(\omega)) \cap H^1(S; L^2(\omega))^3
\end{align*}
 with  $(u^+,u^\lay,u^-)(0) = (u^+_0,u^\lay_0,u^-_0)$ and   such that for all $v \in L^2(S;\spaceH_{-1})$ with $\partial_t v \in L^2(S;L^2(\Omega))^3$ and $v(T) = 0$ it holds that
 \begin{align*}
    -&\sum_{\pm}\int_0^T\int_{\Omega^{\pm}} \varrho^{\pm}\partial_t u^{\pm}\cdot\partial_t v^{\pm}\,\x\t
    -\int_0^T\int_{\omega\times Y_0} \varrho^\lay \partial_t u^\lay\cdot\partial_t v^\lay\,\x'\y\t 
    \\
    &+\sum_{\pm}\int_0^T\int_{\Omega^{\pm}} A^{\pm}e(u^{\pm}):e(v^{\pm})\,\x\t +\int_0^T\int_{\omega} A^{\ast} e_{x'} (\hat{u}) : e_{x'}(\hat{v}^\lay) \x'\t\\
	=& \sum_{\pm}\int_0^T\int_{\Omega^{\pm}} \varrho^{\pm}f^{\pm}\cdot v^{\pm}\,\x\t 
    +\int_0^T\int_{\omega\times Y_0} \varrho^\lay(y) f^\lay\cdot v^\lay\,\x'\y\t\\
    &+ \sum_{\pm}\int_0^T\int_{\GamN\cap\partial\Omega^{\pm}} g\cdot v^{\pm}\,\mathrm{d}\sigma(x)\t 
    +\sum_{\pm}\int_{\Omega^{\pm}}\varrho^{\pm}u^{\pm}_1\cdot v^{\pm}(0,x)\,\x\\
    &+\int_{\omega\times Y_0} \varrho^\lay u^\lay_1(x',y)\cdot v(0,x')\,\x'\y,
 \end{align*}
where $\hat{u}^\lay=([u^\lay]_2,[u^\lay]_3)$, $v^\lay:= v^{\pm}|_{\omega}$ and  $\hat{v}^\lay = ([v^\lay]_2,[v^\lay]_3).$ Note that the limit function $u := u^{\pm}$ in $\Omega^{\pm}$ is an element of $L^2(S;\spaceH_{-1}).$

\subsection{The case $\gamma  = -3$}\label{sec:mainresults_gam-3}

For an even stiffer layer, the behaviour in the bulk domains is comparable to the previous case $\gamma = -1$; in particular, the continuity of the bulk displacements $u^+$ and $u^-$ across $\omega$ also holds. However, the structure of the limit function in the interface changes. In fact, for $\gamma  =-3$, the interface limit function behaves like a Kirchhoff--Love displacement and we have the existence of limit functions $[u^\lay]_1 \in L^2(S;H_0^2(\omega))$ and $\hat{u}^\lay \in L^2(S;H_0^1(\omega))^2$ such that 
\begin{align*}
  \TEps(\ueps)_1 &\weak [u^\lay]_1 &\mbox{ weakly in } L^2(S\times \omega \times  Y_0),
  \\
  \frac{1}{\eps}\TEps(\ueps)_{\alpha} &\weak [\hat{u}^\lay]_{\alpha} &\mbox{ weakly in } L^2(S\times \omega \times Y_0), 
\end{align*}
$\alpha = 2,3$.
Here, we see that $[u^\lay]_1$ is the zeroth-order approximation (in the two-scale sense) of the first component of $\ueps|_{\OmEps^\lay}$, and $\hat{u}^\lay$ is the first-order approximation of $([\ueps]_2,[\ueps]_3)|_{\OmEps}$ (the zeroth-order approximation vanishes). We use the notation $[u^\lay]_1$ to allude to the idea of viewing $u^\lay$ as a three-dimensional vector field with second and third component equal to zero.

As already mentioned above, we have the continuity of the bulk displacements across $\omega$. More precisely, we have
\begin{align*}
    u^{\pm}|_{\omega} = [u^\lay]_1 e_1 \qquad\mbox{on } S\times \omega
\end{align*}
with the unit vector $e_1$. In particular, the displacement in the tangential direction is zero.

Next, we formulate the macroscopic model.  The limit function $(u^+,[u^\lay]_1,\hat{u}^\lay,u^-)$ is the solution to
\begin{subequations}\label{Macro_Model_gamma_-3}
\begin{align}
\partial_t (\varrho^{\pm} \partial_t u^{\pm}) - \nabla\cdot (A^{\pm} e(u^{\pm})) &= \varrho^{\pm} f^{\pm} &\mbox{ in }& S \times \Omega^{\pm},
\\
(A^{\pm} e(u^{\pm}))\nu &= g &\mbox{ on }& S \times \Gamma_N,
\\
u^{\pm} &= 0 &\mbox{ on }& S \times \Gamma_D,
\\
u^{\pm} &= [u^\lay]_1 e_1 &\mbox{ on }& S \times \omega,
\\
-\llbracket A^{\pm} e(u^{\pm})\nu^{\pm} \cdot \nu^{\pm}\rrbracket &= \partial_t( \bar{\varrho}^\lay \partial_t [u^\lay]_1) - \bar{\varrho}^\lay f^\lay_1  &\mbox{ }& \notag
\\
&\quad + \nabla^2_{x'} : (b^{\ast} e_{x'}(\hat{u}^\lay) 
\\
&\quad + c^{\ast} \nabla_{x'}^2 [u^\lay]_1) &\mbox{ on }& S\times \omega, \notag
\\
-\nabla_{x'} \cdot (a^{\ast} e_{x'}(\hat{u}^\lay) + b^{\ast} \nabla_{x'}^2 [u^\lay]_1 ) &= 0 &\mbox{ in }& S\times \omega,
\\
[u^\lay]_1 &= 0 &\mbox{ on }& S \times \partial \omega,
\\
\nabla_{x'} [u^\lay]_1 &= 0 &\mbox{ on }& S\times \partial \omega,
\\
\hat{u}^\lay &= 0 &\mbox{ on }& S\times \partial \omega,
\\
u^{\pm}(0) &= u_0^{\pm} &\mbox{ in }& \Omega^{\pm},
\\
\partial_t u^{\pm}(0) &= u_1^{\pm} &\mbox{ in }& \Omega^{\pm},
\\
[u^\lay]_1(0)  &= [u_0^\lay]_1 &\mbox{ in }& \omega,
\end{align}
\end{subequations}
where the effective coefficients are given in \eqref{def:effective_coefficients_plate}.
Compared to the case $\gamma = -1$, we see that we only get a jump condition for the normal component of the normal stress. Nevertheless, we still have enough interface conditions to ensure the problem is well-defined since we have the vanishing Dirichlet condition on $\omega$ for the second and third component of the displacement $u^{\pm}$. 

In order to state the weak formulation of this problem, we introduce the corresponding solution space
\begin{equation}\label{def:spaceH_-3}
\spaceH_{-3}:= \left\{ \phi \in H^1(\Omega)^3 \, : \, \phi = 0 \mbox{ on } \Gamma_D, \, \, \phi|_{\omega} = (\phi_3,0,0)|_{\omega} \in H_0^2(\omega)^3 \right\}.
\end{equation}
Then, $(u^+,[u^\lay]_1,\hat{u}^\lay,u^-)$ is a weak solution of the macroscopic problem $\eqref{Macro_Model_gamma_-3}$ if 
\begin{align*}
    u^{\pm} &\in L^2(S;H_{\Gamma_D}^1(\Omega^{\pm}))^3 \cap H^1(S;L^2(\Omega^{\pm}))^3,
    \\
    [u^\lay]_1 &\in L^2(S;H_0^2(\omega))\cap H^1(S;L^2(\omega)),
    \\
    \hat{u}^\lay &\in L^2(S;H_0^1(\omega))^2,
\end{align*}
with $u^{\pm}(0) = u_0^{\pm}$ and $[u^\lay]_1 (0) = [u_0^\lay]_1$, and for all $V \in L^2(S;\spaceH_{-3})$ such that $\partial_t V \in L^2(S \times \Omega)^3$ and $\bar{U} := (U_2,U_3) \in L^2(S;H_0^1(\omega))^2$  with $\partial_t \bar{U} \in  L^2(S;L^2(\omega))^2$ and $V(T) = 0$ and $\bar{U}(T) = 0$ there holds 
\begin{align*}
- \sum_{\pm}& \left\{ \int_0^T \int_{\Omega^{\pm}} \varrho^{\pm} \partial_t u^{\pm} \cdot \partial_t V \x \t  - \int_0^T \int_{\Omega^{\pm}} A^{\pm} e(u^{\pm}) : e(V) \x \t \right\}
\\
-& \int_0^T \int_{\omega} \int_{Y_0} \varrho^\lay \partial_t [u^\lay]_1 \partial_t V_1 \y\x'\t \\
+& \int_0^T \int_{\omega} a^{\ast} e_{x'}(\hat{u}^\lay)  : e_{x'}(\bar{U}) + b^{\ast} \nabla_{x'}^2 [u^\lay]_1 : e_{x'}(\bar{U}) + b^{\ast} e_{x'} (\hat{u}^\lay) : \nabla_{x'}^2 V_1 + c^{\ast} \nabla_{x'}^2 [u^\lay]_1 : \nabla_{x'}^2 V_1\x'\t
\\
&= \sum_{\pm} \left\{ \int_0^T \int_{\Omega^{\pm}} \varrho^{\pm} f^{\pm}   \cdot V \x \t  + \int_{\Omega^{\pm}} \varrho^{\pm} u_1^{\pm} \cdot V(0) \x + \int_0^T \int_{\Gamma_N} g \cdot V \mathrm{d}\sigma(x) \t\right\}
\\
&\hspace{3em} + \int_0^T \int_{\omega} \int_{Y_0} \varrho^\lay f_1^\lay V_1 \y \x' \t + \int_{\omega} \int_{Y_0} \varrho^\lay [u^\lay_1]_1 V_1 \y \x'.
\end{align*}

\section{Existence and uniform boundedness}\label{sec:existence_boundedness}
\label{sec:existence_apriori_estimates}

While for fixed $\eps$ the existence and uniqueness of a weak solution of the microscopic model in the sense of Definition \ref{def:weak_micro_solution} is standard, the crucial point is to establish a priori estimates uniform with respect to $\eps$. An essential ingredient is the Korn inequality, where the critical point is to know the explicit dependence of the Korn constant (which depends on the domain $\OmEps$) on the parameter $\eps$. Here, we have to distinguish between the cases $\gamma = 1$ and $\gamma = -1,-3$. 
For the case $\gamma  =1$, we use the following Korn-type inequality from \cite[Proposition 3.4]{Griso:2015}.

\begin{lemma}[Korn inequality I]\label{prop:estimate_H1_OmEps}
There exists a constant $C$ independent of $\eps$ such that for every $u\in\Hgam(\OmEps)^3$
\[ \norm{u}_{H^1(\OmEps)^3} \leq C\norm{e(u)}_{L^2(\OmEps)^{3\times 3}}. \]
Moreover, the following estimate holds:
\begin{equation}\label{ineq:estimate_H1_OmEps}
    \begin{split}
    \sum_{\pm} \norm{u}_{H^1(\OmEps^{\pm})}&+\frac{1}{\sqrt{\eps}}\norm{u}_{L^2(\OmEps^\lay)}+\sqrt{\eps}\norm{\nabla u}_{L^2(\OmEps^\lay)}\\
    &\leq C\left( \sum_{\pm} \norm{e(u)}_{L^2(\OmEps^{\pm})}+\sqrt{\eps}\norm{e(u)}_{L^2(\OmEps^\lay)} \right).
\end{split}
\end{equation}    
\end{lemma}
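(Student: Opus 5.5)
The first inequality is a uniform Korn inequality on the perforated domain $\OmEps$. I will derive it from the second estimate, together with the standard Korn inequality on the fixed bulk domains. So the main work is \eqref{ineq:estimate_H1_OmEps}, and I would prove it in three steps.

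\textbf{Step 1: bulk estimate.} One of $\Omega^\pm$, say $\Omega^+$ (or both), has a boundary part $\GamD\cap\partial\Omega^+$ of positive measure. A uniform Korn inequality on $\OmEps^+$ then gives $\norm{u}_{H^1(\OmEps^+)}\le C\norm{e(u)}_{L^2(\OmEps^+)}$. The constant is uniform in $\eps$ because $\OmEps^+$ is a shift of $(0,L-\eps/2)\times\omega$; one can rescale in $x_1$, or extend by reflection across $x_1=\eps/2$, to a fixed domain.

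\textbf{Step 2: the layer, via unfolding.} I would first prove a Korn inequality on the reference cell $Y_0$. Let $Z=Y_0\cup(\text{thin slabs attached at }S_Y^\pm)$, or more simply take $Y_0$ itself with the trace on $S_Y^+$. Since $Y_0$ is a connected Lipschitz domain and $\overline{Y_1}\subset Y$, we have for $w\in H^1(Y_0)^3$
\begin{align*}
\norm{w}_{H^1(Y_0)}\le C\left(\norm{e_y(w)}_{L^2(Y_0)}+\norm{w}_{L^2(S_Y^+)}\right).
\end{align*}
This follows by the usual contradiction argument: a rigid motion vanishing on a two-dimensional face is zero. Apply it cellwise to $w(y)=u(\eps k+\eps y)$. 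Then $\nabla_y w=\eps\nabla u$, and summing over $k\in\Lambda_\eps$ with the Jacobian $\eps^3$ gives
\begin{align*}
\eps^{-1}\norm{u}^2_{L^2(\OmEps^\lay)}+\eps\norm{\nabla u}^2_{L^2(\OmEps^\lay)}\le C\left(\eps\norm{e(u)}^2_{L^2(\OmEps^\lay)}+\norm{u}^2_{L^2(S_\eps^+)}\right).
\end{align*}

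\textbf{Step 3: close the estimate.} By continuity of traces on $S_\eps^+$, the trace term is bounded by the bulk term: $\norm{u}_{L^2(S_\eps^+)}\le C\norm{u}_{H^1(\OmEps^+)}$, again uniformly since $\OmEps^+$ is a shifted fixed domain. This gives \eqref{ineq:estimate_H1_OmEps} for $\OmEps^+$ and the layer.

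The bulk region $\OmEps^-$ needs separate treatment if $\GamD$ does not meet it. In that case I use the same cell Korn inequality with the trace on $S_Y^-$, now in reverse. The controlled layer gives $\norm{u}_{L^2(S_\eps^-)}\le C(\eps^{-1/2}\norm{u}_{L^2(\OmEps^\lay)}+\eps^{1/2}\norm{\nabla u}_{L^2(\OmEps^\lay)})$, by a scaled trace inequality on $Y_0$. Then Korn on $\OmEps^-$ with a trace term on $S_\eps^-$ yields $\norm{u}_{H^1(\OmEps^-)}\le C(\norm{e(u)}_{L^2(\OmEps^-)}+\norm{u}_{L^2(S_\eps^-)})$.

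\textbf{The first inequality.} Since $\sqrt\eps\le1$, the right-hand side of \eqref{ineq:estimate_H1_OmEps} is at most $C\norm{e(u)}_{L^2(\OmEps)}$. Also $\norm{u}_{L^2(\OmEps^\lay)}\le\sqrt\eps\cdot\eps^{-1/2}\norm{u}_{L^2(\OmEps^\lay)}$. For the gradient there is a mismatch: \eqref{ineq:estimate_H1_OmEps} only gives $\sqrt{\eps}\norm{\nabla u}_{L^2(\OmEps^\lay)}$, not $\norm{\nabla u}_{L^2(\OmEps^\lay)}$. To obtain $\norm{\nabla u}_{L^2(\OmEps^\lay)}\le C\norm{e(u)}_{L^2(\OmEps)}$, I would use an unscaled cellwise Korn estimate. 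On each cell, $\nabla u$ minus a constant skew matrix $R_k$ is controlled by $e(u)$ with an $\eps$-independent constant. The $R_k$ are then controlled via the neighbouring bulk strips of width $\eps$ adjacent to $S_\eps^\pm$, where $\nabla u$ is bounded by the bulk $H^1$ norm. This is exactly the periodic-unfolding Korn argument of Griso.

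\textbf{Main obstacle.} The delicate step is showing that the Korn and trace constants are uniform on $\OmEps^\pm$ and on the cells. In particular, controlling the skew parts $R_k$ in the unscaled gradient bound requires combining cell and bulk regions. I expect to rely on the decomposition techniques of \cite[Proposition 3.4]{Griso:2015}.
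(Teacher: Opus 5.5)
Your proposal is correct, but it differs from the paper, which gives no proof of its own and imports both inequalities from \cite[Proposition 3.4]{Griso:2015}. You instead give a self-contained derivation that uses only Korn-type inequalities on fixed reference sets. The ingredients are:
\begin{enumerate}
\item a Korn inequality on $Y_0$ with an $L^2$ trace term on $S_Y^{\pm}$, rescaled and summed over the layer cells (your bookkeeping $\eps^{-1}\|u\|^2+\eps\|\nabla u\|^2\le C(\eps\|e(u)\|^2+\|u\|^2_{L^2(S_\eps^+)})$ is right);
\item a uniform Korn and trace inequality in the bulk part that meets $\Gamma_D$, which closes the estimate;
\item propagation to the other bulk part through $S_\eps^{-}$ via the scaled trace inequality on $Y_0$.
\end{enumerate}

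Your route makes explicit which hypotheses are used:
\begin{enumerate}
\item $Y_0$ is connected and Lipschitz;
\item $\overline{Y_1}\subset Y$, so the faces $S_Y^{\pm}$ are entirely shared with the bulk;
\item $\Gamma_D$ meets $\partial\Omega^+$ or $\partial\Omega^-$ in positive measure, which is automatic since $\partial\Omega\subset\partial\Omega^+\cup\partial\Omega^-$.
\end{enumerate}
The citation gives brevity and hides the uniformity bookkeeping.

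Two steps should be made precise when you write it out.

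First, the unscaled gradient bound. The skew matrix must come from Korn's inequality on the enlarged reference set obtained by gluing the box $(\tfrac12,\tfrac32)\times Y'$ to $Y_0$ along $S_Y^+$, not from $Y_0$ alone. This gives $\|\nabla u\|_{L^2(\eps(k+Y_0))}\le C\big(\|e(u)\|_{L^2(\text{enlarged cell})}+\|\nabla u\|_{L^2(\text{adjacent bulk box})}\big)$. Summing over the disjoint enlarged cells, together with your bulk estimate, yields $\|\nabla u\|_{L^2(\OmEps^\lay)}\le C\|e(u)\|_{L^2(\OmEps)}$.

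Second, uniformity in the bulk. Reflection is unnecessary and awkward for symmetric gradients. It is cleaner to combine Korn with Dirichlet data on the fixed subdomain $(\delta_0,L)\times\omega$ and Korn's second inequality on $\OmEps^+$. The latter can be transported from a fixed box by an affine map $\Phi(s)=Fs+b$ via $\tilde u=F^{T}u\circ\Phi$. This pullback maps $e(u)$ to $F^{T}e(u)F$ and rigid motions to rigid motions, so the constants stay uniform in $\eps$.
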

This allows us to control the $H^1$-norm of the displacement (in the bulk domains and the thin layer) by the symmetrised gradient in the bulk domains and a "small" part (scaled with some order of $\eps$) in the thin layer. 

In the cases $\gamma = -1,-3$, we use a Korn inequality for perforated domains which allows to control the $H^1$-norm of the displacement in the layer only by the symmetrised gradient in the layer. The price is an additional assumption (vanishing Dirichlet boundary condition) on the lateral boundary $\partial \omega$ as well as different scalings for the different components of the displacement, see  \cite[Theorem 2]{gahn2021two} or \cite[Theorem 6]{griso2020homogenization}.

\begin{lemma}[Korn inequality II]\label{lem:KornII}
For every $\weps  \in H^1(\OmEps^\lay)^3$ with $\weps = 0$ on $(-\eps/2 , \eps/2) \times \partial \omega $, the following inequality holds: 
\begin{align*}
\sum_{i=2}^3 \foe \Vert \weps^i\Vert_{L^2(\Omega_{\varepsilon}^{\lay})} + \sum_{i,j=2}^2 \foe\Vert \partial_i \weps^j \Vert_{L^2(\Omega_{\varepsilon}^M)} + \Vert \weps^3\Vert_{L^2(\Omega_{\varepsilon}^M)} +  \Vert \nabla \weps \Vert_{L^2(\Omega_{\varepsilon}^M)} \le \frac{C}{\varepsilon} \Vert e(\weps)\Vert_{L^2(\Omega_{\varepsilon}^M)}.
\end{align*}
\end{lemma}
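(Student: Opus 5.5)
The plan is to follow the plate decomposition technique of Griso for thin (perforated) domains, which is the approach behind \cite[Theorem 6]{griso2020homogenization} and \cite[Theorem 2]{gahn2021two}. Throughout, $x_1$ is the thickness direction and $(x_2,x_3)$ are the in-plane variables. The estimate then reads as a plate Korn inequality:
\begin{itemize}
\item the in-plane components $w_\eps^2,w_\eps^3$ and their in-plane derivatives are bounded by $\Vert e(\weps)\Vert_{L^2(\OmEps^\lay)}$;
\item the transverse component and the full gradient only get a bound of order $\eps^{-1}\Vert e(\weps)\Vert_{L^2(\OmEps^\lay)}$.
\end{itemize}
I read the index ranges in the displayed statement as the in-plane indices $2,3$, with the transverse component carrying the weaker scaling.

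\emph{Step 1: local rigid approximation.} In each cell $\eps(Y_0+k)$, $k\in\Lambda_\eps^0$, I apply the standard Korn inequality for the fixed connected Lipschitz set $Y_0$ and rescale. This gives a rigid displacement $r_k(x)=a_k+b_k\wedge(x-\eps k)$ with
\[
\Vert \weps - r_k\Vert_{L^2(\eps(Y_0+k))}+\eps\Vert \nabla\weps-\nabla r_k\Vert_{L^2(\eps(Y_0+k))}\le C\eps\Vert e(\weps)\Vert_{L^2(\eps(Y_0+k))}.
\]
The constant is independent of $\eps$ and $k$ by scaling. Comparing $r_k$ and $r_{k'}$ for neighbouring cells on a union of two adjacent cells (again a fixed connected Lipschitz set after rescaling) bounds the discrete differences:
\[
\eps^{1/2}|a_k-a_{k'}|\le C\eps\Vert e(\weps)\Vert
\qquad\text{and}\qquad
\eps^{3/2}|b_k-b_{k'}|\le C\Vert e(\weps)\Vert,
\]
both over the pair of cells.

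\emph{Step 2: macroscopic fields.} I interpolate the discrete families $(a_k)$ and $(b_k)$ to $Q_1$ functions $\mathcal U,\mathcal R$ on $\omega$. Summing the difference estimates over the cells gives
\[
\Vert\nabla_{x'}\mathcal R\Vert_{L^2(\omega)}\le C\eps^{-3/2}\Vert e(\weps)\Vert_{L^2(\OmEps^\lay)}.
\]
It also gives the Kirchhoff--Love compatibility: $\partial_{x_\alpha}\mathcal U_1$ is close to the corresponding rotation component, and the in-plane symmetric gradient $e_{x'}(\hat{\mathcal U})$ is bounded by $C\eps^{-1/2}\Vert e(\weps)\Vert$. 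The clamped condition $\weps=0$ on $(-\eps/2,\eps/2)\times\partial\omega$ passes to the boundary cells, so $\mathcal U$ and $\mathcal R$ are essentially zero on $\partial\omega$.

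\emph{Step 3: Poincaré and Korn on $\omega$.} With these boundary values I apply Poincaré's inequality to $\mathcal R$, which controls $\mathcal R$ and hence $\nabla_{x'}\mathcal U_1$. Poincaré again controls $\mathcal U_1$, and the two-dimensional Korn inequality in $H^1_0(\omega)^2$ controls $\hat{\mathcal U}$ and $\nabla_{x'}\hat{\mathcal U}$. I then return to $\weps$ via $\weps\approx\mathcal U+\mathcal R\wedge(x_1e_1)$ together with the cellwise error from Step 1. Converting $L^2(\omega)$ norms into $L^2(\OmEps^\lay)$ norms costs a factor $\eps^{1/2}$, which yields the stated powers of $\eps$. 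The in-plane components pick up the better scaling because the rotation contributes only $x_1\mathcal R$ with $|x_1|\le\eps/2$.

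The main obstacle is the bookkeeping at the lateral boundary in Step 2. The cells adjacent to $\partial\omega$ must be shown to carry rigid motions that are small in terms of $\Vert e(\weps)\Vert$; I would do this by applying Korn with a Dirichlet condition on the face lying on $\partial\omega$. One must also check that the $Q_1$ interpolants inherit zero traces, or differ from zero-trace functions by a controlled amount. Keeping track of the exact $\eps$-powers for each component when combining the discrete estimates is the other delicate point.
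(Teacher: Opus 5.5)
Your reading of the display is the intended one: the double sum should run over $i,j\in\{2,3\}$, and $\Vert\weps^3\Vert$ should be the transverse component $\Vert\weps^1\Vert$. Your plan is the Griso-type plate decomposition on which the cited results \cite[Theorem 2]{gahn2021two} and \cite[Theorem 6]{griso2020homogenization} rest; the paper gives no proof of its own.

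However, the translation estimate in Step~1 is wrong, and Step~2 does not follow from it. Note that $r_{k'}(x)=a_{k'}+\eps\,b_{k'}\wedge(k-k')+b_{k'}\wedge(x-\eps k)$ and $\Vert c+d\wedge(x-\eps k)\Vert_{L^2(\eps(Y_0+k))}\ge c_0\eps^{3/2}(|c|+\eps|d|)$. Comparing on a pair $P$ of adjacent cells therefore gives
\[
|b_k-b_{k'}|\le C\eps^{-3/2}\Vert e(\weps)\Vert_{L^2(P)},\qquad
\bigl|a_k-a_{k'}-\eps\,b_{k'}\wedge(k-k')\bigr|\le C\eps^{-1/2}\Vert e(\weps)\Vert_{L^2(P)} .
\]
Your bound $|a_k-a_{k'}|\le C\eps^{1/2}\Vert e(\weps)\Vert_{L^2(P)}$ has the wrong power and, more importantly, drops the rotation term $\eps\,b_{k'}\wedge(k-k')$. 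It fails in two simple cases:
\begin{itemize}
\item for $\weps=b\wedge x$, where $e(\weps)=0$ but $a_k-a_{k'}=\eps\,b\wedge(k-k')$;
\item for the clamped bending field $(\phi,-x_1\partial_2\phi,-x_1\partial_3\phi)$ with $\phi\in C_c^\infty(\omega)$, where the left side is of order $\eps$ and your right side of order $\eps^3$.
\end{itemize}
Summed over cells, your bound would even give $\Vert\nabla_{x'}\mathcal U\Vert_{L^2(\omega)}\le C\eps^{1/2}\Vert e(\weps)\Vert$, which is false. The Kirchhoff--Love compatibility you invoke in Step~2 comes precisely from the rotation term. The corrected estimate yields
\[
\Vert\partial_{x_\alpha}\mathcal U-\mathcal R\wedge e_\alpha\Vert_{L^2(\omega)}\le C\eps^{-1/2}\Vert e(\weps)\Vert_{L^2(\OmEps^\lay)},\qquad \alpha=2,3.
\]

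You also need this relation in all three components, not only the transverse part and the symmetric in-plane part you list. With your convention $r_k=a_k+b_k\wedge(x-\eps k)$, its antisymmetric in-plane part gives $\mathcal R_1\approx\tfrac12(\partial_{x_2}\mathcal U_3-\partial_{x_3}\mathcal U_2)$, up to an $L^2(\omega)$ error $C\eps^{-1/2}\Vert e(\weps)\Vert$. The two-dimensional Korn inequality for $\hat{\mathcal U}$ then gives $\Vert\mathcal R_1\Vert_{L^2(\omega)}\le C\eps^{-1/2}\Vert e(\weps)\Vert$, which is much better than the $\eps^{-3/2}$ that Poincar\'e gives.

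The in-plane derivative term needs exactly this. On each cell, $\partial_\alpha\weps^\beta=\partial_\alpha(\weps-r_k)^\beta\pm b_{k,1}$ for $\alpha\neq\beta$ in $\{2,3\}$. Equivalently, bounding $\nabla(\weps-\mathcal U-x_1\mathcal R\wedge e_1)$ by $C\Vert e(\weps)\Vert$ uses the full relation. With only the Poincar\'e bound on $\mathcal R_1$, you would get $\Vert\partial_\alpha\weps^\beta\Vert\le C\eps^{-1}\Vert e(\weps)\Vert$ instead of $C\Vert e(\weps)\Vert$.

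With these corrections, plus the boundary-cell argument you describe (Korn with a Dirichlet condition on the hole-free face lying on $\partial\omega$), your Step~3 bookkeeping yields the stated powers of $\eps$.
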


With these Korn inequalities at hand, we prove the well-posedness of the microscopic problem together with $\eps$-uniform a priori estimates.
\begin{theorem}\label{theo:existence_result_gen}
Given the assumptions (A1)--(A7), there exists a unique weak solution $\ueps$ in the sense of Definition \ref{def:weak_micro_solution}, which satisfies the following a priori estimate:
\begin{align*}
&\sum_{\pm}\left\{\norm{ \ueps}_{W^{1,\infty}(S;L^2(\OmEps^{\pm}))} 
+ \norm{e(\ueps)}_{L^{\infty}(S;L^2(\OmEps^{\pm}))}
+\|\nabla \ueps\|_{L^{\infty}(S;L^2(\OmEps^{\pm}))} \right\}
\\
&+\frac{1}{\sqrt{\eps}}\norm{ \ueps}_{W^{1,\infty}(S;L^2(\OmEps^\lay))}
+\eps^{\frac{\gamma}{2}}\norm{e(\ueps)}_{L^{\infty}(S;L^2(\OmEps^\lay))}
+\sqrt{\eps} \|\nabla \ueps\|_{L^{\infty}(S;L^2(\OmEps^\lay))} \le C.
\end{align*}
Additionally, for $\gamma = -3$, the following bound for the last two components of $\ueps$ holds:
\begin{align*}
   \frac{1}{\eps^{\frac32}} \sum_{i=2}^3 \|\ueps^i\|_{L^{\infty}(S;L^2(\OmEps^\lay))} \le C.
\end{align*}
\end{theorem}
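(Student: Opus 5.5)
The plan is a Galerkin approximation combined with the standard energy identity for second-order hyperbolic problems. The two Korn inequalities (Lemmas \ref{prop:estimate_H1_OmEps} and \ref{lem:KornII}) then turn energy control into the stated norms, uniformly in $\eps$.

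\emph{Existence and uniqueness for fixed $\eps$.} Take a basis of $\Hgam(\OmEps)$ (with the lateral Dirichlet condition (A7) built in if $\gamma\in\{-1,-3\}$) that is orthonormal in $L^2_{\varrho^\eps}(\OmEps)$. Solve the resulting linear ODE system. Derive the energy bounds below for the Galerkin approximations and pass to the limit using weak-$*$ compactness; all terms in \eqref{eq:weak_formulation_short} are linear, so this is straightforward. Uniqueness follows from the Ladyzhenskaya trick: test the homogeneous problem with $v(t)=\int_t^s w(\tau)\,d\tau$ on $(0,s)$. This gives the energy of $\int_0^s w$ vanishing, and Korn's inequality for fixed $\eps$ closes the argument.

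\emph{Energy identity.} Formally test with $v=\partial_t\ueps$ on $(0,t)$ (rigorously, at the Galerkin level). Set
\[
E(t)=\tfrac12\int_{\OmEps}\varrho^\eps|\partial_t\ueps(t)|^2\x+\tfrac12\int_{\OmEps}A^\eps e(\ueps(t)):e(\ueps(t))\x .
\]
Then
\[
E(t)=E(0)+\int_0^t\!\!\int_{\OmEps}\varrho^\eps f^\eps\cdot\partial_t\ueps+\int_0^t\!\!\int_{\GamN}g\cdot\partial_t\ueps .
\]
By (A1), (A2), $E$ is equivalent to $\sum_\pm\big(\|\partial_t u^\pm_\eps\|^2+\|e(u^\pm_\eps)\|^2\big)+\eps^{-1}\|\partial_t\ueps\|^2_{L^2(\OmEps^\lay)}+\eps^\gamma\|e(\ueps)\|^2_{L^2(\OmEps^\lay)}$. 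Assumptions (A5), (A6) give $E(0)\le C$.

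The body force term is bounded by $\|f^\pm\|\,\|\partial_t u^\pm_\eps\|+\eps^{-1/2}\|f^\lay_\eps\|\,\eps^{-1/2}\|\partial_t\ueps\|_{L^2(\OmEps^\lay)}$, and (A4) absorbs this via Young and Gronwall.

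\emph{Main obstacle: the boundary term.} It contains $\partial_t\ueps$, which is not controlled on $\GamN$. Integrate by parts in time:
\[
\int_0^t\!\!\int_{\GamN} g\cdot\partial_t\ueps=\int_{\GamN}g(t)\cdot\ueps(t)-\int_{\GamN}g(0)\cdot\ueps(0)-\int_0^t\!\!\int_{\GamN}\partial_t g\cdot\ueps .
\]
This uses $g\in H^1(S,L^2(\GamN))\hookrightarrow C(\bar S,L^2(\GamN))$. By (A3), $g$ vanishes near the lateral boundary of the layer, so only traces on $\partial\Omega^\pm$ away from the layer enter. Apply the trace inequality there in the bulk domains $\OmEps^\pm$, whose geometry is uniform in $\eps$, together with Korn I:
\[
\|\ueps\|_{L^2(\GamN)}\le C\sum_\pm\|\ueps\|_{H^1(\OmEps^\pm)}\le C\Big(\sum_\pm\|e(\ueps)\|_{L^2(\OmEps^\pm)}+\sqrt\eps\,\|e(\ueps)\|_{L^2(\OmEps^\lay)}\Big).
\]
The right-hand side is bounded by $C\,E^{1/2}$ for every $\gamma\le1$, since $\sqrt\eps\le\eps^{\gamma/2}$.

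Korn I requires $\ueps\in\Hgam(\OmEps)$. For $\gamma=-1,-3$ the lateral condition only helps, so the same estimate holds. Young's inequality with a small parameter on the $g(t)$ term, followed by Gronwall, gives $\sup_t E(t)\le C$.

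\emph{Derived bounds.} From $\sup_t E(t)\le C$ and Korn I (estimate \eqref{ineq:estimate_H1_OmEps}), using again $\sqrt\eps\le\eps^{\gamma/2}$, we get bounds on $\sum_\pm\|\nabla\ueps\|_{L^\infty L^2(\OmEps^\pm)}$, on $\eps^{-1/2}\|\ueps\|_{L^2(\OmEps^\lay)}$, and on $\sqrt\eps\|\nabla\ueps\|_{L^2(\OmEps^\lay)}$. The $W^{1,\infty}$ parts follow from the $\partial_t$ bound together with the $L^2$ bound.

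For $\gamma=-3$, apply Korn II to $\ueps|_{\OmEps^\lay}$, which vanishes laterally by (A7):
\[
\eps^{-1}\sum_{i=2,3}\|\ueps^i\|\le C\eps^{-1}\|e(\ueps)\|\le C\eps^{-1}\eps^{3/2}=C\eps^{1/2}.
\]
Hence $\eps^{-3/2}\sum_{i=2,3}\|\ueps^i\|\le C$.
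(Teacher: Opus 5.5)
Your proposal is correct and follows essentially the same route as the paper. It uses Galerkin for well-posedness and tests with $\partial_t\ueps$ for the energy estimate. It integrates the $g$-term by parts in time and closes with the trace inequality, Korn I, small-parameter absorption and Gronwall. It then uses Korn I for the gradient and layer bounds and Korn II for the $\gamma=-3$ refinement. The only minor differences are harmless: you make explicit the comparison $\sqrt{\eps}\le\eps^{\gamma/2}$, which the paper uses tacitly when closing Gronwall, and you get the $L^2$ bound on $\ueps$ from Korn I rather than from the fundamental theorem of calculus.
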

\begin{proof}
The existence and uniqueness result is standard. It can be proven similar as Theorem 12.4 from \cite{Schw:2018} by using the Galerkin method and relies on a priori estimates for the Galerkin approximation of the solution independent of the Galerkin-approximation parameter. As these a priori estimates are very similar to the $\eps$-independent estimates claimed in the Theorem, we only show these.
%
We multiply equation \eqref{eq:weak_formulation_short} with $\partial_t \ueps$ to obtain for $t_1 \in (0,T)$ (we emphasise that we also have $\partial_t(\varrho_{\eps} \partial_t \ueps) \in L^2(S;\Hgam(\OmEps)^{\ast})$)
 \begin{align*}
 	\int_0^{t_1} \langle \partial_t(\varrho_{\eps} \partial_t \ueps)(t),&\partial_t \ueps(t)\rangle_{\Hgam(\OmEps))^{*},\Hgam(\OmEps))}\,\t +\int_0^{t_1} \int_{\OmEps} A^\eps e(\ueps):e(\partial_t \ueps)\,\x\t \\
 	&= \int_0^{t_1}\langle f^\eps(t),\partial_t \ueps(t)\rangle_{L^2_{\varrho^\eps}(\OmEps)}\,\t+\int_0^{t_1}\langle g(t),\partial_t \ueps(t)\rangle_{L^2(\GamN)}\,\t.
 \end{align*}
Using the properties of $A^{\pm}$ and $A^\lay$, we obtain 
\begin{align*}
 &\frac{1}{2}\varrho_0\left(\norm{\partial_t \ueps(t_1)}^2_{L^2(\OmEps^{+})}+\frac{1}{\eps}\norm{\partial_t \ueps(t_1)}^2_{L^2(\OmEps^\lay)}+\norm{\partial_t \ueps(t_1)}^2_{L^2(\OmEps^{-})}\right)\\
 &+\frac{\alpha}{2}\left(\norm{e(\ueps)(t_1)}^2_{L^2(\OmEps^{+})}+\eps^{\gamma}\norm{e(\ueps)(t_1)}^2_{L^2(\OmEps^\lay)}+\norm{e(\ueps)(t_1)}^2_{L^2(\OmEps^{-})}\right)\\
 &-\frac{1}{2}\varrho_1\left(\norm{u^{+}_{1}}^2_{L^2(\OmEps^{+})}+\frac{1}{\eps}\norm{u^\lay_{1,\eps}}^2_{L^2(\OmEps^\lay)}+\norm{u^{-}_{1}}^2_{L^2(\OmEps^{-})}\right)\\
 &-\frac{\beta}{2}\left(\norm{e(u^{+}_{0})}^2_{L^2(\OmEps^{+})}+\eps^{\gamma}\norm{e(u^\lay_{0,\eps})}^2_{L^2(\OmEps^\lay)}+\norm{e(u^{-}_{0})}^2_{L^2(\OmEps^{-})}\right)\\
 \leq & \int_0^{t_1}\langle f^\eps(t),\partial_t \ueps(t)\rangle_{L^2_{\varrho^\eps}(\OmEps)}\,\t+\int_0^{t_1}\langle g(t),\partial_t \ueps(t)\rangle_{L^2(\GamN)}\,\t.
\end{align*}
Using the assumptions on $f^{\eps}$, we easily obtain that
\begin{align*}
 \int_0^{t_1}\langle f^\eps(t),\partial_t \ueps(t)\rangle_{L^2_{\varrho^\eps}(\OmEps)}\,\t \le C \left( 1  + \int_0^{t_1}\norm{\partial_t \ueps}^2_{L^2(\OmEps^{+})}+\frac{1}{\eps}\norm{\partial_t \ueps}^2_{L^2(\OmEps^\lay)}+\norm{\partial_t \ueps}^2_{L^2(\OmEps^{-})}\t  \right).
\end{align*}
For the term including $g$, we first integrate by parts in time to obtain
\begin{multline*}
\int_0^{t_1}\langle g(t),\partial_t \ueps(t)\rangle_{L^2(\GamN)}\,\t\\ = -\int_0^{t_1} \langle \partial_t g(t) , \ueps(t) \rangle_{L^2(\Gamma_N)} \t +\langle g(t_1),\ueps(t_1)\rangle_{L^2(\Gamma_N)} - \langle g(0), \ueps(0) \rangle_{L^2(\Gamma_N)}.
\end{multline*}
Hence, the regularity of $g$ together with the trace inequality implies   for every $\theta>0$ and a constant $C_{\theta}>0$ (depending on $\theta$ but not on $\eps$)
\begin{align*}
\int_0^{t_1}\langle g(t),\partial_t \ueps(t)\rangle_{L^2(\GamN)}\,\t &\le C_{\theta} + \sum_{\pm} \left\{ \theta \left(\|\ueps(t_1)\|_{H^1(\OmEps^{\pm})}^2 + \|u_0^\pm\|_{H^1(\OmEps^{\pm})}^2 \right)  + C \int_0^{t_1} \|\ueps\|^2_{H^1(\OmEps^{\pm})} \t  \right\}
\\
&\le C_{\theta} + C \int_0^{t_1} \sum_{\pm}  \|e(\ueps)\|_{H^1(\OmEps^{\pm})}^2 + \eps \|e(\ueps)\|_{L^2(\OmEps^\lay)}^2 \t
\\
&\quad + C\theta \bigg( \sum_{\pm} \left\{\|e(\ueps)(t_1)\|_{H^1(\OmEps^{\pm})}^2  + \|e(u_0^{\pm})\|_{H^1(\OmEps^{\pm})}^2 \right\} 
\\
&\qquad\qquad + \varepsilon \|e(\ueps)(t_1)\|_{L^2(\OmEps^\lay)}^2 + \varepsilon \|e(u_{0,\eps}^\lay)\|_{L^2(\OmEps^\lay)}^2\bigg),
\end{align*}
where we have used the Korn inequality from Lemma \ref{prop:estimate_H1_OmEps} in the last step. We emphasise that we do not exclude the case $\Gamma_D \subset \partial \Omega^{\pm}$, and therefore the terms on the right-hand side above associated with the membrane are necessary. 
Combining the previous estimates, using the assumptions on the initial data and choosing $\theta$ small enough, we obtain with an absorption argument
\begin{align*}
&\sum_{\pm} \left\{\norm{\partial_t \ueps(t_1)}^2_{L^2(\OmEps^{\pm})} +\norm{e(\ueps)(t_1)}^2_{L^2(\OmEps^{\pm})} \right\}+\frac{1}{\eps}\norm{\partial_t \ueps(t_1)}^2_{L^2(\OmEps^\lay)}
 +\eps^{\gamma}\norm{e(\ueps)(t_1)}^2_{L^2(\OmEps^\lay)}
 \\
 \le & C \bigg( 1 + \int_0^{t_1} \sum_{\pm}\left\{\norm{\partial_t \ueps}^2_{L^2(\OmEps^{+})} + \|e(\ueps)\|_{H^1(\OmEps^{\pm})}^2  \right\} +\frac{1}{\eps}\norm{\partial_t \ueps}^2_{L^2(\OmEps^\lay)}+ \eps \|e(\ueps)\|_{L^2(\OmEps^\lay)}^2 \t \bigg).
\end{align*}
Using the Gronwall inequality, we get
\begin{align*}
&\norm{\partial_t \ueps}^2_{L^{\infty}(S;L^2(\OmEps^{+}))}+\frac{1}{\eps}\norm{\partial_t \ueps}^2_{L^{\infty}(S;L^2(\OmEps^\lay))}+\norm{\partial_t \ueps}^2_{L^{\infty}(S;L^2(\OmEps^{-}))}\\
 &+\norm{e(\ueps)}^2_{L^{\infty}(S;L^2(\OmEps^{+}))}+\eps^{\gamma}\norm{e(\ueps)}^2_{L^{\infty}(S;L^2(\OmEps^\lay))}+\norm{e(\ueps)}^2_{L^{\infty}(S;L^2(\OmEps^{-}))} \le C.
\end{align*}
Now, the estimates for the gradients  are obtained again by using  the Korn inequality from Lemma \ref{prop:estimate_H1_OmEps}.
By the fundamental theorem of calculus, taking into account the assumptions on the initial data, we immediately obtain 
\begin{align*}
\norm{ \ueps}^2_{L^{\infty}(S;L^2(\OmEps^{+}))}+\frac{1}{\eps}\norm{ \ueps}^2_{L^{\infty}(S;L^2(\OmEps^\lay))}+\norm{ \ueps}^2_{L^{\infty}(S;L^2(\OmEps^{-}))} \le C.
\end{align*}
Finally, for the case $\gamma = -3$, we can improve the estimates on the last two components   by using the Korn inequality of Lemma \ref{lem:KornII}  noting Assumption \ref{ass:zero_Dirichlet_BC_gamma-1-3}  ($\ueps$ has vanishing trace on the lateral boundary in case $\gamma = -3$). Thus, we can apply the Korn inequality from Lemma \ref{lem:KornII} to obtain the desired result.

\end{proof}

\begin{remark}\
 In the case $\gamma = 1$, we control the gradient of $\ueps$ in the layer by the norms from the bulk regions. This seems to be not possible for the estimates in the case $\gamma  =-3$. In this case, we control the last two components of the displacement $\ueps$ and the gradient via the symmetrised gradient in the layer. Therefore, the additional zero boundary condition on $\left(-\frac{\eps}{2},\frac{\eps}{2}\right) \times \partial \omega $  appears to be necessary. 

\end{remark}

\section{Compactness results}
\label{sec:compactness}

The a priori estimates of the previous section allow to use compactness results yielding limits of the sequences of solutions to the microscopic problem of Definition \ref{def:weak_micro_solution}. The precise results are collected in this section before the problems satisfied by the limits are identified in the next section.

We formulate the compactness results using the unfolding operator, cf.~Definition \ref{def:unfolding_operator}, which satisfies the following properties (cf.~\cite[Proposition 3.1]{Griso:2015}).
\begin{proposition}\label{prop:properties_unfolding_operator}\
    \begin{enumerate}[label = (\roman*)]
        \item For all $\phi\in L^1(\Omega^\lay_{\eps})$: \[ \int_{\Omega^\lay_{\eps}}\phi\,\x=\eps\int_{\omega\times Y_0}\TEps(\phi)(x',y)\,\x'\y. \]
        \item For all $\phi\in L^2(\Omega^\lay_{\eps})$: \[ \norm{\phi}_{L^2(\Omega^\lay_{\eps})} = \sqrt{\eps}\norm{\TEps(\phi)}_{L^2(\omega\times Y_0)}. \]
        \item Let $\phi\in H^1(\Omega^\lay_{\eps})$. Then, $\nabla_y(\TEps(\phi))=\eps\TEps(\nabla\phi)$ a.e. in $\omega\times Y_0$.
    \end{enumerate}
\end{proposition}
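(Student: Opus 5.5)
The three properties follow directly from a change of variables over the cells that tile the layer. By assumption, $\overline{\omega} = \bigcup_{k\in\Lambda_\eps}\eps(\overline{Y'}+k)$. Hence, up to null sets, $\Omega^\lay_\eps$ decomposes as the disjoint union of the scaled perforated cells $\eps\bigl((0,k)+Y_0\bigr)$ for $k\in\Lambda_\eps$. On each cell $\omega_k:=\eps(k+Y')$, the integer part $\eps[(0,x')/\eps]_Y$ is constant and equal to $\eps(0,k)$. Consequently, for fixed $x'\in\omega_k$, the map $y\mapsto \TEps(\phi)(x',y)=\phi(\eps(0,k)+\eps y)$ does not depend on $x'$ within $\omega_k$. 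This piecewise-constant structure in $x'$ is what every step below relies on.

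For (i), I split the integral over $\omega\times Y_0$ into the pieces $\omega_k\times Y_0$. On each piece the integrand is independent of $x'$, so integrating in $x'$ gives the factor $|\omega_k|=\eps^2$. This yields
\begin{align*}
\eps\int_{\omega\times Y_0}\TEps(\phi)\,\mathrm{d}x'\,\mathrm{d}y=\eps^3\sum_{k\in\Lambda_\eps}\int_{Y_0}\phi\bigl(\eps(0,k)+\eps y\bigr)\,\mathrm{d}y .
\end{align*}
The substitution $x=\eps(0,k)+\eps y$ has Jacobian $\eps^3$. Each summand therefore equals $\int_{\eps((0,k)+Y_0)}\phi\,\mathrm{d}x$, and summing over $k$ gives $\int_{\Omega^\lay_\eps}\phi\,\mathrm{d}x$.

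Measurability of $\TEps(\phi)$ is immediate, since it is a composition of $\phi$ with a piecewise affine map. The identity is first established for $\phi\ge0$ and then extended to $L^1$ by linearity.

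For (ii), I apply (i) to $|\phi|^2\in L^1(\Omega^\lay_\eps)$. Since $\TEps(|\phi|^2)=|\TEps(\phi)|^2$ pointwise, this gives $\|\phi\|^2_{L^2(\Omega^\lay_\eps)}=\eps\|\TEps(\phi)\|^2_{L^2(\omega\times Y_0)}$. Taking square roots finishes the proof.

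For (iii), I fix $x'\in\omega_k$. The function $y\mapsto\phi(\eps(0,k)+\eps y)$ is the composition of $\phi\in H^1$ with an affine dilation, so it lies in $H^1(Y_0)$. By the chain rule for Sobolev functions under affine maps, its weak gradient is $\eps(\nabla\phi)(\eps(0,k)+\eps y)=\eps\,\TEps(\nabla\phi)(x',y)$.

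The only point needing care is interpreting $\nabla_y\TEps(\phi)$ as a function on $\omega\times Y_0$. It must be understood as the gradient in $y$ for a.e. fixed $x'$, or equivalently in the distributional sense on $\omega\times Y_0$. Both readings agree because the function is constant in $x'$ on each open $\omega_k$: testing with $\psi\in C_c^\infty(\omega_k\times Y_0)$ and using Fubini reduces the distributional identity to the cellwise one.

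None of these steps is a serious obstacle. The key point is the exact tiling assumption, which guarantees that there are no boundary cells, so (i) holds as an identity rather than an inequality.
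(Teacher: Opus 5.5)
Your proposal is correct: the decomposition $\omega=\bigcup_k\omega_k$ (up to a null set), the $x'$-independence of $\TEps(\phi)$ on each $\omega_k\times Y_0$, and the substitution $x=\eps(0,k)+\eps y$ are exactly the standard argument behind \cite[Proposition 3.1]{Griso:2015}, which the paper cites instead of giving its own proof. The one point to tighten is measurability and a.e.\ well-definedness of $\TEps(\phi)$: being a composition with a piecewise affine map is not enough in general. Here it works because, on each $\omega_k\times Y_0$, the map $(x',y)\mapsto\eps(0,k)+\eps y$ is a projection followed by an affine bijection, so preimages of null sets are null.
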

We define
\begin{equation}\label{def:H1_tilde_per_Y0}
    \widetilde{H}^1_{\#}(Y_0)\coloneqq\{\phi\in H^1(Y_0)\,|\, \phi\text{ is } Y'\text{-periodic with respect to }y'=(y_2,y_3)\}.
\end{equation} 
With this at hand, we can state the compactness results for the microscopic sequence $\ueps$. The results are more or less direct consequences of the a priori estimates from Theorem \ref{theo:existence_result_gen}. 
We formulate the results separately in the bulk domains and the thin layer, starting with the convergence in the bulk regions which does not require any unfolding:
\begin{proposition}\label{prop:compactness_bulk_domains}
There exist limit functions
\begin{align*}
u^{\pm} \in L^{\infty}(S;H^1_{\GamD}(\Omega^{\pm}))^3 \cap W^{1,\infty}(S;L^2(\Omega^{\pm}))^3,
\end{align*}
such that, up to a subsequence, 
\begin{align*}
\chi_{\OmEps^\pm}\ueps &\rightarrow u^\pm &\mbox{  in }& L^2(S\times\Omega^\pm)^3,
\\
\chi_{\OmEps^\pm}\nabla \ueps &\weak \nabla u^\pm  &\mbox{ weakly in }& L^2(S\times\Omega^\pm)^3,
\\
\chi_{\OmEps^\pm}\partial_t \ueps &\rightarrow \partial_t u^\pm  &\mbox{   in }& L^2(S\times\Omega^\pm)^3,
\\
\ueps|_{S_{\eps}^{\pm}} &\weak u^{\pm}|_{\omega}  &\mbox{ weakly in }& L^2(S \times \omega)^3.
\end{align*}
Furthermore, the initial condition $u^{\pm} (0) = u_0^{\pm}$ holds.
\end{proposition}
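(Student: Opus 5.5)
The plan is to derive everything from the uniform bounds of Theorem \ref{theo:existence_result_gen} on the bulk parts. The one difficulty is that $\OmEps^\pm$ depends on $\eps$. To handle it, I fix a small $\delta>0$ and work on $\Omega^\pm_\delta:=\{x\in\Omega^\pm : \pm x_1>\delta\}$, which lies inside $\OmEps^\pm$ for $\eps<2\delta$. I prefer to avoid extension operators: shifting in $x_1$ would spoil the Dirichlet condition wherever $\GamD$ meets the lateral boundary. Instead I argue with $\delta$-dependent subdomains and let $\delta\to 0$ at the end. The only bulk-side complication is the shrinking strip of width $\eps/2$, and in $L^2$ its contribution is controlled by its measure.

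\textbf{Bounds and weak limits.} On $\Omega^\pm_\delta$, the sequences $\ueps$, $\partial_t\ueps$ and $\nabla\ueps$ are bounded in $L^\infty(S;L^2)$ uniformly in $\eps$. Moreover $\ueps$ is bounded in $L^\infty(S;H^1(\Omega^\pm_\delta))\cap W^{1,\infty}(S;L^2(\Omega^\pm_\delta))$. Hence $\chi_{\OmEps^\pm}\ueps$, $\chi_{\OmEps^\pm}\partial_t\ueps$ and $\chi_{\OmEps^\pm}\nabla\ueps$ are bounded in $L^2(S\times\Omega^\pm)$ and have weak(-$*$) limits $u^\pm$, $v^\pm$, $W^\pm$ along a subsequence. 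Restricting to each $\Omega^\pm_\delta$ and using a diagonal argument over $\delta=1/k$, I identify $v^\pm=\partial_t u^\pm$ and $W^\pm=\nabla u^\pm$ in the distributional sense on every $\Omega^\pm_\delta$, and hence on $\Omega^\pm$. The strip $\Omega^\pm\setminus\OmEps^\pm$ has measure $O(\eps)$, and the functions are bounded in $L^\infty(S;L^2)$ there, so no mass is lost. Weak-$*$ lower semicontinuity then gives $u^\pm\in L^\infty(S;H^1(\Omega^\pm))\cap W^{1,\infty}(S;L^2(\Omega^\pm))$. Because the trace map is weakly continuous, the zero trace on $\GamD\cap\partial\Omega^\pm_\delta$ passes to the limit on each $\Omega^\pm_\delta$, and therefore holds on $\GamD\cap\partial\Omega^\pm$.

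\textbf{Strong convergence.} On each $\Omega^\pm_\delta$, the Aubin--Lions lemma applied to the bounds in $L^2(S;H^1)\cap H^1(S;L^2)$ gives strong convergence of $\ueps$ in $L^2(S\times\Omega^\pm_\delta)$. The remaining strip contributes at most $\|\chi_{\OmEps^\pm}\ueps\|_{L^2(S\times(\Omega^\pm\setminus\Omega^\pm_\delta))}+\|u^\pm\|_{L^2(S\times(\Omega^\pm\setminus\Omega^\pm_\delta))}$. I bound the first term by $C\delta^{1/2}$ using the one-dimensional estimate $\|w\|^2_{L^2(\text{strip of width }\delta)}\le C\delta\|w\|^2_{H^1}$. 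This holds uniformly for $H^1$ functions on $\OmEps^\pm$, since it follows from the fundamental theorem of calculus in $x_1$ together with the trace inequality. The second term tends to zero as $\delta\to0$ by absolute continuity. Together these give $\chi_{\OmEps^\pm}\ueps\to u^\pm$ strongly.

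\textbf{Main obstacle: $\partial_t\ueps$.} Strong convergence of the time derivatives needs genuine time compactness of $\partial_t\ueps$, so I expect this to be the hardest step. My first attempt is Aubin--Lions (Simon) with the triple $L^2\subset\subset H^{-1}$. This requires a bound on $\partial_{tt}\ueps$ in $L^2(S;H^{-1}(\Omega^\pm_\delta))$, which I would get from the weak formulation with test functions $\phi\in C_0^\infty(\Omega^\pm_\delta)$:
\[
\|\partial_t(\varrho^\pm\partial_t\ueps)\|_{L^2(S;H^{-1}(\Omega^\pm_\delta))}\le C.
\]
The weight $\varrho^\pm$ is only $L^\infty$, so what this directly yields is strong convergence of $\varrho^\pm\partial_t\ueps$ in $L^2(S;H^{-1})$. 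Combined with weak convergence in $L^2$, that is not enough for strong $L^2$ convergence. If that gap cannot be closed, my fallback is energy convergence: pass to the limit in the energy identity and compare it with the energy identity of the limit problem, which is well-posed, so the full sequence converges. Lower semicontinuity would then upgrade weak convergence to strong convergence of $\partial_t\ueps$ and $e(\ueps)$. This fallback needs the limit equations, but it only uses the bulk and layer energies.

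\textbf{Traces and initial condition.} For the traces, I write $\ueps(t,\pm\eps/2,x')-\ueps(t,\pm\delta,x')$ as an integral of $\partial_1\ueps$. The $L^2$ bound on $\nabla\ueps$ then makes this difference $O(\delta^{1/2})$ in $L^2(S\times\omega)$. Weak convergence of traces on the fixed planes $\{\pm\delta\}\times\omega$, followed by $\delta\to0$, gives $\ueps|_{S^\pm_\eps}\weak u^\pm|_\omega$. Finally, $u^\pm(0)=u_0^\pm$ follows from the bound in $W^{1,\infty}(S;L^2)$: the map $w\mapsto w(0)$ is weakly continuous on $H^1(S;L^2(\Omega^\pm_\delta))$, and $\ueps(0)=u_0^\pm$ on $\OmEps^\pm$.
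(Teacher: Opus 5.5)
The real gap is the third convergence: $\chi_{\OmEps^\pm}\partial_t\ueps\to\partial_t u^\pm$ strongly in $L^2(S\times\Omega^\pm)^3$. Neither of your routes closes it.

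In the Aubin--Lions attempt, the obstruction is not the $L^\infty$ weight $\varrho^\pm$. Even for constant density, bounds on $\partial_t\ueps$ in $L^\infty(S;L^2)$ and on $\partial_{tt}\ueps$ in $L^2(S;H^{-1})$ give compactness only in $L^2(S;H^{-1})$. Nothing controls $\partial_t\ueps$ in a space compactly embedded in $L^2$.

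The energy fallback needs $\limsup_{\eps\to0}E_\eps(t)\le E(t)$, that is, convergence of the initial energies and of the work of the forces. In the layer this requires three things:
\begin{itemize}
\item $\int_{\omega\times Y_0}\TEps(\varrho^\lay_\eps)|\TEps(u^\lay_{1,\eps})|^2\to\int_{\omega\times Y_0}\varrho^\lay|u^\lay_1|^2$;
\item convergence of the layer strain energy of $u^\lay_{0,\eps}$ to the limit strain energy;
\item convergence of $\int\TEps(\varrho^\lay_\eps)\TEps(f^\lay_\eps)\cdot\TEps(\partial_t\ueps)$, which is a product of two merely weakly convergent sequences.
\end{itemize}
Assumptions (A4)--(A6) give only weak convergence of $\TEps(f^\lay_\eps)$, $\TEps(u^\lay_{0,\eps})$ and $\TEps(u^\lay_{1,\eps})$. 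So you only obtain $E(t)\le\liminf_{\eps\to0}E_\eps(t)$, which upgrades nothing.

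This is not a trick you overlooked. The paper's proof also gets only the weak convergences from the a priori estimates. It proves strong convergence only for $\chi_{\OmEps^\pm}\ueps$ and the traces, by reference to \cite[Section 5.1]{Neuss:2007}, and never addresses $\partial_t\ueps$. In the identification of the limit problems, $\partial_t\ueps$ enters only linearly against fixed test functions, so weak convergence is all that is ever used.

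Heuristically, strong convergence should not even be expected under the stated hypotheses. For $\gamma=1$, take $u^\lay_{0,\eps}=0$ and $u^\lay_{1,\eps}$ of order one, oscillating in $x_1$ on the scale $\eps^2$ in the solid part next to $S^+_\eps$.
\begin{itemize}
\item Its unfolding tends weakly to $0$, yet it carries layer energy of order one.
\item The layer impedance $\sqrt{(\varrho^\lay/\eps)(\eps A^\lay)}$ is of order one.
\item So a fixed fraction of this energy is transmitted into $\Omega^+$ as a wave of wavelength $O(\eps)$ with $\partial_t\ueps$ of order one. That converges weakly but not strongly.
\end{itemize}
You should therefore either claim only weak convergence of $\chi_{\OmEps^\pm}\partial_t\ueps$, which your bounds give, or add well-prepared data. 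The latter means strong two-scale convergence of the layer data and force, with convergence of energies; you would then run your energy argument after the limit problem has been identified.

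The other steps are correct and replace the cited argument with a self-contained one. Working on $\Omega^\pm_\delta$ avoids shift or extension operators, at the cost of the $\delta$-bookkeeping. This covers Aubin--Lions on $\Omega^\pm_\delta$, the $\eps$-uniform strip estimate, the fundamental-theorem-of-calculus comparison of traces, and the initial condition. Combined with the interpolation trace inequality on $\{\pm\delta\}\times\omega$, the trace comparison even gives the strong trace convergence used in the proof of Proposition~\ref{prop:interface_condition}.

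One remark does not hold: that the strip has measure $O(\eps)$, so \emph{no mass is lost}. An $L^2$ bound does not exclude concentration on thin sets. The remark is also unnecessary, since you identify the limits with test functions supported in some $\Omega^\pm_\delta$.
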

\begin{proof}
The weak convergences follow directly from the a priori estimates in Theorem \ref{theo:existence_result_gen} and standard weak compactness in $L^2$. The  strong convergence of $\chi_{\OmEps^{\pm}} \ueps$ in $L^2(S \times \Omega^{\pm})^3$  and, therefore, of $\ueps|_{S_{\eps}^{\pm}}$ in $L^2(S \times \omega)^3$ in particular are obtained by similar arguments as in \cite[Section 5.1]{Neuss:2007}, see also the proof of \cite[Proposition 3]{gahn2025effective} for a slightly different argument. 
\end{proof}
\begin{remark}\label{rem:strong_convergence_bulk}
Until now, we have no information about $\partial_{tt} u^{\pm}$ and the initial condition for $\partial_t u^{\pm}(0)$. We will obtain results for these quantities from the macroscopic model later. However, we emphasise that it is possible to show a priori estimates for $\partial_{tt} \ueps $ in $L^2(S,H_{\Gamma_D}^1(\Omega)')^3$, obtaining more regularity, and thus also (weak) compactness for the second derivative.

\end{remark}
Next, we show the compactness result in the thin layer, where we distinguish between the three cases $\gamma \in \{1,-1,-3\}$. For this, we consider the unfolded sequence $\TEps(\ueps)$.
\begin{proposition}[The case $\gamma = 1$]\label{prop:compactness_layer_gamma_1}
There exists 
\begin{align*}
u^\lay \in  L^{\infty}(S;L^2(\omega,\widetilde{H}_{\#}^1(Y_0)))^3 \cap W^{1,\infty}(S;L^2(\omega\times Y_0))^3
\end{align*}
such that, up to a subsequence,
\begin{align*}
\TEps(\ueps) &\weak u^\lay &\mbox{ weakly in }& L^2(S \times \omega;H^1(Y_0))^3,
\\
\TEps (\partial_t \ueps) &\weak \partial_t u^\lay &\mbox{ weakly in }& L^2(S \times \omega \times Y_0)^3.
\end{align*}
In addition, the initial condition $u^\lay(0) = u_0^\lay$ holds.
\end{proposition}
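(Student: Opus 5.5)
The proof combines the a priori estimates of Theorem \ref{theo:existence_result_gen} (with $\gamma=1$) with the properties of the unfolding operator in Proposition \ref{prop:properties_unfolding_operator}, and then uses weak compactness.

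\textbf{Uniform bounds.} By Proposition \ref{prop:properties_unfolding_operator}(ii),
\[
\|\TEps(\ueps)\|_{L^2(\omega\times Y_0)}=\eps^{-1/2}\|\ueps\|_{L^2(\OmEps^\lay)},
\]
and the same identity holds for $\partial_t\ueps$. Both right-hand sides are bounded uniformly in $\eps$ and $t$ by the term $\frac{1}{\sqrt\eps}\|\ueps\|_{W^{1,\infty}(S;L^2(\OmEps^\lay))}\le C$. For the gradient, (iii) and (ii) give
\[
\|\nabla_y\TEps(\ueps)\|_{L^2(\omega\times Y_0)}=\eps\,\|\TEps(\nabla\ueps)\|_{L^2(\omega\times Y_0)}=\sqrt\eps\,\|\nabla\ueps\|_{L^2(\OmEps^\lay)}\le C.
\]
This is exactly the $\sqrt\eps\|\nabla\ueps\|$ bound of the theorem, which for $\gamma=1$ comes from Korn inequality I. Hence $\TEps(\ueps)$ is bounded in $L^\infty(S;L^2(\omega;H^1(Y_0)))^3$ and $\TEps(\partial_t\ueps)=\partial_t\TEps(\ueps)$ is bounded in $L^\infty(S;L^2(\omega\times Y_0))^3$.

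\textbf{Extraction of limits.} Weak and weak-$*$ compactness give a subsequence with
\[
\TEps(\ueps)\weak u^\lay \text{ in } L^2(S\times\omega;H^1(Y_0))^3, \qquad \partial_t\TEps(\ueps)\weak w \text{ in } L^2(S\times\omega\times Y_0)^3,
\]
together with weak-$*$ convergence in the corresponding $L^\infty$-in-time spaces. Testing against $\phi\,\psi'(t)$ with $\psi\in C_0^\infty(S)$ identifies $w=\partial_t u^\lay$. The $L^\infty$ bounds pass to the limit by weak-$*$ lower semicontinuity, which yields $u^\lay\in L^\infty(S;L^2(\omega;H^1(Y_0)))^3\cap W^{1,\infty}(S;L^2(\omega\times Y_0))^3$.

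\textbf{Periodicity in $y'$.} This is the main obstacle. I would use the standard argument for unfolding in thin domains, as in \cite{Neuss:2007} and \cite{Cior:2018}. Take $\phi\in C_0^\infty(\omega)$ and $\psi\in C^\infty$ on the lateral faces. Compare the traces of $\TEps(\ueps)$ on the opposite faces $\{y_j=0\}$ and $\{y_j=1\}$, $j=2,3$, of $Y_0$.

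The lateral faces lie in $Y_0$ because $\overline{Y_1}\subset Y$, so $\ueps$ is $H^1$ across cell interfaces. Consequently
\[
\TEps(\ueps)(x'+\eps e_j,y)|_{y_j=0}=\TEps(\ueps)(x',y)|_{y_j=1}.
\]
Integrating against $\phi(x')\psi$ and shifting $x'$ turns the difference of the two traces into
\[
\int \TEps(\ueps)|_{y_j=0}\,\bigl(\phi(x'-\eps e_j)-\phi(x')\bigr)\psi,
\]
up to boundary cells that the support of $\phi$ excludes. This is $O(\eps)$ because the traces are bounded via the $H^1(Y_0)$ bound. Weak continuity of the trace map $L^2(\omega;H^1(Y_0))\to L^2(\omega;L^2(\partial Y_0))$ then gives equal traces of $u^\lay$ on opposite faces, so $u^\lay(t,x',\cdot)\in\widetilde H^1_\#(Y_0)$.

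\textbf{Initial condition.} Since $\TEps(\ueps)$ is bounded in $H^1(S;L^2(\omega\times Y_0))^3$, the map $\TEps(\ueps)\mapsto\TEps(\ueps)(0)$ is weakly continuous into $L^2(\omega\times Y_0)^3$. Therefore $\TEps(\ueps)(0)=\TEps(u^\lay_{0,\eps})\weak u^\lay(0)$. By (A5) the same sequence converges weakly to $u_0^\lay$, and uniqueness of weak limits gives $u^\lay(0)=u_0^\lay$.
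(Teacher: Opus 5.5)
Your proposal is correct and follows the same route as the paper, whose one-line proof invokes only the a priori estimates of Theorem \ref{theo:existence_result_gen}, the unfolding properties of Proposition \ref{prop:properties_unfolding_operator}, and weak compactness. The steps you add are the standard ones the paper leaves implicit: the shift argument comparing traces of $\TEps(\ueps)$ on opposite lateral faces to get $Y'$-periodicity, and the weak continuity of evaluation at $t=0$ on $H^1(S;L^2(\omega\times Y_0))$ to get the initial condition.
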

\begin{proof}
This follows from the a priori estimates in Theorem \ref{theo:existence_result_gen} and the properties of the unfolding operator in Proposition \ref{prop:properties_unfolding_operator} together with standard (weak) compactness in $L^2$.
\end{proof}
Again, we have no information about $\partial_{tt} u^\lay$, which will be obtained from the macromodel later.
\begin{proposition}[The case $\gamma = -1$]\label{prop:compactness_layer_gamma_-1} For $\gamma = -1$ (and noting the additional assumption \ref{ass:zero_Dirichlet_BC_gamma-1-3} in this case), there exist 
\begin{align*}
u^\lay &\in L^2(S;H_0^1(\omega)^2 \times L^2(\omega))\cap W^{1,\infty}(S;L^2(\omega))^3,
\\
u^{\lay,1} &\in L^2(S \times \omega,\widetilde{H}_{\#}(Y_0)/\R)^3
\end{align*}
such that, up to a subsequence,
\begin{align*}
\TEps(\ueps) &\weak u^\lay &\mbox{ weakly in }& L^2(S\times \omega;H^1(Y_0))^3,
\\
\frac{1}{\eps} e_y(\TEps(\ueps)) = \TEps(e(\ueps)) &\weak e_{x'}(\hat{u}^\lay) + e_y(u^{\lay,1}) &\mbox{ weakly in }& L^2(S\times \omega \times Y_0)^{3\times 3},
\\
\TEps (\partial_t \ueps) &\weak \partial_t u^\lay &\mbox{ weakly in }& L^2(S \times \omega \times Y_0)^3,
\end{align*}
with $\hat{u}^\lay := (u_1^\lay,u_2^\lay)$. 
Furthermore, $u^\lay(0) = u_0^\lay$ holds.
\end{proposition}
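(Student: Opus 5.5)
The plan is to derive everything from the $\gamma=-1$ a priori bounds of Theorem~\ref{theo:existence_result_gen},
\[
\eps^{-1/2}\|\ueps\|_{W^{1,\infty}(S;L^2(\OmEps^\lay))}+\eps^{-1/2}\|e(\ueps)\|_{L^\infty(S;L^2(\OmEps^\lay))}\le C,
\]
supplemented by the thin-layer Korn inequality of Lemma~\ref{lem:KornII}. That lemma applies because of Assumption~\ref{ass:zero_Dirichlet_BC_gamma-1-3}. With $\|e(\ueps)\|_{L^2(\OmEps^\lay)}\le C\sqrt\eps$, Lemma~\ref{lem:KornII} gives $\|\nabla\ueps\|_{L^2(\OmEps^\lay)}\le C\eps^{-1/2}$. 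It also gives $\eps^{-1}\|\partial_i\ueps^j\|_{L^2(\OmEps^\lay)}\le C\eps^{-1/2}$ for the in-plane indices $i,j\in\{2,3\}$.

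We then unfold using Proposition~\ref{prop:properties_unfolding_operator}. This yields $\|\TEps(\ueps)\|_{L^2}\le C$ and $\|\nabla_y\TEps(\ueps)\|_{L^2}=\eps\,\eps^{-1/2}\|\nabla\ueps\|_{L^2(\OmEps^\lay)}\le C\eps$. Likewise $\|\TEps(e(\ueps))\|_{L^2}\le C$ and $\|\TEps(\partial_t\ueps)\|_{L^2}\le C$. Weak compactness then gives a subsequence with $\TEps(\ueps)\weak u^\lay$ in $L^2(S\times\omega;H^1(Y_0))^3$, where $\nabla_y u^\lay=0$. Since $Y_0$ is connected (the holes satisfy $\overline{Y_1}\subset Y$), $u^\lay$ is independent of $y$.

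The derivative limit $\TEps(\partial_t\ueps)\weak\partial_t u^\lay$ is identified by testing in time, using that $\TEps$ commutes with $\partial_t$. The $W^{1,\infty}(S;L^2)$ regularity comes from the $L^\infty$-in-time bounds. The initial condition $u^\lay(0)=u_0^\lay$ follows from the weak continuity in time of $\TEps(\ueps)$ in $L^2(\omega\times Y_0)$, since $\TEps(u_{0,\eps}^\lay)\weak u^\lay_0$ by (A5).

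For the in-plane regularity of $\hat u^\lay$, I would avoid $y$-only information. Instead I would consider the averaged functions $\bar u_\eps(x'):=\eps^{-1}\int_{-\eps/2}^{\eps/2}\ueps\,dx_1$ over the full strip. These are not directly defined there, because of the holes. Alternative: use an extension operator from the perforated layer to $\OmEps^0$ that preserves the Korn-type estimates (standard for $\overline{Y_1}\subset Y$). Then $\eps^{-1/2}\|\partial_i \tilde u_\eps^j\|_{L^2(\OmEps^0)}\le C$ for $i,j=2,3$, together with the lateral zero trace, gives $\hat{\bar u}_\eps$ bounded in $L^2(S;H_0^1(\omega))^2$. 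Combined with the weak convergence of the unfolded averages, this yields $\hat u^\lay\in L^2(S;H^1_0(\omega))^2$.

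The main obstacle is the identification $\TEps(e(\ueps))\weak e_{x'}(\hat u^\lay)+e_y(u^{\lay,1})$ with $u^{\lay,1}$ $Y'$-periodic. I would invoke the standard thin-domain two-scale result, as in \cite{Neuss:2007} and \cite{GahnEffectiveTransmissionContinuous}, applied componentwise. For each component $j$, set $w_\eps:=\eps^{-1}\bigl(\TEps(\ueps^j)-M_\eps(\ueps^j)\bigr)$, with $M_\eps$ the local cell average. Its $y$-gradient equals $\TEps(\nabla\ueps^j)$, which is bounded for $j=2,3$ in the in-plane directions. Periodicity defects of $w_\eps$ across $S_Y$-lateral faces converge to $\partial_{x'}\hat u^\lay\cdot y'$, which gives $\nabla_y$-limit $\nabla_{x'}u^\lay_j+\nabla_y(\text{periodic})$.

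For the normal component $j=1$ and the normal derivatives, only the symmetric gradient is controlled, so $\nabla\ueps$ itself is only bounded by $\eps^{-1/2}$ at order $1/\eps$. I would therefore apply the periodic-unfolding characterisation of limits of symmetric gradients (Griso's decomposition, \cite{Griso:2015}). This gives the limit of $\TEps(e(\ueps))$ as $e_{x'}(\hat u^\lay)+e_y(u^{\lay,1})$ after testing against $Y'$-periodic divergence-free fields $\Phi(x',y)$ with $\Phi\nu=0$ on $\partial Y_0\setminus S_Y^{\pm}$. Here the contribution of $u_1^\lay$ disappears because tangential derivatives of the first component enter only through $e_{12},e_{13}$, which are absorbed into $e_y(u^{\lay,1})$ via the term $y_1\nabla_{x'}u^\lay_1$.
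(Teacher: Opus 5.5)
Your proposal has two genuine gaps.

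\textbf{First gap: the $y$-independence of $u^\lay$.} Your bound $\|\nabla_y\TEps(\ueps)\|_{L^2}\le C\eps$ is an arithmetic error. From $\|\nabla\ueps\|_{L^2(\OmEps^\lay)}\le C\eps^{-1/2}$ and Proposition~\ref{prop:properties_unfolding_operator} you get $\|\nabla_y\TEps(\ueps)\|_{L^2(\omega\times Y_0)}=\sqrt{\eps}\,\|\nabla\ueps\|_{L^2(\OmEps^\lay)}\le C$. That is mere boundedness, which is all the paper uses for the convergence in $L^2(S\times\omega;H^1(Y_0))^3$. After unfolding, only the in-plane derivatives of the in-plane components are $O(\eps)$. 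The derivative $\partial_{y_1}$ of the in-plane components and all $y$-derivatives of the normal component are only $O(1)$.

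Hence $\nabla_y u^\lay=0$ does not follow. The $y$-independence of $u^\lay$, which is exactly what separates this case from $\gamma=1$, is left unproved. The repair is to use the symmetric gradient: $e_y(\TEps(\ueps))=\eps\,\TEps(e(\ueps))\to0$, so on the connected set $Y_0$ the limit is a rigid displacement $a(t,x')+b(t,x')\wedge y$. The in-plane estimate only removes the rotation about the normal axis. To get $b=0$ you also need the $Y'$-periodicity of the unfolded limit, which must be proved (as for $\gamma=1$).

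\textbf{Second gap: the identification of the strain limit.} The normal component, which you denote $u^\lay_1$, lies only in $L^2(\omega)$; it is the $L^2(\omega)$ factor in the statement. So $y_1\nabla_{x'}u^\lay_1$ is not an element of $L^2(S\times\omega;\widetilde H^1_\#(Y_0))^3$ and cannot be what is absorbed into $e_y(u^{\lay,1})$.

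It is also false that tangential derivatives of the normal displacement enter only through $e_{12},e_{13}$. Write Griso's decomposition $\ueps=\mathcal U_\eps(x')+x_1\mathcal R_\eps(x')\wedge e_1+\bar u_\eps$. The in-plane strains then contain the bending term $x_1e_{x'}(\mathcal R_\eps\wedge e_1)$, with $\mathcal R_\eps\wedge e_1\approx-\nabla_{x'}\mathcal U_{\eps,1}$. Its unfolding $\eps y_1\TEps(e_{x'}(\mathcal R_\eps\wedge e_1))$ is bounded but not small in $L^2$, since $\|\nabla_{x'}\mathcal R_\eps\|_{L^2(\omega)}\le C\eps^{-3/2}\|e(\ueps)\|_{L^2(\OmEps^\lay)}\le C\eps^{-1}$.

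You must show two things:
\begin{itemize}
\item This bending term converges weakly to zero. It is bounded in $L^2$ and tends to zero in distributions, because $\eps\,\mathcal R_\eps\wedge e_1=-\eps\nabla_{x'}\mathcal U_{\eps,1}+O(\eps)$ with $\mathcal U_{\eps,1}$ bounded in $L^2(\omega)$.
\item The transverse shear $\nabla_{x'}\mathcal U_{\eps,1}+\mathcal R_\eps\wedge e_1$ is bounded even though neither summand is. Its weak limit $Z$ is what gets absorbed as $y_1Z(x')$ into the corrector.
\end{itemize}
The vanishing of the bending contribution is precisely what distinguishes the membrane limit here from the plate limit at $\gamma=-3$, which carries $-y_1\nabla^2_{x'}[u^\lay]_1$. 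Your sketch does not address it.

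In addition, your test fields $\Phi$ must satisfy $\Phi\nu=0$ on $S_Y^\pm$ as well, since $u^{\lay,1}$ is unconstrained there. The paper avoids all of this by quoting \cite[Lemma 3]{gahn2025b_effective}, which contains these arguments.
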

\begin{proof}
This result follows from \cite[Lemma 3]{gahn2025b_effective} (for which we need the vanishing Dirichlet boundary condition on $(- \frac{\eps}{2} , \frac{\eps}{2}) \times\partial \omega $ because the proof of this lemma also uses the Korn  inequality from Lemma \ref{lem:KornII}) and our a priori estimates). We emphasise that the weak convergence of the unfolded sequence $\TEps (\ueps)$ is equivalent to the (weak) two-scale convergence of $\ueps$ in the thin layer. The convergence of the gradient $\nabla_y \TEps(\ueps)$ follows since this sequence is bounded in $L^2$. 
\end{proof}
Finally, we give the result for $\gamma  =-3$, where we obtain a Kirchhoff--Love displacement as the limit function.
\begin{proposition}[The case $\gamma = -3$]\label{prop:compactness_layer_gamma_-3} For $\gamma = -3$ (and noting the additional assumption \ref{ass:zero_Dirichlet_BC_gamma-1-3} in this case), there exist
\begin{align*}
u^\lay &\in L^2(S;H^2_0(\omega))^3 \cap W^{1,\infty}(S;L^2(\omega))^3,
\\
\hat{u}^\lay &\in L^2(S;H_0^1(\omega))^2, 
\\
u^{\lay,2} &\in L^2(S\times \omega,\widetilde{H}_{\#}^1(Y_0)/\R)^3
\end{align*}
with $[u^\lay]_2 = [u^\lay]_3 = 0$, such that, up to a subsequence, 
\begin{align*}
\TEps(\ueps)_1 &\weak [u^\lay]_1 &\mbox{ weakly in }& L^2(S\times \omega \times Y_0),
\\
\frac{1}{\eps}[\TEps(\ueps)]_{\alpha} &\weak [\hat{u}^\lay]_{\alpha} &\mbox{ weakly in }& L^2(S\times \omega \times Y_0),
\\
\frac{1}{\eps^2} e_y(\TEps(\ueps)) = \frac{1}{\eps} \TEps(e(\ueps)) &\weak e_{x'}(\hat{u}^\lay) - y_3 \nabla_{x'}^2 [u^\lay]_1 + e_y(u^{\lay,2}) &\mbox{ weakly in }& L^2(S \times \omega \times Y_0)^{3\times 3},
\\
\TEps(\partial_t\ueps)_1 &\weak \partial_t [u^\lay_0]_1  &\mbox{ weakly in }& L^2(S\times \omega \times Y_0),
\end{align*}
where $\alpha = 2,3$ and we understand $\hat{u}^\lay$ as a function mapping into $\R^3$ by setting the first component equal to zero. 
Furthermore, the initial condition $[u^\lay]_1(0) = [u_0^\lay]_1$ holds.
\end{proposition}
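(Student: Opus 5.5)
The plan is to combine the a priori estimates of Theorem \ref{theo:existence_result_gen} for $\gamma=-3$ with the Korn inequality II (Lemma \ref{lem:KornII}) and the standard compactness results for thin perforated plates. The template is \cite[Theorem 6]{griso2020homogenization} and \cite[Lemma 3]{gahn2025b_effective} in the plate case. All statements are first obtained at fixed time and then extended to $L^2(S;\cdot)$ via the $L^\infty(S)$ bounds. Lemma \ref{lem:KornII} is applicable because Assumption \ref{ass:zero_Dirichlet_BC_gamma-1-3} gives $\ueps=0$ on the lateral boundary.

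\textbf{Step 1: bounds after unfolding.} Theorem \ref{theo:existence_result_gen} gives $\eps^{-3/2}\|e(\ueps)\|_{L^2(\OmEps^\lay)}\le C$, $\eps^{-1/2}\|\ueps\|_{W^{1,\infty}(S;L^2(\OmEps^\lay))}\le C$ and $\eps^{-3/2}\|\ueps^\alpha\|\le C$ for $\alpha=2,3$. By Proposition \ref{prop:properties_unfolding_operator} these become the following bounds in $L^2(S\times\omega\times Y_0)$:
\begin{itemize}
\item $\TEps(\ueps)$ and $\TEps(\partial_t\ueps)$ are bounded;
\item $\eps^{-1}\TEps(\ueps)_\alpha$ is bounded;
\item $\eps^{-1}\TEps(e(\ueps))=\eps^{-2}e_y(\TEps(\ueps))$ is bounded.
\end{itemize}
Weak limits therefore exist along a subsequence. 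The limit of $\TEps(\ueps)_\alpha$ vanishes, and the time-derivative limit is identified with $\partial_t$ of the displacement limit by testing against $\partial_t\phi$ and integrating by parts in time. The initial condition follows in the same way from (A5), using $\TEps(u^\lay_{0,\eps})\weak u^\lay_0$.

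\textbf{Step 2: plate decomposition.} The key tool is the Griso decomposition for thin perforated plates with connected matrix. Since $\overline{Y_1}\subset Y$, the layer $\OmEps^\lay$ is connected and each cell touches $S^\pm_\eps$. We write
\[
\ueps = \mathcal U^\eps_{KL} + \bar u_\eps ,\qquad \mathcal U^\eps_{KL}=\bigl(\mathcal U_1,\ \mathcal U_\alpha - x_1\partial_\alpha \mathcal U_1\bigr),
\]
where $\mathcal U_1\in H^2_0(\omega)$, $\mathcal U_\alpha\in H^1_0(\omega)$ are cell averages smoothed by $Q_1$-interpolation. The estimates are:
\begin{itemize}
\item $\|\nabla^2\mathcal U_1\|+\eps^{-1}\|e_{x'}(\mathcal U_\alpha)\|\le C\eps^{-3/2}\|e(\ueps)\|$ on $\omega$;
\item $\|\bar u_\eps\|+\eps\|\nabla\bar u_\eps\|\le C\eps\|e(\ueps)\|$ on the layer.
\end{itemize}
From these, $\mathcal U_1$ is bounded in $H^2_0(\omega)$ and $\eps^{-1}\mathcal U_\alpha$ is bounded in $H^1_0(\omega)$. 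The limits are $[u^\lay]_1\in H^2_0(\omega)$ and $\hat u^\lay\in H^1_0(\omega)^2$; the boundary conditions survive because the decomposition preserves the lateral Dirichlet condition.

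\textbf{Step 3: the strain limit.} Unfolding the decomposition gives
\[
\eps^{-1}\TEps(e(\ueps)) = e_{x'}\bigl(\eps^{-1}\mathcal U_\alpha\bigr) - y_1\nabla^2_{x'}\mathcal U_1 + \eps^{-1}\TEps(e(\bar u_\eps)) + o(1),
\]
using $x_1=\eps y_1$ and the continuity of translation for the $H^2$ and $H^1$ functions. (The statement writes $y_3$; this should be the normal variable, $y_1$ in our convention.) The remainder $\eps^{-2}e_y(\TEps(\bar u_\eps))$ is the symmetric gradient of $\eps^{-2}\TEps(\bar u_\eps)$, which is bounded in $L^2(\omega;H^1(Y_0))$ by Step 2. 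By the standard periodicity argument (differences of $\TEps$ at neighbouring cells), its limit is $e_y(u^{\lay,2})$ with $u^{\lay,2}$ $Y'$-periodic.

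\textbf{Main obstacle.} The hardest step is the decomposition in Step 2, namely uniform estimates in the perforated geometry. If it cannot be quoted from \cite{griso2020homogenization}, I would instead apply the Korn inequality on each cell $\eps(Y_0+k)$, extended to connected unions of neighbouring cells, to get cellwise rigid motions, and then interpolate these between cells.
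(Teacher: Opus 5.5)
Your route is essentially the paper's. The paper's proof just combines the bounds of Theorem~\ref{theo:existence_result_gen} with the thin-plate compactness theorem \cite[Theorem 3]{gahn2021two}, and your Steps 1--3 sketch how such a theorem is proved. The sketch has a genuine gap in Step 3, however.

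Continuity of translations only gives $\|\TEps(g)-g\|_{L^2(\omega\times Y_0)}\le C\eps\|\nabla_{x'}g\|_{L^2(\omega)}$. That handles $g=\mathcal U_1$, $g=\nabla_{x'}\mathcal U_1$ and $g=\eps^{-1}\mathcal U'$. It does not handle $\nabla^2_{x'}\mathcal U_1$ or $e_{x'}(\eps^{-1}\mathcal U')$, which are merely bounded in $L^2(\omega)$. For such sequences the unfolded limit generally carries periodic correctors:
$\TEps(\nabla^2_{x'}\mathcal U_1)\weak\nabla^2_{x'}[u^\lay]_1+\nabla^2_{y'}\hat U_1$ and $\TEps(e_{x'}(\eps^{-1}\mathcal U'))\weak e_{x'}(\hat u^\lay)+e_{y'}(\hat U')$,
with $\hat U_1\in L^2(S\times\omega;H^2_{\#}(Y'))$ and $\hat U'\in L^2(S\times\omega;H^1_{\#}(Y'))^2$. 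Nothing in your argument makes these vanish, so the ``$+o(1)$'' is unjustified.

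The fix is to keep the identity exact,
$\eps^{-1}\TEps(e(\ueps))=\TEps(e_{x'}(\eps^{-1}\mathcal U'))-y_1\TEps(\nabla^2_{x'}\mathcal U_1)+\eps^{-2}e_y(\TEps(\bar u_\eps))$,
and to use the missing observation that $e_{y'}(\hat U')-y_1\nabla^2_{y'}\hat U_1=e_y(w)$. Here $w=(\hat U_1,\hat U'-y_1\nabla_{y'}\hat U_1)\in H^1(Y_0)^3$ is $Y'$-periodic. Then $u^{\lay,2}$ is the limit of $\eps^{-2}\TEps(\bar u_\eps)$ plus $w$, not the warping limit alone. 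Only this gives the stated form with a $Y'$-periodic corrector.

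You also never check the second convergence, for $\eps^{-1}[\TEps(\ueps)]_\alpha$, and your own decomposition shows it cannot hold as written. Indeed,
$\eps^{-1}\TEps(\ueps)_\alpha=\TEps(\eps^{-1}\mathcal U_\alpha)-y_1\TEps(\partial_\alpha\mathcal U_1)+\eps^{-1}\TEps(\bar u_{\eps,\alpha})$.
Here translation continuity does apply, and the last term is $O(\eps)$. So the limit is $[\hat u^\lay]_\alpha-y_1\partial_\alpha[u^\lay]_1$, which differs from $[\hat u^\lay]_\alpha$ whenever $\nabla_{x'}[u^\lay]_1\neq 0$. The pure bending field $(\psi(x'),-x_1\nabla_{x'}\psi(x'))$ satisfies all your Step 1 bounds and exhibits this. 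This is a slip in the statement of the same kind as the $y_3/y_1$ one you caught; a proof should state the corrected limit rather than assert the given one.

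Two smaller points:
\begin{enumerate}
\item A $Q_1$ interpolant is only $W^{1,\infty}$, not $H^2$. You should either quote a decomposition whose bending part is genuinely $H^2$, or use the Reissner--Mindlin form (with $\nabla_{x'}\mathcal U_1$ close to an $H^1$ rotation field) and recover $[u^\lay]_1\in H^2_0(\omega)$ only in the limit.
\item Limits should be extracted in $L^2(S;\cdot)$ from time-integrated bounds, using that the decomposition is linear and acts for a.e.\ $t$. Extracting fixed-time subsequences does not assemble into a limit in $L^2(S;\cdot)$.
\end{enumerate}
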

\begin{proof}
This result follows from our a priori estimates using \cite[Theorem 3]{gahn2021two}.
\end{proof}

In the next Proposition, we formulate the interface conditions between the limits $u^{\pm}$ in the bulk regions and the limit functions $u^\lay$ from Proposition \ref{prop:compactness_layer_gamma_1} --\ref{prop:compactness_layer_gamma_-3} for the different choices of $\gamma$. 

\begin{proposition}\label{prop:interface_condition}
For the limit functions $u^{\pm}$ from Proposition \ref{prop:compactness_bulk_domains} and $u^\lay$ from Proposition \ref{prop:compactness_layer_gamma_1} -- \ref{prop:compactness_layer_gamma_-3}, respectively,  
\begin{align*}
    u^{\pm}|_{\omega} = u^\lay|_{S_Y^{\pm}} \qquad \mbox{ on } S \times \omega \times S_Y^{\pm}
\end{align*}
holds. In particular, for $\gamma = 1$ the trace of  $u^\lay$ is constant on $S_{Y}^{\pm}$ with respect to $y$; for $\gamma \in \{-1,-3\}$, (as $u^\lay$ is independent of $y$) we otbain the continuity of the displacements $u^+$ and $u^-$ across $\omega$:
\begin{align*}
    u^+|_{\omega} = u^-|_{\omega} \qquad\mbox{on } S \times \omega \quad\mbox{for } \gamma \in \{-1,-3\}.
\end{align*}
\end{proposition}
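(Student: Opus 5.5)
The plan is to identify the traces on the two sides of the interface via the unfolding operator. Continuity $u_\eps^\pm = u_\eps^\lay$ on $S_\eps^\pm$ will be transferred to the unfolded level, and then we pass to the limit on both sides.

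\textbf{Step 1: unfolding the trace identity.} Since $\overline{Y_1}\subset Y$, the faces $S_\eps^\pm$ are fully contained in $\partial\OmEps^\lay$. By the definition of $\TEps$, for a function $\phi\in H^1(\OmEps^\lay)$ we have pointwise a.e.
\[
\TEps(\phi)(x',y)=\phi\big(\pm\tfrac{\eps}{2},\eps[x'/\eps]+\eps y'\big)\quad\text{for } y\in S_Y^\pm .
\]
This is the unfolded version of the trace $\phi|_{S_\eps^\pm}$, i.e.\ the boundary unfolding operator $\TEps^{\pm}(\phi|_{S_\eps^\pm})$ on $\omega\times S_Y^\pm$. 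Hence, for every $\psi\in L^2(S\times\omega\times S_Y^\pm)^3$,
\[
\int_S\int_\omega\int_{S_Y^\pm}\TEps(\ueps)\cdot\psi \,\mathrm{d}\sigma(y)\,\mathrm{d}x'\,\mathrm{d}t
=\int_S\int_\omega\int_{S_Y^\pm}\TEps^\pm\big(\ueps|_{S_\eps^\pm}\big)\cdot\psi .
\]
Here $\ueps|_{S_\eps^\pm}$ coincides with the bulk trace $\ueps^\pm|_{S_\eps^\pm}$.

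\textbf{Step 2: the layer side.} In all three cases $\TEps(\ueps)$ converges weakly in $L^2(S\times\omega;H^1(Y_0))^3$. For $\gamma=-3$ this holds at least for the first component, and the other components are $O(\eps)$ by Theorem \ref{theo:existence_result_gen}, so they also converge (to $0=[u^\lay]_\alpha$) in $L^2(S\times\omega;H^1(Y_0))$; the bound on $\nabla_y\TEps(\ueps)=\eps\TEps(\nabla\ueps)$ comes from $\sqrt{\eps}\|\nabla\ueps\|_{L^2(\OmEps^\lay)}\le C$. By continuity of the trace $H^1(Y_0)\to L^2(S_Y^\pm)$, the left-hand side of the identity in Step 1 converges to the integral of $u^\lay|_{S_Y^\pm}\cdot\psi$.

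\textbf{Step 3: the bulk side (main obstacle).} We must show $\TEps^\pm(\ueps^\pm|_{S_\eps^\pm})\to u^\pm|_\omega$ in $L^2(S\times\omega\times S_Y^\pm)$. The boundary unfolding of a fixed function $v\in L^2(\omega)$ converges strongly to $v$, and the boundary unfolding is an isometry up to constants in $L^2$. It therefore suffices to know that $\ueps^\pm(\cdot,\pm\eps/2,\cdot)\to u^\pm|_\omega$ strongly in $L^2(S\times\omega)$; this strong convergence is noted in the proof of Proposition \ref{prop:compactness_bulk_domains}. For completeness, I would argue as follows: write $\ueps^\pm(\pm\eps/2,x')-\ueps^\pm(\pm\delta,x')$ via the fundamental theorem of calculus in $x_1$. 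This is bounded by $C\sqrt{\delta}\,\|\nabla\ueps\|_{L^2}$ uniformly in $\eps$, and the traces on the fixed planes $\{x_1=\pm\delta\}$ converge strongly by the strong $L^2$ convergence plus $H^1$ bounds (via Aubin–Lions type arguments as in \cite{Neuss:2007}). Then let $\delta\to0$.

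\textbf{Step 4: conclusion.} Equating the limits gives $u^\lay=u^\pm|_\omega$ on $S\times\omega\times S_Y^\pm$, which forces $u^\lay$ to be constant in $y$ there for $\gamma=1$. For $\gamma\in\{-1,-3\}$, $u^\lay$ is independent of $y$ (and for $\gamma=-3$ equals $[u^\lay]_1e_1$). Using the identity on both faces then gives $u^+|_\omega=u^\lay=u^-|_\omega$.
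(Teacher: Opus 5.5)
Your proposal is correct and follows the same route as the paper. The paper also argues that the layer bound $\eps^{-1/2}\|\ueps\|_{L^2(S\times\OmEps^\lay)}+\eps^{1/2}\|\nabla\ueps\|_{L^2(S\times\OmEps^\lay)}\le C$ gives weak convergence of $\TEps(\ueps)$ in $L^2(S\times\omega;H^1(Y_0))^3$, hence of its traces on $S_Y^\pm$. It then uses that the strong $L^2(S\times\omega)$ convergence of the bulk traces on $S_\eps^\pm$ carries over to their unfoldings, and that the continuity condition on $S_\eps^\pm$ identifies the two limits. The only differences are small. You sketch the strong trace convergence via the $\sqrt{\delta}$ estimate and fixed planes, which the paper just cites from \cite{Neuss:2007}. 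Your hedge for $\gamma=-3$ is also unnecessary, since the bound above holds for all components and all $\gamma$.
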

\begin{proof}
Similar results can be found for example in \cite{gahn2022derivation,gahn2025effective,Neuss:2007}. However, for the sake of completeness, we briefly sketch the idea of the proof which is the same for all choices of $\gamma$. From the a priori estimates in Theorem \ref{theo:existence_result_gen}, we have 
\begin{align*}
    \frac{1}{\sqrt{\eps}} \|\ueps\|_{L^2(S \times \OmEps^\lay)} + \sqrt{\eps} \|\nabla \ueps\|_{L^2(S\times \OmEps^\lay)} \le C.
\end{align*}
This implies (as already obtained in Proposition \ref{prop:compactness_layer_gamma_1} in the case $\gamma =1$) that 
\begin{align*}
\TEps(\ueps) \weak u^\lay \qquad\mbox{ weakly in } L^2(S\times \omega , H^1(Y_0))^3.
\end{align*}
In particular, we get $\TEps(\ueps)|_{S_Y^{\pm}} \weak u^\lay|_{S_Y^{\pm}} $ weakly in $L^2(S \times \omega \times S_{Y}^{\pm})^3.$ Further, the strong convergence of $\ueps|_{S_{\eps}^{\pm}}$ in $L^2(S\times \omega)^3$ implies the convergence of the associated unfolded sequence in $L^2(S \times \omega \times Y')^3$ to the same limit. Now, the desired result follows easily from the interface condition on $S_{\eps}^{\pm}$.
\end{proof}

We emphasise that, for $\gamma = -3$, we have $u^{\pm}_2 = u^{\pm}_3 = 0$ on $\omega.$ Further, in the case $\gamma = -1$, we immediately obtain the improved regularity $u^\lay_3 \in L^2(S;H^{\frac12}(\omega))$ by the trace theorem.

\section{Identification of the limit problems}\label{sec:identification}

In this section, we pass to the limit $\eps \to 0$ in the microscopic problem to obtain the macroscopic limit models stated in Section \ref{sec:mainresults} satisfied by the limit functions obtained in the previous section. The three different cases $\gamma \in \{1,-1,-3\}$ are considered separately. As the compactness results for the sequences in the bulk from Proposition \ref{prop:compactness_bulk_domains} are the same in all cases, only the convergences in the layer are different, which leads to different effective interface conditions across $\omega$.

\subsection{The case $\gamma  = 1$}

We start with the case $\gamma = 1$. This can be considered a critical case, because the two-scale limit $u^\lay$ in the thin layer depends on both the macroscopic variable $x'$ and the microscopic variable $y$. In a first step, we derive the two-scale homogenised model $\eqref{eq:macro_model_gamma1_strong}$. Since the limit displacement $u^\lay$ depends additionally on the microscopic variable, the numerical approximation of the solution to this problem is expensive (since a PDE on $Y_0$ has to be solved in every macroscopic point $x' \in \omega$). Therefore, in a second step, we replace the equation for $u^\lay$ by effective interface conditions. For this, we extract cell problems from the two-scale homogenised problem, which can be solved independently, to write this problem as a macroscopic equation dependent on $x$ only, cf.~\eqref{eq:macro_model_gamma1_effective}.

In the following theorem, we derive the two-scale homogenised problem which already includes all the information of the macroscopic model. For convenience, we first recall the definition of the solution space $\spaceH_1$, see \eqref{def:spaceH_1},
\begin{multline*}
\spaceH_1= \bigg\{(v^+,v^\lay,v^-) \in H^1_{\GamD\cap\partial\Omega^+}(\Omega^+)^3  \times  L^2(\omega,\widetilde{H}^1_{\#}(Y_0))^3 \times H^1_{\GamD\cap\partial\Omega^-}(\Omega^-)^3  \, | \\ \,
 v^{\pm} = v^\lay \mbox{ on } \omega \times S_Y^{\pm}  \bigg\}.
\end{multline*}

\begin{theorem}\label{thm:main_convergence_two_scale_model}
Let $\gamma  = 1$ and  $\ueps $ be the weak solution of the microscopic equation \eqref{eq:weak_formulation}. Then, the triple of limit functions $(u^+,u^\lay,u^-) \in L^2(S,\spaceH_1)$ from Propositions \ref{prop:compactness_bulk_domains} and \ref{prop:compactness_layer_gamma_1} is the unique solution to the macroscopic problem $\eqref{eq:macro_model_gamma1_strong}$.
\end{theorem}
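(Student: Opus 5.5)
We pass to the limit in the weak formulation \eqref{eq:weak_formulation} using test functions adapted to $\spaceH_1$, and then invoke uniqueness of the limit problem to get convergence of the whole sequence.

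\textbf{Choice of test functions.} Let $v=(v^+,v^\lay,v^-)$ be smooth in time with $v(T)=0$. We take $v^\pm\in C^\infty([0,T];H^1_{\GamD\cap\partial\Omega^\pm}(\Omega^\pm))^3$ and $v^\lay\in C^\infty([0,T];C^1(\overline\omega;C^1_\#(\overline{Y_0})))^3$, with $v^\pm=v^\lay$ on $\omega\times S_Y^\pm$. Functions of this type should be dense in the admissible test space of \eqref{eq:macro_model_two_scale_gamma_1}; this density is proved by smoothing $v^\lay$ in $x'$ and correcting the trace by a lifting of $v^\pm|_\omega$.

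\textbf{Microscopic test function.} We build $v_\eps$ by setting $v_\eps(t,x)=v^\lay(t,x',x/\eps)$ in $\OmEps^\lay$ (periodic in $y'$). In $\OmEps^\pm$ we set $v_\eps(t,x)=v^\pm(t,x\mp\frac\eps2 e_1)$, shifted so that the traces match on $S_\eps^\pm$. The shift is exact provided $v^\pm|_\omega$ is constant in $y$, which it is since $v^\lay$ equals $v^\pm$ on $S_Y^\pm$. The shift has to be extended near $\pm L$, e.g.\ by cutoff or by a stretching map $(\eps/2,L)\to(0,L)$.

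To keep $v_\eps\in H^1_{\GamD}$, we take $v^\pm$ vanishing near $\GamD$ and near the lateral layer boundary; this is possible since $\gamma=1$ imposes no lateral Dirichlet condition, and the assumption on $g$ handles the Neumann part. The resulting $v_\eps$ is admissible in \eqref{eq:weak_formulation}.

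\textbf{Passage to the limit, term by term.}
\begin{itemize}
\item Bulk terms: by Proposition \ref{prop:compactness_bulk_domains} and the strong convergence of the shifted $v^\pm$ and $\nabla v^\pm$, the bulk integrals converge to those in \eqref{eq:macro_model_two_scale_gamma_1}.
\item Layer inertia: by Proposition \ref{prop:properties_unfolding_operator}(i), $\frac1\eps\int_{\OmEps^\lay}\varrho^\lay_\eps\partial_t\ueps\cdot\partial_t v_\eps=\int_{\omega\times Y_0}\TEps(\varrho^\lay_\eps)\TEps(\partial_t\ueps)\cdot\TEps(\partial_t v_\eps)$. Since $\TEps(\varrho_\eps^\lay)$ and $\TEps(\partial_tv_\eps)$ converge strongly (by continuity in $x'$, (A1)--(A2)), this converges to $\int\varrho^\lay\partial_tu^\lay\cdot\partial_tv^\lay$ by Proposition \ref{prop:compactness_layer_gamma_1}.
\item Layer stiffness: with $\gamma=1$, Proposition \ref{prop:properties_unfolding_operator}(iii) gives $\eps\int_{\OmEps^\lay}A^\lay_\eps e(\ueps):e(v_\eps)=\int\TEps(A_\eps^\lay)\,e_y(\TEps\ueps):\bigl(e_y(v^\lay)+\eps\,\TEps(e_{x'}v^\lay)\bigr)$. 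This tends to $\int A^\lay e_y(u^\lay):e_y(v^\lay)$, using weak convergence of $\nabla_y\TEps(\ueps)$ in $L^2$.
\item Right-hand sides: these converge by (A3)--(A6). The initial-data layer term becomes $\int\varrho^\lay u_1^\lay\cdot v^\lay(0)$.
\end{itemize}
The interface condition $(u^+,u^\lay,u^-)\in L^2(S,\spaceH_1)$ is Proposition \ref{prop:interface_condition}, and the initial condition $u^\lay(0)=u^\lay_0$ is Proposition \ref{prop:compactness_layer_gamma_1}. Together these show that the limit triple solves \eqref{eq:macro_model_two_scale_gamma_1}.

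\textbf{Uniqueness.} We prove uniqueness by the classical Ladyzhenskaya trick for weak wave solutions (since $\partial_t u$ itself is not an admissible test function). Take zero data and the test function $v(t)=\int_t^{s}u(\tau)\,d\tau$ for $t<s$, and $v=0$ otherwise. This gives an energy identity for $\|\sqrt\varrho\, u(s)\|^2+\|e(w(0))\|^2$, where $w(0)=\int_0^s u$.

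Coercivity on $\spaceH_1$ requires a Korn-type inequality bounding $\|v^\lay\|_{L^2(\omega,H^1(Y_0))}$ by $\|e_y v^\lay\|$ plus the traces $v^\pm|_\omega$, together with the bulk Korn inequality. This Korn step is the point needing care when $\GamD$ touches only one side. Gronwall then gives $u=0$, and uniqueness implies convergence of the whole sequence.

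\textbf{Main obstacle.} The delicate step is constructing admissible microscopic test functions with exactly matching traces on $S^\pm_\eps$ while respecting $\GamD$, together with the density of such smooth triples in $\spaceH_1$. If the shift construction proves awkward, I will first try an additive corrector instead: extend $v^\pm$ to $\Omega$ and add a layer-localised term with vanishing traces.
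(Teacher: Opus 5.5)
Your proposal is correct and follows essentially the paper's route: glue bulk test functions to the oscillating layer function $v^\lay(t,x',x/\eps)$, unfold, pass to the limit by weak--strong convergence, and prove uniqueness with the test function $\int_t^s u\,\mathrm{d}\tau$. Two remarks: your shift $x\mp\frac{\eps}{2}e_1$ repairs a trace mismatch on $S_\eps^\pm$ that the paper's unshifted choice $v^\eps=v^\pm$ on $\OmEps^\pm$ passes over, whereas the Korn/Gronwall step in your uniqueness argument is superfluous, since the energy identity $\tfrac12\|\sqrt{\varrho}\,u(s)\|^2+\tfrac12\int A\,e(w(0)):e(w(0))=0$ (summed over bulk and layer, with $e_y$ in the layer) already forces $u(s)=0$, which is exactly how the paper concludes.
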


\begin{proof}
First of all, we note that from Proposition \ref{prop:interface_condition} we get that $(u^+,u^\lay,u^-) \in L^2(S,\spaceH_1).$ Now, we choose the following structure of the test function in the weak equation $\eqref{eq:weak_formulation}$:  
\begin{align*}
 v^\eps(t,x)=\begin{cases}
        v^\pm(t,x), &(t,x)\in S\times\OmEps^\pm,\\
        v^\lay \left(t,x',\frac{x}{\eps}\right), &(t,x)\in S\times\OmEps^\lay,
    \end{cases}
\end{align*}
where $v^\pm\in L^2(S,H^1_{\GamD\cap\partial\Omega^\pm}(\Omega^\pm))$ with $\partial_t v^\pm\in L^2(S\times\Omega^\pm)$, $v^\pm(T)=0$ and $v^\lay\in L^2(S,C^\infty_0(\omega,\widetilde{H}^1_{\#}(Y_0)))$ with $\partial_tv^\lay\in L^2(S,C^\infty_0(\omega,L^2(Y_0)))$,  $v^\lay(T)=0$ such that $v^\pm|_{\omega}=v^\lay|_{\omega \times S_Y^{\pm}}$ on $ S\times\omega\times Y'$.  From the unfolding method, see for example \cite{Cior:2018}, it is well-known (and easy to check) that, for a function $\phi^\lay_{\eps}(t,x):= \phi^\lay\left(x',\frac{x}{\eps}\right)$ with $\phi^\lay \in C^0(\overline{\omega},\widetilde{H}^1_{\#}(Y_0))$,  $\TEps(\phi^\lay_{\eps} ) \rightarrow \phi^\lay$ in $L^2(\omega \times Y_0)$ holds. In particular, we get (as the time variable only acts as a parameter)
\begin{align*}
\TEps(v^\eps)&\rightarrow v^\lay &\mbox{ strongly in }& L^2(S\times\omega;H^1(Y_0))^3,
\\
e_y(\TEps(v^{\eps})) = \eps \TEps(e(v^{\eps})) &\rightarrow e_y(v^\lay) &\mbox{ strongly in }& L^2(S\times \omega \times Y_0)^{3\times 3},
\end{align*}
and similarly for the time derivative,
\begin{align*}
\TEps(\partial_t v^\eps)&\rightarrow\partial_tv^\lay &\mbox{ strongly in }& L^2(S\times\omega\times Y_0)^3.
\end{align*}
We rewrite the weak formulation \eqref{eq:weak_formulation} using the test functions from above and the periodic unfolding operator to obtain
\begin{align*}
	-&\sum_{\pm} \left\{\int_0^T\int_{\Omega^\pm} \varrho^\pm\chi_{\OmEps^\pm}\partial_t \ueps\cdot\partial_t v^\pm\,\x\t - \int_0^T\int_{\Omega^\pm} A^\pm\chi_{\OmEps^\pm}e(\ueps):e(v^\pm)\,\x\t \right\}\\
    &-\int_0^T\int_{\omega\times Y_0} \TEps(\varrho^\lay_\eps)\TEps(\partial_t \ueps)\cdot\TEps(\partial_t v^\eps)\,\x'\y\t\\
    &+\eps^2\int_0^T\int_{\omega\times Y_0} \TEps(A^\lay_\eps)\TEps(e(\ueps)):\TEps(e(v^\eps))\,\x'\y\t\\
	=& \sum_{\pm}\bigg\{\int_0^T\int_{\Omega^\pm} \varrho^\pm\chi_{\OmEps^\pm} f^\pm\cdot v^\pm\,\x\t +\int_0^T\int_{\GamN\cap\partial\Omega^\pm} g\cdot v^\pm\,\mathrm{d}\sigma(x)\t \\
    &+\int_{\Omega^\pm}\varrho^\pm\chi_{\OmEps^\pm}u^\pm_1\cdot v^\pm(0,x)\,\x \bigg\}+\int_0^T\int_{\omega\times Y_0} \TEps(\varrho^\lay_\eps)\TEps(f^\lay_\eps)\cdot\TEps(v^\eps)\,\x'\y\t \\
    &+\int_{\omega\times Y_0} \TEps(\varrho^\lay_\eps)\TEps(u^\lay_{1,\eps})\cdot\TEps(v^\eps(0,x))\,\x'\y.
\end{align*}
Passing to the limit $\eps\to 0$, we obtain \eqref{eq:macro_model_two_scale_gamma_1}.

It remains to establish uniqueness of the solution. For this purpose, assume there exist two weak solutions $u_\mathrm{a}:=(u^+_\mathrm{a},u^\lay_\mathrm{a},u^-_\mathrm{a})$ and $u_\mathrm{b}:=(u^+_\mathrm{b},u^\lay_\mathrm{b},u^-_\mathrm{b})$ and define $(u^+,u^\lay,u^-)\coloneqq (u^+_\mathrm{b}-u^+_\mathrm{a},u^\lay_\mathrm{b}-u^\lay_\mathrm{a},u^-_\mathrm{b}-u^-_\mathrm{a})$. Due to linearity, $u^\pm(0,x)=0$ in $\Omega^\pm$, $u^\lay(0,x',y)=0$ in $\omega\times Y_0$, $u^\pm(t,0,x')=u^\lay(t,x',\pm\frac{1}{2},y')$ for a.e.~$(t,x',y')\in S\times\omega\times Y'$ and
\begin{equation}\label{eq:uniqueness}
\begin{split}
	-&\sum_{\pm}\int_0^T\int_{\Omega^\pm} \varrho^\pm\partial_t u^\pm\cdot \partial_t v^\pm\,\x\t - \int_0^T\int_{\Omega^\pm} A^\pm(x)e(u^\pm):e(v^\pm)\,\x\t \\
    -&\int_0^T\int_{\omega\times Y_0} \varrho^\lay\partial_t u^\lay\cdot \partial_t v^\lay\,\x'\y\t +\int_0^T\int_{\omega\times Y_0} A^\lay(x)e_y(u^\lay):e_y(v^\lay)\,\x'\y\t=0
\end{split}
\end{equation}
for all $v:=(v^+,v^\lay,v^-) \in L^2(S,\spaceH_1)$ with $v(T) = 0$,  $\partial_t v^\pm\in L^2(S\times\Omega^\pm)^3$ and $\partial_tv^\lay\in L^2(S\times\omega\times Y_0)^3$. Now, let $0\leq s\leq T$ and 
\begin{align*}
  v^\pm(t,x) \coloneqq-\int_t^T\chi_s(\tau)u^\pm(\tau,x)\,\mathrm{d}\tau, \quad  v^\lay(t,x',y) \coloneqq-\int_t^T\chi_s(\tau)u^\lay(\tau,x',y)\,\mathrm{d}\tau, 
\end{align*} 
where $\chi_s$  is the characteristic function of the interval $[0,s]$.  
Then, $v$ is an admissible test function, $v^\pm(t,\cdot)=v^\pm(T,\cdot)=v^\lay(t,\cdot,\cdot)=v^\lay(T,\cdot,\cdot)=0$ for $s\leq t\leq T$, 
$v$ is absolutely continuous in $[0,T]$, $\partial_t v^\pm(t,x)=\chi_s(t)u^\pm(t,x)$ a.e.~in $S\times\Omega$ and $\partial_t v^\lay(t,x',y)=\chi_s(t)u^\lay(t,x',y)$ a.e.~in $S\times\omega\times Y_0$. Using this test function in \eqref{eq:uniqueness}, we obtain by standard arguments
\begin{align*}
	0=&-\sum_{\pm}\int_0^s\int_{\Omega^\pm}\varrho^\pm\partial_t u^\pm\cdot u^\pm\,\x\t+
    \frac{1}{2}\int_{\Omega^\pm} A^\pm e(v^\pm)(0,x):e(v^\pm)(0,x)\,\x\\
    &-\int_0^s\int_{\omega\times Y_0}\varrho^\lay\partial_t u^\lay\cdot u^\lay\,\x'\y\t-\frac{1}{2}\int_{\omega\times Y_0} A^\lay e_y(v^\lay)(0,x',y):e_y(v^\lay)(0,x',y)\,\x'\y,
\end{align*}
where we have used the symmetry of $A^\pm,A^\lay$. The coercivity of $A^\pm,A^\lay$ yields
\begin{align*}
	0\geq & \sum_{\pm}\frac{1}{2}\varrho_0\norm{u^\pm(s,\cdot)}^2_{[L^2(\Omega^\pm)]^3}+\frac{\alpha}{2}\norm{v^\pm(0,\cdot)}^2_{H^1(\Omega^\pm)}+\frac{1}{2}\varrho_0\norm{u^\lay(s,\cdot,\cdot)}^2_{[L^2(\omega\times Y_0)]^3}\\
    &+\frac{\alpha}{2}\norm{v^\lay(0,\cdot,\cdot)}^2_{L^2(\omega,H^1(Y_0))}
\end{align*}
for a.e.~$s\in S$. Thus, $u_\mathrm{a}=u_\mathrm{b}$ a.e.
\end{proof}

From the weak equation $\eqref{eq:macro_model_two_scale_gamma_1}$, we immediately obtain a representation of the jumps of the displacement and the stresses across $\omega$. For this, we first introduce an auxiliary problem: 
Letting
\begin{align*}
V\coloneqq\{\varphi\in H^1(Y_0)^3\,|\, \varphi\text{ periodic in }Y', \varphi\equiv \text{const.~on }S^+_Y\cup S^-_Y\},
\end{align*}
find $\eta^{(k)}\in V$, $k\in\{1,2,3\}$, with $\int_{Y_0}\eta^{(k)}(y)\,\y =0$ such that
\begin{align}\label{cell_problem_eta^k_jump_weak}
\int_{Y_0} A^\lay e(\eta^{(k)}):e(\varphi)\,\y = \int_{S^+_Y}\varphi_k\,\Sy-\int_{S^-_Y}\varphi_k\,\Sy
\end{align}
for all $\varphi\in V$. 
We immediately see that this problem has a unique solution for each $k$ up to rigid-body motions applying Korn's inequality from \cite[Theorem 2.5]{Olei:1992} and the Lax--Milgram Theorem. Noting that functions in $V$ are constant on $S^+_Y\cup S^-_Y$, only rigid-body motions which are (arbitrary) translations are actually possible. Further noting that $\eta^{(k)}$ has vanishing average, this translation is uniquely defined. Thus, the auxiliary problem has a unique solution without any further constraints, the strong formulation of which is given by
\begin{align}
\begin{aligned}\label{cell_problem_eta^k_jump}
		-\nabla\cdot (A^\lay e(\eta^{(k)})) &= 0 &\text{in } Y_0,\\
        -(A^\lay e(\eta^{(k)}))n&=0&\text{on }\partial Y_1,\\
        \int_{S^\pm_Y} A^\lay e(\eta^{(k)})n\,\Sy &= \pm e_k,\\
        \frac{1}{\abs{Y_0}}\int_{Y_0}\eta^{(k)}(y)\,\y &=0, \\
        \eta^{(k)} \mbox{ constant on } S_Y^{\pm,}
\end{aligned}
\end{align}
where $e_k$ is the $k$-th unit vector. 
\begin{proposition}\label{prop:jump_conditions_gamma_1}
The transmission conditions on the interface $\omega$ can be expressed as
\begin{align*}
 u^+_k(t,0,x')-u^-_k(t,0,x') = \int_{Y_0} A^\lay e_y(\eta^{(k)}) : e_y(u^\lay) \,\y 
\end{align*}  
    for $k\in\{1,2,3\}$ and, in the distributional sense,
\begin{align*}
 (A^{+}e(u^{+})n - A^{-}e(u^{-})n)(t,0,x') = \int_{Y_0} (\varrho^\lay f^\lay - \partial_t(\varrho^\lay\partial_t u^\lay))\,\y,
\end{align*}
    where $n$ is the normal from $\Omega^+$ to $\Omega^-$, i.e.~$n=(-1,0,0)$. 
\end{proposition}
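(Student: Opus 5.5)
The plan is to obtain both identities by testing the two-scale weak formulation \eqref{eq:two_scale_macro_gamma1_aux} (equivalently \eqref{eq:macro_model_two_scale_gamma_1}) with suitably chosen test triples in $\spaceH_1$, and then reading off the interface quantities.

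\textbf{Displacement jump.} Fix $k$ and a scalar $\psi\in C_0^\infty(\omega)$, and take $v=(0,\psi(x')\eta^{(k)}(y),0)$. This is not admissible, since $\eta^{(k)}$ is constant but generally nonzero on $S_Y^\pm$. I would therefore argue pointwise in $x'$, directly from the cell problem. For a.e.\ $(t,x')$, the function $u^\lay(t,x',\cdot)$ lies in $V$: it is $Y'$-periodic, and by Proposition \ref{prop:interface_condition} it is constant on $S_Y^\pm$ with values $u^\pm(t,0,x')$. Testing \eqref{cell_problem_eta^k_jump_weak} with $\varphi=u^\lay(t,x',\cdot)$ gives
\begin{align*}
\int_{Y_0}A^\lay e_y(\eta^{(k)}):e_y(u^\lay)\,\y=\int_{S_Y^+}u^\lay_k\,\Sy-\int_{S_Y^-}u^\lay_k\,\Sy=u^+_k(t,0,x')-u^-_k(t,0,x'),
\end{align*}
because $|S_Y^\pm|=1$. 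The symmetry of $A^\lay$ lets us write the integrand in the order stated. One subtlety remains. $A^\lay$ depends on $x'$, so $\eta^{(k)}=\eta^{(k)}(x',y)$, and measurability in $x'$ follows from the continuity of $A^\lay$ in $x'$ together with Lax--Milgram stability. This part is essentially immediate.

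\textbf{Stress jump.} Choose test functions that are rigid translations across the layer. Take $\phi\in C_0^\infty(S\times\Omega)^3$ supported away from $\partial\Omega$ (so that $g$ and $\Gamma_D$ play no role), and set $v^\pm=\phi|_{\Omega^\pm}$ and $v^\lay(t,x',y)=\phi(t,0,x')$, constant in $y$. Then $v\in\spaceH_1$ and $e_y(v^\lay)=0$, so the layer stiffness term drops. The weak formulation reduces to
\begin{align*}
\sum_\pm\Big(\langle\partial_t(\varrho^\pm\partial_t u^\pm),\phi\rangle+\int_{\Omega^\pm}A^\pm e(u^\pm):e(\phi)\,\x-\int_{\Omega^\pm}\varrho^\pm f^\pm\cdot\phi\,\x\Big)
=\int_\omega\int_{Y_0}\big(\varrho^\lay f^\lay-\partial_t(\varrho^\lay\partial_t u^\lay)\big)\,\y\cdot\phi(0,x')\,\mathrm{d}x'.
\end{align*}
Taking $\phi$ supported inside $\Omega^\pm$ recovers the bulk equation \eqref{eq:two_scale_model_gamma1_PDE_pm}. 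With that equation in hand, the left-hand side equals $\int_\omega\sum_\pm A^\pm e(u^\pm)\nu^\pm\cdot\phi\,\mathrm{d}x'$ in the sense of distributions/generalized traces. Here $\nu^+=-e_1=n$ and $\nu^-=e_1=-n$, so this is $(A^+e(u^+)n-A^-e(u^-)n)$ tested against $\phi$. Since $\phi(0,\cdot)$ is arbitrary, the second identity follows in the distributional sense.

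\textbf{Main obstacle.} The delicate step is the rigorous meaning of the normal traces $A^\pm e(u^\pm)\nu$ on $\omega$ and of $\int_{Y_0}\partial_t(\varrho^\lay\partial_tu^\lay)\,\y$. I would handle both by defining them through the generalized Green formula as elements of $H^{-1}(S;H^{-1/2}_{00}(\omega))$-type dual spaces, using $\partial_t(\varrho\partial_t u)\in L^2(S,\spaceH_1')$ from the preceding remark. The $Y_0$-average of $\partial_t(\varrho^\lay\partial_tu^\lay)$ is then understood as the action of this functional on $y$-constant test functions. The identity is then exactly the tested equality above.
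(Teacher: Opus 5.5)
Your proposal is correct and follows the paper's own route. The displacement jump comes from testing the cell problem \eqref{cell_problem_eta^k_jump_weak} with $u^\lay(t,x',\cdot)\in V$, using $|S_Y^\pm|=1$. The stress jump comes from testing \eqref{eq:two_scale_macro_gamma1_aux} with $(\phi|_{\Omega^+},\phi|_{\omega},\phi|_{\Omega^-})$, for which $e_y(\phi|_\omega)=0$, and then integrating by parts with $\nu^\pm=\pm n$. Your generalized-Green-formula interpretation of the normal traces (as distributions in time) makes precise what the paper's ``(formally) by parts'' leaves informal.
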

\begin{proof}
The first equation is obtained directly by testing the equation for $\eta^{(k)}$ with $u^\lay$. For the second identity, we choose $\phi \in H_0^1(\Omega)^3$ and put $\phi^{\pm} := \phi|_{\Omega^{\pm}}$ and $\phi^\lay := \phi|_{\omega}$. Now, choosing $(\phi^+,\phi^\lay,\phi^-) \in \spaceH_1$ as a test function in equation $\eqref{eq:two_scale_macro_gamma1_aux}$ and integrating (formally) by parts gives the result.
\end{proof}

\subsubsection{Effective interface conditions}
\label{sec:effective_interface_conditions}
Our aim now is to derive a macroscopic model in terms of the bulk displacements $(u^+,u^-)$ only. Until now, the jump conditions across $\omega$ for the displacement and the normal stress depend on the function $u^\lay$ (as well as the two-scale homogenised model in Theorem \ref{thm:main_convergence_two_scale_model}). In what follows, we express the function $u^\lay$ via the traces of $u^{\pm}$ on $\omega$ and suitable cell problems. This allows to formulate a macromodel for $(u^+,u^-)$ with effective interface conditions across $\omega$ independent of $u^\lay$.

For this result, we make the following additional assumptions:
\begin{enumerate}
[label = (A\arabic*$^{\prime}$)]
\item\label{ass:additional_solution_trace} Improved regularity of the bulk solution: $\partial_t u^{\pm}|_{\omega} \in H^1(S;L^2(\omega))^3.$
\item\label{ass:additional_initial_value} Compatibility of the initial value $u_0^\lay$: 
\begin{align*}
-\nabla_y \cdot(A^\lay e_y(u_0^\lay)) &= 0 &\mbox{ in }& \omega \times Y_0,
\\
- A^\lay e_y(u_0^\lay)\nu &= 0 &\mbox{ on }& \omega \times \partial Y_1,
\\
u_0^\lay|_{S_{Y}^{\pm}} &= u_0^\pm|_{\omega} &\mbox{ on }& \omega \times S_Y^{\pm},
\\
u_0^\lay \mbox{ is }& Y'\mbox{-periodic}.
\end{align*}
\end{enumerate}

\begin{remark}
While condition \ref{ass:additional_initial_value} is an assumption on the data, the condition \ref{ass:additional_solution_trace} is an assumption on the solution itself, which is, of course, not known a priori. However, (under reasonable assumptions on the data) we expect it is possible to obtain this regularity from the two-scale homogenised model $\eqref{eq:macro_model_two_scale_gamma_1}$. 
\end{remark}

In order to proceed, we write $u^\lay$ in the form
\begin{align}\label{eq:decomposition_u^M}
    u^\lay = u_0^\lay + \tilde{u}_1^\lay + u_f^\lay + w^\lay,
\end{align}
where the functions $\tilde{u}_1^\lay$, $u_f^\lay$ and $w^\lay$ solve the following cell problems (we refer to Appendix \ref{sec:appendix_wave_equation} for more details on weak solutions to such cell problems):
\begin{itemize}
\item $\tilde{u}_1^\lay$ is the unique weak solution to  
\begin{align}
\begin{aligned}\label{eq:cell_problem_tilde_u1^M}
\partial_t (\varrho^\lay \partial_t \tilde{u}_1^\lay) - \nabla_y \cdot(A^\lay e_y(\tilde{u}_1^\lay)) &= 0 &\mbox{ in }& S \times \omega \times Y_0,
\\
- A^\lay e_y(\tilde{u}_1^\lay)\nu &= 0 &\mbox{ on }& S \times \omega \times \partial Y_1,
\\
\tilde{u}_1^\lay|_{S_{Y}^{\pm}} &= 0 &\mbox{ on }& S \times \omega \times S_Y^{\pm},
\\
\tilde{u}_1^\lay(0) &= 0 &\mbox{ in }& \omega \times Y_0,
\\
\partial_t \tilde{u}_1^\lay (0) &= u_1^\lay &\mbox{ in }& \omega \times Y_0,
\\
\tilde{u}_1^\lay \mbox{ is } Y'\mbox{-periodic}.
\end{aligned}
\end{align}

\item $u_f^\lay$ is the unique weak solution to 
\begin{align*}
\partial_t (\varrho^\lay \partial_t u_f^\lay) - \nabla_y \cdot(A^\lay e_y(u_f^\lay)) &= \varrho^\lay f^\lay &\mbox{ in }& S \times \omega \times Y_0,
\\
- A^\lay e_y(u_f^\lay)\nu &= 0 &\mbox{ on }& S \times \omega \times \partial Y_1,
\\
u_f^\lay|_{S_{Y}^{\pm}} &=0 &\mbox{ on }& S \times \omega \times S_Y^{\pm},
\\
u_f^\lay(0) &= 0 &\mbox{ in }& \omega \times Y_0,
\\
\partial_t u_f^\lay (0) &= 0 &\mbox{ in }& \omega \times Y_0,
\\
u_f^\lay \mbox{ is } Y'\mbox{-periodic}.
\end{align*}

\item $w^\lay$ is the unique weak solution to 
\begin{align*}
\partial_t (\varrho^\lay \partial_t w^\lay) - \nabla_y \cdot(A^\lay e_y(w^\lay)) &= 0&\mbox{ in }& S \times \omega \times Y_0,
\\
- A^\lay e_y(w^\lay)\nu &= 0 &\mbox{ on }& S \times \omega \times \partial Y_1,
\\
w^\lay|_{S_{Y}^{\pm}} &= u^{\pm}|_{\omega} - u_0^{\pm}|_{\omega} &\mbox{ on }& S \times \omega \times S_Y^{\pm},
\\
w^\lay(0) &= 0 &\mbox{ in }& \omega \times Y_0,
\\
\partial_t w^\lay (0) &= 0 &\mbox{ in }& \omega \times Y_0,
\\
w^\lay \mbox{ is }& Y'\mbox{-periodic}.
\end{align*}
\end{itemize}
The idea behind this decomposition is that each term on the right-hand side of $\eqref{eq:decomposition_u^M}$ is associated with an initial condition ($u_0^\lay$ and $\tilde{u}_1^\lay$), a force term ($u_f^\lay$) or the coupling to the bulk solutions via the boundary conditions ($w^\lay$). The most interesting and critical term seems to be the function $w^\lay$ which describes the coupling to the bulk domains. Owing to the linear character of the equation for $w^\lay$, we can express $w^\lay$ via suitable cell problems (not depending on the bulk solutions) and coefficients depending on $u_0^{\pm}$. More precisely, we have, using the uniqueness of the problem,
\begin{align*}
w^\lay(t,x',y):=& \sum_{\pm}\sum_{i=1}^3 \bigg\{\int_0^t \partial_t u_i^{\pm} (s,0,x') \chi_i^{\pm}(t-s,x',y) \s + \int_0^t \partial_{tt} u_i^{\pm}(s,0,x') \eta_i^{\pm}(t-s,x',y)\s\\
&+ \partial_t u_i^{\pm}(0,0,x') \eta_i^{\pm}(t,x',y)\bigg\},
\end{align*}
where $\chi_i^{\pm}$ and $\eta_i^{\pm}$  solve the following cell problems for a function $\phi_i^{\pm} \in H^1(Y_0)^3$ which is $Y'$-periodic and fulfils $\phi_i^{\pm} = e_i$ on $S_Y^{\pm}$ and $\phi_i^{\pm} = 0$ on $S_Y^{\mp}$: 
\begin{align}
\begin{aligned}\label{eq:cell_problem_chi^pm}
\partial_{t}(\varrho^\lay\partial_t\chi_i^{\pm}) - \nabla_y \cdot (A^\lay e_y(\chi_i^{\pm})) &= 0 &\mbox{ in }& S\times \omega \times Y_0,
\\
\chi_i^{\pm} &= \phi_i^{\pm} &\mbox{ on }& S  \times\omega\times (S^+_Y \cup S^-_Y),
\\
-A^\lay e_y(\chi_i^{\pm})\nu &=0 &\mbox{ on }& S \times\omega\times\partial Y_1,
\\
\chi_i^{\pm}(0) &= \phi_i^{\pm} &\mbox{ in }& \omega\times Y_0,
\\
\partial_t \chi_i^{\pm} (0) &= 0 &\mbox{ in }& \omega\times Y_0,
\\
\chi_i^{\pm}\text{ periodic with respect to }&Y'
\end{aligned}
\end{align}
and
\begin{align}
\begin{aligned}\label{eq:cell_problem_eta^pm}
\partial_{t}(\varrho^\lay \partial_t\eta_i^{\pm}) - \nabla_y \cdot (A^\lay e_y(\eta_i^{\pm})) &= 0 &\mbox{ in }& S\times \omega \times Y_0,
\\
\eta_i^{\pm} &=0 &\mbox{ on }& S\times\omega\times (S^+_Y \cup S^-_Y),
\\
-A^\lay e_y(\eta_i^{\pm})\nu &=0 &\mbox{ on }& S \times\omega\times\partial Y_1,
\\
\eta_i^{\pm}(0) &= 0 &\mbox{ in }& \omega\times Y_0,
\\
\partial_t \eta_i^{\pm} (0) &= -\phi_i^{\pm} &\mbox{ in }& \omega\times Y_0,
\\
\eta_i^{\pm}\text{ periodic with respect to }&Y'.
\end{aligned}
\end{align}
We note that
\begin{align*}
\partial_t w^\lay(t,x',y)=& \sum_{\pm} \sum_{i=1}^3 \bigg\{ \partial_t u_i^{\pm} (t,0,x')\phi^\pm_i(y) + \partial_t u_i^{\pm}(0,0,x') \partial_t\eta_i^{\pm}(t,x',y)\\
    &+ \int_0^t \partial_t u_i^{\pm}(s,0,x') \partial_t \chi_i^{\pm}(t-s,x',y) + \partial_{tt} u_i^{\pm}(s,0,x') \partial_t\eta_i^{\pm}(t-s,x',y)\s \bigg\}.
\end{align*}
Considering the regularity stated for a weak solution in Appendix \ref{sec:appendix_wave_equation}, this equation is not fulfilled pointwise almost everywhere, but in the $L^2$-sense with respect to time. (Nonetheless, solutions to the cell problems above are continuous with respect to time and therefore also the pointwise equation makes sense.)  
It is easy to check that $w^\lay|_{S_Y^{\pm}} = u^{\pm}|_{\omega} - u_0^{\pm}|_{\omega}$ on $S\times \omega \times S_Y^{\pm}$ and also to verify the initial conditions $w^\lay(0) = \partial_t w^\lay (0) = 0$. It remains to establish the weak equation for $w^\lay$, which follows by a lengthy but elementary calculation using Lemma \ref{lem:aux_weak_sol_wave_equation} and we skip the details.

Finally, we consider the term $u_f^\lay$ including the force term $f^\lay$. It is easy to check that the following representation of $u_f^\lay$ is valid:
\begin{align*}
    u_f^\lay(t,x',y) = \sum_{i=1}^3 \int_0^t f^\lay_i(s,x')\partial_t\theta_i(t-s,x',y)\s 
\end{align*}
for almost every $(t,x',y) \in S\times \omega \times Y_0$, where $\theta$ is the unique weak solution to the cell problem
\begin{align}
\begin{aligned}\label{eq:cell_problem_theta}
    \partial_t(\varrho^\lay\partial_t\theta_i) - \nabla_y\cdot(A^\lay e_y(\theta_i)) & =0 &\mbox{ in }& S\times\omega\times Y_0,\\
    -A^\lay e_y(\theta)\nu &=0 &\mbox{ on }& S \times\omega\times\partial Y_1,\\
    \theta_i &=0 &\mbox{ on }& S\times\omega\times(S^+_Y \cup S^-_Y),\\
    \theta_i(0) &=0 &\mbox{ in }& \omega\times Y,\\
    \partial_t\theta_i(0) &=e_i &\mbox{ in }& \omega\times Y.
\end{aligned}
\end{align}
In summary, we have the following representation for $u^\lay$:
\begin{proposition}\label{prop:decomposition_u^M}
Under the additional assumptions \ref{ass:additional_solution_trace} and \ref{ass:additional_initial_value}, the limit function $u^\lay$ from Proposition \ref{prop:compactness_layer_gamma_1} fulfils 
\begin{align*}
u^\lay(t,x',y) =& u_0^\lay + \tilde{u}_1^\lay + \sum_{i=1}^3 \int_0^t f^\lay_i(s,x')\partial_t\theta_i(t-s,x',y)\s 
\\
&+ \sum_{\pm}\sum_{i=1}^3 \bigg\{\int_0^t \partial_t u_i^{\pm} (s,0,x') \chi_i^{\pm}(t-s,x',y) \s + \int_0^t \partial_{tt} u_i^{\pm}(s,0,x') \eta_i^{\pm}(t-s,x',y)\s\\
&+ \partial_t u_i^{\pm}(0,0,x') \eta_i^{\pm}(t,x',y)\bigg\}
\end{align*}
for almost every $(t,x',y) \in S\times \omega \times Y_0$,
where the cell solutions $\tilde{u}_1^\lay$, $\theta_i$, $\chi_i^{\pm}$ and $\eta_i^{\pm}$ for $i=1,2,3$ are given by the problems  $\eqref{eq:cell_problem_tilde_u1^M}$, $\eqref{eq:cell_problem_chi^pm}$, $\eqref{eq:cell_problem_eta^pm}$  and $\eqref{eq:cell_problem_theta}$, respectively.
\end{proposition}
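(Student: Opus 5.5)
The plan is to prove the representation by superposition and uniqueness. Throughout, $x'\in\omega$ enters only as a parameter.

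First I show that $u^\lay$ is, for a.e.\ $x'$, the unique weak solution of a hyperbolic cell problem on $S\times Y_0$ whose Dirichlet data on $S_Y^{\pm}$ are the traces $u^{\pm}|_{\omega}$. To get the equation, I take test functions $(0,v^\lay,0)\in\spaceH_1$ with $v^\lay=0$ on $S_Y^{\pm}$ in \eqref{eq:macro_model_two_scale_gamma_1}. This gives
\[
\partial_t(\varrho^\lay\partial_t u^\lay)-\nabla_y\cdot(A^\lay e_y(u^\lay))=\varrho^\lay f^\lay \quad\text{in } S\times\omega\times Y_0,
\]
together with the natural condition $-A^\lay e_y(u^\lay)\nu=0$ on $\partial Y_1$, $Y'$-periodicity, and $u^\lay(0)=u_0^\lay$, $\partial_t u^\lay(0)=u_1^\lay$. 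Proposition~\ref{prop:interface_condition} supplies the boundary values $u^\lay=u^{\pm}|_{\omega}$ on $S_Y^{\pm}$. The uniqueness statement for such problems is the one in Appendix~\ref{sec:appendix_wave_equation}. Its proof is the energy argument of Theorem~\ref{thm:main_convergence_two_scale_model}, with the test function $-\int_t^T\chi_s u\,\mathrm{d}\tau$, applied to the difference of two solutions with homogeneous Dirichlet data on $S_Y^{\pm}$.

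Second, I check that the right-hand side $U:=u_0^\lay+\tilde u_1^\lay+u_f^\lay+w^\lay$ solves the same problem. By \ref{ass:additional_initial_value}, $u_0^\lay$ is a time-independent solution of the homogeneous equation with Neumann condition on $\partial Y_1$ and boundary values $u_0^{\pm}|_{\omega}$. Adding $w^\lay$, whose boundary data are $u^{\pm}|_\omega-u_0^{\pm}|_\omega$, restores the correct boundary values. The terms $\tilde u_1^\lay$ and $u_f^\lay$ carry the initial velocity and the force with zero boundary data. Since $w^\lay(0)=\partial_t w^\lay(0)=0$, the initial conditions add up correctly. Uniqueness then gives $u^\lay=U$. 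It remains to justify the explicit formulas for $u_f^\lay$ and $w^\lay$; each is again a uniqueness argument.

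For $u_f^\lay$, this is Duhamel's principle. Set $\Phi(t)=\sum_i\int_0^t f_i^\lay(s)\partial_t\theta_i(t-s)\,\mathrm{d}s$. Then $\Phi(0)=0$, and since $\partial_t\theta_i(0)=e_i$ and $\theta_i$ solves the homogeneous equation,
\[
\partial_t\Phi=\sum_i f_i^\lay(t)e_i+\int_0^t f_i^\lay(s)\,\partial_{tt}\theta_i(t-s)\,\mathrm{d}s.
\]
From this one recovers the force term $\varrho^\lay f^\lay$. Here it is used that $f^\lay$ is independent of $y$, and $\varrho^\lay$ has to be placed correctly inside the cell problem.

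For $w^\lay$, I verify the stated formula directly. The time derivative computed above shows that the trace on $S_Y^{\pm}$ is $\int_0^t\partial_t u_i^{\pm}(s)\,\mathrm{d}s\, e_i=(u^{\pm}-u_0^{\pm})|_\omega$, since $\eta_i^{\pm}$ vanishes there and $\chi_i^\pm=\phi_i^{\pm}$ there. The initial values vanish because $\chi_i^\pm$ and $\eta_i^\pm$ are integrated over intervals of length zero at $t=0$. The weak form of the equation is obtained by inserting the convolution into the weak formulation and applying Fubini. Lemma~\ref{lem:aux_weak_sol_wave_equation} then handles the shifted cell solutions $\chi_i^{\pm}(t-s)$ and $\eta_i^{\pm}(t-s)$ tested against $v(t)$.

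The main obstacle is this weak-form verification at low regularity. The cell solutions are only weak solutions, continuous in time with values in $H^1$, with time derivatives continuous in $L^2$. Differentiating the convolution twice therefore produces boundary terms at $s=t$ that must cancel. The $\eta$-terms, with $\partial_t\eta_i^{\pm}(0)=-\phi_i^{\pm}$, are designed exactly to cancel the term $\partial_t u_i^{\pm}(t)\phi_i^{\pm}$ that appears when $\chi_i^\pm$ is differentiated once more. This requires $\partial_{tt}u^{\pm}|_\omega\in L^2$, which is assumption \ref{ass:additional_solution_trace}. I would first carry out the cancellation for smooth-in-time traces and then conclude by density, using continuous dependence of the weak solution on the boundary data in the norm of \ref{ass:additional_solution_trace}.
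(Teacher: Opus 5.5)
Your overall route matches the paper's. You identify $u^\lay$, for a.e.\ $x'$, as the unique weak solution of the cell wave problem with Dirichlet data $u^\pm|_\omega$ on $S_Y^\pm$. You split it by linearity into $u_0^\lay+\tilde u_1^\lay+u_f^\lay+w^\lay$ and verify the convolution formulas with Lemma~\ref{lem:aux_weak_sol_wave_equation}. Your treatment of $w^\lay$ (the trace, the role of $\partial_t\eta_i^\pm(0)=-\phi_i^\pm$, density in the norm of \ref{ass:additional_solution_trace}) is sound, apart from one imprecision. The $\eta$-terms cancel $\partial_t u_i^\pm(0)\phi_i^\pm$ in $\partial_t w^\lay(0)$ and $\partial_{tt}u_i^\pm(t)\phi_i^\pm$ in $\partial_{tt}w^\lay$. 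The term $\partial_t u_i^\pm(t)\phi_i^\pm$ itself remains in $\partial_t w^\lay$. So $\partial_t w^\lay(0)=0$ does not follow from ``intervals of length zero''; it needs that cancellation.

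The step that fails is the Duhamel check for $u_f^\lay$. For $f^\lay$ continuous in time, your own display gives $\partial_t\Phi(0)=\sum_i f_i^\lay(0)e_i\neq0$, so the initial velocity is wrong. Differentiating once more, using formally $\varrho^\lay\partial_{tt}\theta_i(0)=\nabla_y\cdot(A^\lay e_y(\theta_i(0)))=0$, gives
\[ \partial_t(\varrho^\lay\partial_t\Phi)-\nabla_y\cdot(A^\lay e_y(\Phi))=\varrho^\lay\partial_t f^\lay \]
rather than $\varrho^\lay f^\lay$. Concretely, if $f^\lay$ is independent of $t$, then $\Phi(t)=\sum_i f_i^\lay\theta_i(t)$. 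This solves the \emph{homogeneous} problem with initial velocity $f^\lay$, so it is not $u_f^\lay$.

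What you displayed is actually $\partial_{tt}$ of $\sum_i\int_0^t f_i^\lay(s)\theta_i(t-s)\,\mathrm{d}s$, and that function is the correct $u_f^\lay$. Since $\theta_i(0)=0$ and $\partial_t\theta_i(0)=e_i$, $\theta_i$ is the impulse response. Lemma~\ref{lem:aux_weak_sol_wave_equation} with $\mu=\theta_i$, $\mu_1=e_i$ says exactly that this convolution is the weak solution with force $\varrho^\lay f_i^\lay e_i$ and zero data.

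Hence the representation as stated, with $\partial_t\theta_i(t-s)$, is false whenever $f^\lay\neq0$. The paper's ``it is easy to check'' makes the same slip; it is harmless later only because \ref{ass:additional_initial_zero} sets $f^\lay=0$. The fix is to replace $\partial_t\theta_i(t-s)$ by $\theta_i(t-s)$. Equivalently, redefine $\theta_i$ as the solution with force $\varrho^\lay e_i$ and zero initial data; its time derivative is the present $\theta_i$. With that correction your argument proves the representation.
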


We note that all cell problems necessary for the representation of $u^\lay$ can be calculated independently of the bulk solutions $u^{\pm}$. 
As a final step, we eliminate $u^\lay$ in the equation $\eqref{eq:macro_model_two_scale_gamma_1}$ in order to derive the effective coefficients for the interface condition across $\omega$; more precisely, we arrive at a condition for the normal stresses on $\omega$ with respect to $\Omega^{\pm}$. In order to make the following calculations less technical, we make an additional assumption:
\begin{enumerate}
[label = (A\arabic*$^{\prime}$)]
\setcounter{enumi}{2}
\item\label{ass:additional_initial_zero} The initial conditions and the forcing in the layer vanish, i.e.~$u_0^\lay = u_1^\lay = 0$ and $f^\lay = 0$; in particular, this implies $u^{\pm}_0|_{\omega}= 0$.
\end{enumerate}
Nevertheless, the following procedure can also be carried out for inhomogeneous data. 

Using \ref{ass:additional_initial_zero} in the representation of Proposition \ref{prop:decomposition_u^M}, we obtain $u^\lay = w^\lay$, i.e.
\begin{align*}
u^\lay(t,x',y) = \sum_{\alpha \in \{\pm\}} \sum_{i=1}^3 \left\{ \int_0^t \partial_t u_i^{\alpha} (s,0,x') \chi_i^{\pm}(t-s,x',y) \s + \int_0^t \partial_{tt} u_i^{\pm}(s,0,x') \eta_i^{\alpha} (t-s,x',y) \s \right\}.
\end{align*}
For the remainder of this subsection, we use the notation $\sum_{\alpha \in\{\pm\}}$ instead of $\sum_{\pm}$, since we have to deal with double sums over $\{\pm\}$ in what follows. We have
\begin{align*}
 \partial_t u^\lay(t,x',y) =& \sum_{\alpha \in \{\pm\}} \sum_{i=1}^3 \bigg\{   \partial_t u_i^{\alpha}(s,0,x')\phi_i^{\alpha}(y) + \int_0^t \partial_t u_i^{\alpha}(s,0,x') \partial_t \chi_i^{\alpha} (t-s,x',y) \s \\
   &+ \int_0^t \partial_{tt} u_i^{\alpha}(s,0,x') \partial_t \eta_i^{\alpha}(t-s,x',y) \s\bigg\}.
\end{align*}
To obtain an effective equation for $u^+$ and $u^-$ without coupling to the limit displacement $u^\lay$ in the interface, we plug the expression for $u^\lay$ into the two-scale homogenised equation $\eqref{eq:macro_model_two_scale_gamma_1}$ and choose suitable test functions $(v^+,v^\lay,v^-)$; more precisely, for given $v^{\pm} \in L^2(S;H^1_{\GamD\cap \partial \Omega^{\pm}}(\Omega^{\pm}))^3$ we define 
\begin{align*}
    v^\lay(t,x',y) = \sum_{\beta\in \{\pm\}} \sum_{j=1}^3 \chi_j^{\beta}(t,x',y) v_j^{\beta}(t,0,x').
\end{align*}
Obviously, we have $(v^+,v^\lay,v^-) \in L^2(S,\spaceH_1)$. This choice of test function carries all the necessary information about the effective model for $u^+$ and $u^-$. 
For the sake of simplicity (again to make the calculations less technical), we assume $\partial_t(\varrho^\lay u^\lay) \in L^2(S\times \omega \times Y_0)^3$ and, since we are mainly interested in the interface condition on $\omega$, we assume the bulk condition as smooth as necessary. Hence, choosing $v^{\pm}(0) = v^{\pm}(T) = 0$ (this is inherited by $v^\lay$), we can write $\eqref{eq:macro_model_two_scale_gamma_1}$ in the following way:
\begin{align}
\label{eq:aux_effecive_equation}
\sum_{\beta \in \{\pm\}} \int_{\omega} A^{\beta} e(u^{\beta}) \nu^{\beta} \cdot v^{\beta} d\sigma + \int_{\omega \times Y_0} \partial_t (\varrho^\lay \partial_t u^\lay) \cdot v^\lay + A^\lay e_y(u^\lay) : e_y(v^\lay)\x' \y = 0. 
\end{align}
This equation is valid in the distributional sense almost everywhere in $S$. For the second time derivative of $u^\lay$, we have with the representation of $u^\lay$ (and using the properties of $\chi_i^{\pm}$ and $\eta_i^{\pm}$):
\begin{align*}
\partial_t(\varrho^\lay \partial_t u^\lay)= \sum_{\alpha \in \{\pm\}} \sum_{i=1}^3 \bigg\{ \int_0^t& \varrho^\lay \partial_t u_i^{\alpha} (s,0,x') \partial_{tt} \chi_i^{\alpha} (t-s,x',y) \s
\\
&+ \int_0^t  \partial_{tt} u_i^{\alpha}(s,0,x') \partial_t (\varrho^\lay \partial_t \eta_i^{\alpha})(t-s,x',y) \s \bigg\}.
\end{align*}
Let us now define the effective coefficients  for $i,j=1,2,3$ and $\alpha,\beta \in \{\pm\}$:
\begin{align}
\label{def:effective_G}
G_{ji}^{\alpha\beta} (\tau,t,x') &:= \int_{Y_0} \partial_t(\varrho^\lay \partial_t \chi_i^{\alpha})(\tau,x',y) \cdot \chi_i^{\beta}(t,x',y) + A^\lay e_y(\chi_i^{\alpha})(\tau,x',y): e_y(\chi_j^{\beta})(t,x',y) \y,
\\
\label{def:effecitve_F}
F_{ji}^{\alpha\beta}(\tau,t,x') &:= \int_{Y_0} \partial_t(\varrho^\lay \partial_t \eta_i^{\alpha})(\tau,x',y) \cdot \chi_i^{\beta}(t,x',y) + A^\lay e_y(\eta_i^{\alpha})(\tau,x',y): e_y(\chi_j^{\beta})(t,x',y) \y.
\end{align}
Using the representations of $u^\lay$ and $v^\lay$ in $\eqref{eq:aux_effecive_equation}$ and using the previous calculations and definitions of the coefficients $G^{\alpha\beta}$ and $F^{\alpha\beta}$, we obtain (for almost every $t \in (0,T)$)
\begin{align*}
\sum_{\beta \in \{\pm\}} &\int_{\omega} A^{\beta} e(u^{\beta})(t,0,x') \nu^{\beta} \cdot v^{\beta}(t,0,x') \mathrm{d}\sigma 
\\
&+  \sum_{\alpha,\beta\in \{\pm\}} \int_{\omega} \int_0^t G^{\alpha\beta}(t-s,t,x')\partial_t u^{\alpha}(s,0,x') + F^{\alpha\beta}(t-s,t,x') \partial_{tt}u^{\alpha}(s,0,x') \s \cdot v^{\beta} \x' = 0.
\end{align*}
In other words, the normal stress of the bulk displacements $u^{\pm}$ on $\omega$ is given by
\begin{align}
\begin{aligned}
   \left( A^{\beta}e(u^{\beta}) \nu^{\beta} \right)(t,x') &= \sum_{\alpha \in\{\pm\}}  \int_0^t G^{\alpha\beta}(t-s,t,x')\partial_t u^{\alpha}(s,0,x') + F^{\alpha\beta}(t-s,t,x') \partial_{tt}u^{\alpha}(s,0,x') \s
   \\
   &=: H^{\beta}(t,x',u^+,u^-)
   .
\end{aligned}
\end{align}

In summary, we have shown the following result:
\begin{theorem}
The bulk displacements $u^+$ and $u^-$ are a weak solution to the effective problem $\eqref{eq:macro_model_gamma1_effective}$.
\end{theorem}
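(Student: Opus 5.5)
The plan is to combine three ingredients: the two-scale homogenised problem of Theorem \ref{thm:main_convergence_two_scale_model}, the representation of $u^\lay$ from Proposition \ref{prop:decomposition_u^M}, and a test function in $\spaceH_1$ whose layer component is built from the cell solutions $\chi_j^{\beta}$.

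\textbf{Step 1 (bulk equations, boundary and initial data).} By Theorem \ref{thm:main_convergence_two_scale_model}, the triple $(u^+,u^\lay,u^-)$ solves \eqref{eq:macro_model_two_scale_gamma_1}. Testing with $(\phi^+,0,0)$ and $(0,0,\phi^-)$, where $\phi^{\pm}\in C_0^\infty(S,H^1_0(\Omega^{\pm}))^3$, gives the wave equations in $\Omega^{\pm}$ in $L^2(S,H^{-1}(\Omega^\pm))^3$. Testing with functions supported away from $\omega$ but nonzero on $\Gamma_N$ gives the Neumann condition. The Dirichlet condition is built into the function spaces. Under \ref{ass:additional_initial_zero} the initial data satisfy $u^\pm(0)=u_0^\pm$ and $\partial_t u^\pm(0)=u_1^\pm$; the problem writes these as zero, which I read as the normalisation intended there. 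Hence everything in \eqref{eq:macro_model_gamma1_effective} except the interface condition on $\omega$ is inherited directly.

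\textbf{Step 2 (eliminating $u^\lay$).} Under \ref{ass:additional_initial_zero}, Proposition \ref{prop:decomposition_u^M} gives $u^\lay=w^\lay$. This is a time convolution of $\partial_t u^\alpha|_\omega$ with $\chi_i^\alpha$ and of $\partial_{tt}u^\alpha|_\omega$ with $\eta_i^\alpha$. The initial term $\partial_t u_i^\pm(0)\,\eta_i^\pm(t)$ drops out because $u_1^\pm|_\omega$ vanishes in the trace sense.

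For arbitrary smooth $v^\pm$ with $v^\pm(0)=v^\pm(T)=0$, I take
\[
v^\lay=\sum_{\beta}\sum_{j=1}^3\chi_j^\beta(t)\,v_j^\beta(t,0,x').
\]
This lies in $\spaceH_1$ because $\chi_j^\beta=\phi_j^\beta$ on $S_Y^\pm$. Assumption \ref{ass:additional_solution_trace} makes $\partial_{tt}u^\pm|_\omega\in L^2$, so the convolutions are well defined. I then write \eqref{eq:macro_model_two_scale_gamma_1} in the form \eqref{eq:two_scale_macro_gamma1_aux}. After integrating by parts in the bulk using Step 1, the result is the identity \eqref{eq:aux_effecive_equation}. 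It holds a.e.\ in $S$, with the bulk normal stresses paired with $v^\beta|_\omega$.

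\textbf{Step 3 (differentiating the convolution).} This is the main obstacle. The task is to compute $\partial_t(\varrho^\lay\partial_t u^\lay)$ and $e_y(u^\lay)$ from the convolution representation. Differentiating $\int_0^t a(s)\chi(t-s)\s$ twice produces boundary terms: $a(t)\chi(0)=a(t)\phi$ at first order, and $a(t)\partial_t\chi(0)=0$ at second order. For the $\eta$ part, $\eta(0)=0$ and $\partial_t\eta(0)=-\phi$ give a term $-\partial_{tt}u(t)\phi$. This must combine with the derivative of $\partial_t u(t)\phi$ coming from the $\chi$ part.

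I would check that these boundary contributions cancel, so that
\[
\partial_t(\varrho^\lay\partial_t u^\lay)=\sum\int_0^t\Big(\partial_t u\,\partial_t(\varrho^\lay\partial_t\chi)+\partial_{tt}u\,\partial_t(\varrho^\lay\partial_t\eta)\Big)(t-s)\s,
\]
as stated in the text. This must be made rigorous at the low time regularity of the cell solutions. I would first try to justify it distributionally in time, pairing with a smooth time test function and using Lemma \ref{lem:aux_weak_sol_wave_equation} from the appendix, together with Fubini in $(s,t)$.

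\textbf{Step 4 (effective coefficients).} I insert the derivatives from Step 3 into \eqref{eq:aux_effecive_equation} and use Fubini to move the $y$-integral inside the $s$-integral. The $Y_0$-integrals pairing the cell solutions at time $t-s$ with $\chi_j^\beta(t)$ are exactly $G^{\alpha\beta}_{ji}(t-s,t,x')$ and $F^{\alpha\beta}_{ji}(t-s,t,x')$ from \eqref{def:effective_G}--\eqref{def:effecitve_F}. This yields
\[
\sum_\beta\int_\omega\big(A^\beta e(u^\beta)\nu^\beta-H^\beta\big)\cdot v^\beta\x'=0
\]
a.e.\ in $S$, with $H^\beta$ as in \eqref{eq:Hbeta}. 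The traces $v^\beta|_\omega$ are arbitrary and dense, so the normal stress condition holds weakly on $\omega$. For cell solutions with more regularity, the strong form of the cell problem and integration by parts in $y$ turn the volume coefficients into the boundary integrals over $S_Y^\beta$ given in Section \ref{sec:mainresults_gam1}.
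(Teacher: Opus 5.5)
Your proposal is correct and follows the paper's own argument: $u^\lay=w^\lay$ from Proposition~\ref{prop:decomposition_u^M}, the layer test function $v^\lay=\sum_{\beta,j}\chi_j^\beta v_j^\beta$, the reduction to \eqref{eq:aux_effecive_equation}, differentiation of the convolutions (your explicit check that the terms $\pm\partial_{tt}u_i^\alpha(t)\phi_i^\alpha$ cancel is what the paper uses without spelling it out), and identification of the $Y_0$-integrals with $G^{\alpha\beta}$ and $F^{\alpha\beta}$. There is one orientation slip, which you share with the paper's last display: with $\nu^\beta$ the outer normal of $\Omega^\beta$ (as the integration by parts in your Step~2 requires), inserting the representation gives $\sum_\beta\int_\omega (A^\beta e(u^\beta)\nu^\beta+H^\beta)\cdot v^\beta\,\mathrm{d}x'=0$, so the condition $A^{\beta}e(u^{\beta})\nu=H^{\beta}$ in \eqref{eq:macro_model_gamma1_effective} holds only when $\nu$ points from the layer into $\Omega^\beta$, i.e.\ is the outward normal of $Y_0$ on $S_Y^\beta$.
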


\subsection{The case $\gamma = -1$}

We recall from Proposition \ref{prop:compactness_layer_gamma_-1} that the limit function $u^\lay$ in the interface is independent of the microscopic variable in the case $\gamma = -1$ and we expect an additional macroscopic equation on the interface $\omega$ describing the movement of the interface (the displacements $u^+$ and $u^-$ are continuous across $\omega$, see Proposition \ref{prop:interface_condition}). Due to the heterogeneous structure of the interface, the macroscopic equation on $\omega$ includes a homogenised elasticity coefficient depending on suitable cell problems which are introduced now: 
Let $\chi_{ij}^A \in \widetilde{H}_{\#}^1(Y_0)^3/\R^3$ for $i,j=1,2,3$ be the unique weak solution to 
\begin{align}
\begin{aligned}\label{eq:CellProblem_chi_ij}
-\nabla_y \cdot (A (e_y(\chi_{ij}^\mathrm{A}) + M_{ij})) &= 0 &\mbox{ in }& Y_0,
\\
-A(e_y(\chi_{ij}^\mathrm{A}) + M_{ij} ) \nu &= 0 &\mbox{ on }& \partial Y_1 \cup S_Y^+ \cup S_Y^-,
\\
\chi_{ij}^\mathrm{A} \mbox{ is } Y'\mbox{-periodic, } & \frac{1}{\abs{Y_0}}\int_{Y_0} \chi_{ij}^\mathrm{A} \y = 0,
\end{aligned}
\end{align}
with $M_{ij} \in \R^{3\times 3}$ defined by 
\begin{align*}
    M_{ij}:= \frac12 (e_i \otimes e_j + e_j \otimes e_i).
\end{align*}
The Korn inequality for $Y'$-periodic functions with vanishing mean implies the existence of a unique weak solution to this problem. Now, we show a representation of the limit function $u^{\lay,1}$ (the first-order corrector):
\begin{proposition}\label{prop:representation_uM1_gamma-1}
Let $u^\lay$ and $u^{\lay,1}$ be the limit functions from Proposition \ref{prop:compactness_layer_gamma_-1}. Then, the following representation holds for almost every $(t,x',y) \in S \times \omega \times Y_0$:
\begin{align*}
    u^{\lay,1}(t,x',y) = \sum_{i,j=1}^2 e_{x'}(u^\lay)_{ij}(t,x') \chi_{ij}^\mathrm{A}(y).
\end{align*}
\end{proposition}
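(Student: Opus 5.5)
The plan is to derive the cell problem satisfied by $u^{\lay,1}$ by passing to the limit in the microscopic weak formulation \eqref{eq:weak_formulation} with an oscillating test function concentrated in the layer. We then identify $u^{\lay,1}$ through uniqueness of the cell problem \eqref{eq:CellProblem_chi_ij}.

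\textbf{Test function.} We take $v^\eps(t,x) := \eps\, \psi(t,x')\, \phi\left(\frac{x}{\eps}\right)$ in $\OmEps^\lay$ and $v^\eps = 0$ in $\OmEps^\pm$, with $\psi \in C_0^\infty(S\times\omega)$ and $\phi \in \widetilde{H}^1_\#(Y_0)^3$. For this to be admissible, $\phi$ must vanish on $S_Y^\pm$, since the test function has to be continuous across $S_\eps^\pm$ and vanish in the bulk. This restriction is too strong, because the cell problem \eqref{eq:CellProblem_chi_ij} carries Neumann conditions on $S_Y^\pm$. I will therefore instead take $v^\eps = \eps\,\psi(t,x')\,\phi(x/\eps)$ extended into the bulk as $\eps\,\psi(t,x')\,\phi^\pm(x'/\eps)$, where $\phi^\pm := \phi|_{S_Y^\pm}$ is $Y'$-periodic in $y'$. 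The bulk contributions are then controlled as follows. In $\OmEps^\pm$ we have $\|v^\eps\|_{L^2}\to0$, but $e(v^\eps)$ is $O(1)$ and oscillates with mean zero in $x'$. Since $A^\pm e(\ueps)$ is only weakly convergent, the product $A^\pm e(\ueps):e(v^\eps)$ needs care. To avoid this, I would multiply the bulk extension by a cutoff in $x_1$ of width $\eps$, e.g. $\eta(x_1/\eps)$, so that the bulk integrals are $O(\eps^{1/2})$ by Cauchy--Schwarz and the a priori bounds $\|e(\ueps)\|_{L^2(\OmEps^\pm)}\le C$. Handling these boundary-layer terms is the step I expect to be the main technical obstacle.

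\textbf{Passing to the limit in the layer.} After unfolding (Proposition~\ref{prop:properties_unfolding_operator}), the elastic term becomes
\[
\eps^{-1}\cdot\eps\int \TEps(A^\lay_\eps)\TEps(e(\ueps)) : \TEps(e(v^\eps))\,\mathrm{d}x'\,\mathrm{d}y\,\mathrm{d}t ,
\]
and $\TEps(e(v^\eps)) \to \psi\, e_y(\phi)$ strongly. Using Proposition~\ref{prop:compactness_layer_gamma_-1} and the continuity of $A^\lay$ in $x'$, this yields
\[
\int_S\int_\omega\int_{Y_0} A^\lay\bigl(e_{x'}(\hat u^\lay)+e_y(u^{\lay,1})\bigr):e_y(\phi)\,\psi \,\mathrm{d}y\,\mathrm{d}x'\,\mathrm{d}t .
\]
The inertia and force terms carry an extra factor $\eps$ together with bounded unfolded sequences, so they vanish; the same holds for the initial term. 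We conclude that, for a.e. $(t,x')$ and all $\phi\in\widetilde{H}^1_\#(Y_0)^3$,
\[
\int_{Y_0}A^\lay\bigl(e_y(u^{\lay,1}) + e_{x'}(\hat u^\lay)\bigr):e_y(\phi)\,\mathrm{d}y=0 ,
\]
where $e_{x'}(\hat u^\lay)$ is embedded in $\R^{3\times3}$ with vanishing first row and column. This is exactly the weak form of \eqref{eq:CellProblem_chi_ij}, with Neumann conditions on $\partial Y_1\cup S_Y^\pm$.

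\textbf{Identification.} By linearity, $\sum_{i,j} e_{x'}(\hat u^\lay)_{ij}\chi^\mathrm{A}_{ij}$ satisfies the same variational identity, because $e_{x'}(\hat u^\lay)=\sum_{i,j}e_{x'}(\hat u^\lay)_{ij}M_{ij}$ with the sum running over the in-plane indices only. Uniqueness modulo constants in $\widetilde{H}^1_\#(Y_0)^3/\R^3$ follows from the Korn inequality for $Y'$-periodic functions, as stated after \eqref{eq:CellProblem_chi_ij}: the only periodic rigid motions are translations. This gives the claimed representation, with $u^{\lay,1}$ understood modulo constants.
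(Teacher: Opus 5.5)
Your proposal is correct and follows essentially the paper's proof. Both use an $\eps$-scaled oscillating test function in the layer (the paper instead multiplies the equation by $\eps$), extend it into $\OmEps^\pm$ with a cutoff of width $\eps$ in the normal direction so that the bulk stress terms are $O(\sqrt{\eps})$, pass to the limit via unfolding, and conclude by uniqueness of the cell problem modulo constants. One detail to tidy up: take $\phi$ smooth and $Y'$-periodic on $\overline{Y_0}$ (as the paper does), because the trace of a general $\phi\in\widetilde{H}^1_\#(Y_0)^3$ is only $H^{1/2}$, so its $x_1$-constant extension into the bulk need not lie in $H^1$; the identity for all $\phi\in\widetilde{H}^1_\#(Y_0)^3$ then follows by density.
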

\begin{proof}
We argue in a similar way as in the proof of \cite[Theorem 5.2]{bhattacharya2022homogenization}.  We define
\begin{align*}
 v^\eps(t,x)=\begin{cases}
        v^\lay\left(t,x',\frac{x'}{\eps},1\right) \varrho\left(\frac{x_3}{\eps}\right), &(t,x)\in S\times\OmEps^{+},\\
        v^\lay \left(t,x',\frac{x}{\eps}\right), &(t,x)\in S\times\OmEps^\lay,\\
         v^\lay\left(t,x',\frac{x'}{\eps},-1\right) \varrho\left(-\frac{x_3}{\eps}\right), &(t,x)\in S\times\OmEps^{-},
    \end{cases}
\end{align*}
with $v^\lay \in C_0^{\infty}(S \times \omega,C^{\infty}(\overline{Y_0}))^3$ $Y'$-periodic and $\varrho \in C_0^{\infty}([1,2))$ such that $0 \le \varrho \le 1$ and $\varrho = 1$ in $\left[1,\frac32\right]$. This is an admissible test function for the microscopic equation $\eqref{eq:weak_formulation}$ and we obtain using the unfolding operator (similar as in the proof of Theorem \ref{thm:main_convergence_two_scale_model})  writing $v^{\eps,\pm}:= v^{\eps}|_{\OmEps^{\pm}}$ 
\begin{align}
\begin{aligned}\label{eq:aux_representation}
-&\int_0^T\int_{\Omega^{+}} \varrho^{+}\chi_{\OmEps^{+}}\partial_t \ueps\cdot\partial_t v^{\eps,+}\,\x\t -\int_0^T\int_{\Omega^{-}}\varrho^{-}\chi_{\OmEps^{-}}\partial_t \ueps\cdot\partial_t v^{\eps,-}\,\x\t\\
    &-\int_0^T\int_{\omega\times Y_0} \TEps(\varrho^\lay_\eps)\TEps(\partial_t \ueps)\cdot\TEps(\partial_t v^\eps)\,\x'\y\t +\int_0^T\int_{\Omega^{+}} A^{+} \chi_{\OmEps^{+}}e(\ueps):e(v^{\eps,+})\,\x\t\\
    &+\int_0^T\int_{\Omega^{-}} A^{-} \chi_{\OmEps^{-}}e(\ueps):e(v^{\eps,-})\,\x\t  + \int_0^T\int_{\omega\times Y_0} \TEps(A^\lay_\eps)\TEps(e(\ueps)):\TEps(e(v^\eps))\,\x'\y\t\\
	=& \int_0^T\int_{\Omega^{+}} \varrho^{+}\chi_{\OmEps^{+}} f^{+}\cdot v^{\eps,+}\,\x\t +\int_0^T\int_{\Omega^{-}} \varrho^{-}\chi_{\OmEps^{-}} f^{-}\cdot v^{\eps,-}\,\x\t.
\end{aligned}
\end{align}
After multiplying this equation with $\eps$, we pass to the limit $\eps \to 0$. The only critical terms are the stress terms including the symmetrised gradients $e(\ueps)$. First, we show that the bulk term vanishes in the limit. We have 
\begin{align*}
\left| \eps \int_0^T \int_{\OmEps^{\pm}} A^{\pm} e(\ueps) : e(v^{\eps,\pm}) \x \t \right| \le   C \|e(\ueps)\|_{L^2(S\times \OmEps^{\pm})} \left[ \int_{\eps}^{2\eps} \left|\varrho\left(\frac{x_3}{\eps}\right)\right|^2 +  \left|\varrho'\left(\frac{x_3}{\eps}\right)\right|^2 \x_3 \right]^{\frac12} \le C \sqrt{\eps}
\end{align*}
using the a priori estimates from Theorem \ref{theo:existence_result_gen}.
Hence, for $\eps \to 0$ all terms except the stress term in the layer vanish and we obtain with the compactness result for $e(\ueps)$ from Proposition \ref{prop:compactness_layer_gamma_-1} that
\begin{align*}
    \int_0^T \int_{\omega} \int_{Y_0} A^\lay \left[ e_{x'}(\hat{u}^\lay) + e_y(u^{\lay,1})\right] : e_y(v^\lay) \y \x'\t = 0,
\end{align*}
having written $\hat{u}^\lay := (u_1^\lay,u_2^\lay)$. 
By density, this equation is valid for all $v^\lay \in L^2(S \times \omega;\widetilde{H}_{\#}^1(Y_0))^3$. Since the solution to this equation is unique (up to a constant) by the Lax--Milgram theorem and the Korn inequality, we get the desired result.
\end{proof}

The previous proof shows that $u^{\lay,1}$ is the unique (up to a constant) weak solution to the cell problem
\begin{align}
\begin{aligned}\label{cell_problem_uM_gamma_-1}
-\nabla_y \cdot \left( A^\lay\left[ e_{x'}(\hat{u}^\lay) + e_y (u^{\lay,1}) \right] \right) &= 0 &\mbox{ in }& S \times \omega \times Y_0,
\\
- A^\lay\left[ e_{x'}(\hat{u}^\lay) + e_y (u^{\lay,1}) \right] \nu &= 0 &\mbox{ on } & S\times \omega \times (\partial Y_1 \cup S_Y^+ \cup S_Y^-),
\\
u^{\lay,1} \mbox{ is } Y'\mbox{-periodic}.
\end{aligned}
\end{align}
Next, we give the proof of the main result for the case $\gamma = -1$. We recall  the definition of the solution space \eqref{def:spaceH_-1} (to which the limit function $(u^+,u^\lay,u^-)$ from Proposition \ref{prop:compactness_layer_gamma_-1} belongs),
\begin{align*}
\spaceH_{-1}= \left\{\phi \in H_{\Gamma_D}^1(\Omega)^3 \, : \, (\phi_1,\phi_2)|_{\omega} \in H_0^1(\omega)^2 \right\}.
\end{align*}
For a function $\phi\in \spaceH_{-1}$, we use the notation $(\phi^+,\phi^\lay,\phi^-)$ with $\phi^{\pm}:= \phi|_{\Omega^{\pm}} $ and $\phi^\lay := \phi|_{\omega}$. Before we give the proof of the derivation of the macro model, we show the following density result (see \cite[Lemma 5.3]{GahnEffectiveTransmissionContinuous} for a similar result with different boundary conditions):
\begin{lemma}\label{lem:density}
Functions $\phi \in C_0^{\infty}(\overline{\Omega}\setminus \Gamma_D)^3 $ with $(\phi_1,\phi_2)|_{\omega} \in C_0^{\infty}(\omega)^2$ are dense in $\spaceH_{-1}$ with respect to the norm induced by the inner product (as usual, we denote with $\hat{\cdot}$ the last two components of a vector field)
\begin{align*}
(u,v)_{\spaceH_{-1}} := (u,v)_{H^1(\Omega)} + (\hat{u},\hat{v})_{H^1(\omega)} + (u_1,v_1)_{L^2(\omega)}
\end{align*}
on $\spaceH_{-1}$.
\end{lemma}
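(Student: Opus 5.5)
Fix $u \in \spaceH_{-1}$. We have to approximate it, in the norm of the statement, by smooth functions whose in-plane trace components have compact support in $\omega$. (There is an index inconsistency in the paper: the definition of $\spaceH_{-1}$ constrains $(\phi_1,\phi_2)$ while $\hat{\cdot}$ denotes components $2,3$. We treat the constrained pair as the in-plane components $\hat u$ throughout.)

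The plan is to split off the in-plane trace by a bounded lifting and then mollify the remainder.

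\textbf{Step 1 (approximate the trace).} Since $\hat u|_\omega \in H_0^1(\omega)^2$, choose $\hat\psi_n \in C_0^\infty(\omega)^2$ with $\hat\psi_n \to \hat u|_\omega$ in $H^1(\omega)$.

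\textbf{Step 2 (bounded lifting).} We need a linear extension operator $E\colon H^1_0(\omega) \to H^1(\Omega)$ such that:
\begin{itemize}
\item $(E\psi)|_\omega = \psi$;
\item $E\psi$ is supported in a small slab $(-\delta,\delta)\times\omega$ disjoint from $\Gamma_D$;
\item $\|E\psi\|_{H^1(\Omega)} \le C\|\psi\|_{H^1(\omega)}$;
\item $E$ maps $C_0^\infty(\omega)$ into $C_0^\infty(\overline\Omega\setminus\Gamma_D)$.
\end{itemize}
The simplest choice is $E\psi(x_1,x') = \zeta(x_1)\psi(x')$ with a cutoff $\zeta \in C_0^\infty(-\delta,\delta)$, $\zeta(0)=1$. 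This is bounded $H^1(\omega) \to H^1(\Omega)$ and preserves compact support in $x'$.

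The one thing to check is that $\Gamma_D$ does not meet the lateral part $(-\delta,\delta)\times\partial\omega$. The lifted function does not vanish on $(-\delta,\delta)\times\partial\omega$ automatically, but it does here, since $\psi$ has compact support in $\omega$. Hence the lifted functions lie in $H^1_{\Gamma_D}$ regardless of where $\Gamma_D$ sits.

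\textbf{Step 3 (split off the trace).} Set $w := u - E_2(\hat u|_\omega)$, where $E_2$ applies $E$ to the in-plane components and leaves the normal component zero. Then $w \in H^1_{\Gamma_D}(\Omega)^3$ and $\hat w|_\omega = 0$.

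\textbf{Step 4 (approximate the remainder).} Approximate $w$ by smooth $w_n$ vanishing near $\Gamma_D$ with $\hat w_n|_\omega$ compactly supported in $\omega$. This is the main obstacle: the $\spaceH_{-1}$-norm contains $\|\hat{\cdot}\|_{H^1(\omega)}$, which is not controlled by the $H^1(\Omega)$ norm.

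I would not attempt a generic mollification of $w$ itself. Instead, the plan is as follows.
\begin{itemize}
\item Approximate the first component $w_1$ by standard density of $C_0^\infty(\overline\Omega\setminus\Gamma_D)$ in $H^1_{\Gamma_D}(\Omega)$. This is standard for a Lipschitz domain and a regular closed $\Gamma_D$, which we assume. Its $L^2(\omega)$ trace then converges by the trace theorem, which is all the norm requires for this component.
\item For $\hat w$, with zero trace on $\omega$, use that $\hat w|_{\Omega^\pm} \in H^1_{\Gamma_D\cup\omega}(\Omega^\pm)$. Approximate on each side separately by functions in $C_0^\infty(\overline{\Omega^\pm}\setminus(\Gamma_D\cup\overline\omega))$, again standard density for functions vanishing on a boundary portion.
\item Glue the two sides by zero extension across $\omega$. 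The glued function is smooth, lies in $H^1(\Omega)$ since both sides vanish near $\omega$, and has zero trace on $\omega$. So the $H^1(\omega)$ part of the norm contributes nothing for $\hat w_n$.
\end{itemize}

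\textbf{Step 5 (conclude).} Set $\phi_n := w_n + E_2\hat\psi_n$. Then $\phi_n \to u$ in $H^1(\Omega)$, $\hat\phi_n|_\omega = \hat\psi_n \to \hat u|_\omega$ in $H^1(\omega)$, and the first component converges in $L^2(\omega)$. Hence $\phi_n \to u$ in $\spaceH_{-1}$, and each $\phi_n$ is in the claimed dense class.

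A closing remark: I would flag that the density statements for the boundary portion $\Gamma_D$ require a mild regularity assumption on $\Gamma_D$, for example a Lipschitz relative boundary. This is implicitly assumed in the paper.
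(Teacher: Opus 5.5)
Your proof is correct under the index convention you adopt, which is also the one the paper's own proof uses. It is organised differently from the paper's, and it is slightly more robust.

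The paper argues by duality. It takes $u$ orthogonal to the closure of the smooth class and tests with $C_0^\infty(\Omega^\pm)$ to get $-\Delta u^\pm+u^\pm=0$ in each bulk domain. It then uses the resulting normal traces in $H^{-1/2}(\partial\Omega^\pm)$ to rewrite orthogonality as an identity involving only boundary values. Finally it inserts a sequence $v_n=\phi_n+\psi_n$ whose traces converge to those of $u$, concluding $\|u\|_{\spaceH_{-1}}=0$.

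You instead build the approximating sequence directly in the $\spaceH_{-1}$-norm. Your ingredients are essentially the paper's $\psi_n$ (a cut-off, constant-in-$x_1$ extension of an approximation of the in-plane trace) and $\phi_n$ (a bulk approximant vanishing on $\omega$). Your argument shows that once these are available, the orthogonal-complement step, the bulk equation and the normal traces are superfluous. The only thing the paper's reduction buys is that the test sequence need reproduce the boundary values of $u$ rather than $u$ itself.

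Your route also handles the normal component more cleanly. Since $u_1|_\omega$ is only measured in $L^2(\omega)$, you never lift or subtract it. The paper's function $\tilde u^\pm$ with zero trace on $\omega$ is obtained by subtracting a constant-in-$x_1$ extension of $u|_\omega$. That works only for the in-plane components, because $u_1|_\omega$ is merely in $H^{1/2}(\omega)$.

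Two minor points:
\begin{itemize}
\item The inclusion $E(H_0^1(\omega))\subset H_0^1(\Omega)$ should be deduced from the boundedness of $E$ together with $E(C_0^\infty(\omega))\subset C_0^\infty(\Omega)$. Compact support alone does not justify it, since $\hat u|_\omega$ itself need not have compact support.
\item The sidewise density for the Dirichlet set $(\Gamma_D\cap\partial\Omega^\pm)\cup\overline{\omega}$ can be reduced to density for $\Gamma_D$ alone. Multiply by $1-\theta(|x_1|/\eta)$, with $\theta$ a cut-off equal to $1$ near $0$. The term $\eta^{-1}\theta'(|x_1|/\eta)\,v$ is then controlled by Hardy's inequality $\|v/x_1\|_{L^2(\Omega^\pm)}\le 2\|\partial_1 v\|_{L^2(\Omega^\pm)}$ for $v$ vanishing on $\omega$. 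So no regularity of that union beyond that of $\Gamma_D$ is needed.
\end{itemize}
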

\begin{proof}
We define the space
\begin{align*}
    \spaceH_{-1}^{\infty} := \left\{\phi \in C_0^{\infty}(\overline{\Omega}\setminus \Gamma_D)^3  \, : \, \hat{\phi}|_{\omega} \in C_0^{\infty}(\omega)^2 \right\}
\end{align*}
and have to show that this space is dense in $\spaceH_{-1}$. We denote the closure of $\spaceH_{-1}^{\infty}$ by $Y$. Hence, we have $\spaceH_{-1} = Y + Y^{\perp}$ and we have to show that $Y^{\perp} = \{0\}$. Let $u \in Y^{\perp}$. Then, for all $v \in \spaceH_{-1}^{\infty}$ it holds that
\begin{align*}
0 = (u,v)_{H^1(\Omega)} + (u|_{\omega},v|_{\omega})_{H^1(\omega)} + (u_1,v_1)_{L^2(\omega)}.
\end{align*} 
In particular, we can choose $v \in C_0^{\infty}(\Omega^{\pm})^3$ (extended by zero to the whole domain $\Omega$) to find
\begin{align*}
0 = (u^{\pm}, v^{\pm})_{H^1(\Omega^{\pm})}
\end{align*}
and, therefore, $\Delta u^{\pm} = u^{\pm}$ in $\Omega^{\pm}$ in the distributional sense. This also implies the existence of the normal trace $\nabla u^{\pm} \nu $ in $H^{-\frac12}(\partial \Omega^{\pm})^3$.  By integration by parts, we obtain for  every $v \in \spaceH_{-1}^{\infty}$ 
\begin{align}\label{eq:density_aux}
0 = (u,v)_{\spaceH_{-1}} = (\hat{u},\hat{v})_{H^1(\omega)} + (u_1,v_1)_{L^2(\omega)} + \sum_{\pm} \langle \nabla u^{\pm}\nu,v\rangle_{H^{-\frac12}(\partial \Omega^{\pm}),H^{\frac12}(\partial \Omega^{\pm})}.
\end{align}
Hence, we have reduced the problem to an equation on the boundary and we now have to show that we can choose $v = u$. For this, we first note that by the inverse trace theorem there exists $\tilde{u}^{\pm} \in H^1(\Omega^{\pm})^3$ such that 
\begin{align*}
    \tilde{u}^{\pm}|_{\partial \Omega^{\pm}} = \begin{cases}
        u^{\pm}|_{\partial \Omega^{\pm}} &\mbox{ on } \partial \Omega^{\pm} \setminus \omega,
        \\
        0 &\mbox{ on } \omega.
    \end{cases}
\end{align*}
Here, we used the fact that the right-hand side in the equation above is an element in $H^{\frac12}(\partial \Omega^{\pm})^3$ as $u \in \spaceH_{-1}$ (extend $u|_{\omega}$ constantly to the bulk domains and multiply it by a cut-off function, and then subtract it from $u$). Hence, there exists a sequence $\phi_n \in C_0^{\infty}(\overline{\Omega}\setminus \Gamma_D)^3$ such that $\phi_n^{\pm} \rightarrow \tilde{u}^{\pm}$ in $H^1(\Omega^{\pm})^3$ and also the traces converge in $H^{\frac12}(\partial \Omega^{\pm})^3$ to the traces of $\tilde{u}^{\pm}$. Now, since $\hat{u} \in H_0^1(\omega)^2$, there exists a sequence $\psi_n \in C_0^{\infty}(\omega)^3$ with $\hat{\psi}_n \rightarrow \hat{u} $  in $H^1(\omega)^2$ and $[\psi_n]_1 \rightarrow u_1$ in $L^2(\omega)$. We extend the sequence $\psi_n$ to a sequence in $C_0^{\infty}(\Omega)^3$ (constant in $x_1$-direction and multiplied by a cut-off function) and use the same notation for the extended sequence. Now, we define the sequence
\begin{align*}
    v_n := \phi_n + \psi_n \in \spaceH_{-1}
\end{align*}
and use it as a test function in equation  $\eqref{eq:density_aux}$. Since $v_n|_{\partial \Omega^{\pm}} \rightarrow u^{\pm}|_{\partial \Omega^{\pm}}$ in $H^{\frac12}(\partial \Omega^{\pm})^3$ and $v_n|_{\omega} \rightarrow u|_{\omega} $ in $H^1(\omega)^2 \times L^2(\omega))$, we get 
after integration by parts in the boundary terms and using $\Delta u^{\pm} = u^{\pm}$
\begin{align*}
    0 = \|\hat{u}\|_{H^1(\omega)}^2 + \|u_1\|_{L^2(\omega)}^2 + \|u\|^2_{H^1(\Omega)},
\end{align*}
and therefore $u = 0$, which implies the desired result.
\end{proof}
Now, we can give the proof of the main result.

\begin{theorem}
The limit function $(u^+,u^\lay,u^-)$ from Proposition \ref{prop:compactness_bulk_domains} and \ref{prop:compactness_layer_gamma_-1} is the unique weak solution to the macroscopic problem $\eqref{Macro_Model_gamma_-1}$.
\end{theorem}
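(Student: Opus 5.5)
We pass to the limit in the microscopic weak formulation \eqref{eq:weak_formulation} with test functions built from $\phi \in \spaceH_{-1}^{\infty}$, the dense class of Lemma \ref{lem:density}. Fix $\phi \in \spaceH_{-1}^{\infty}$ with $\phi(T)=0$, smooth in time, and set $v^\eps := \phi + \eps \sum_{i,j=1}^2 e_{x'}(\hat\phi|_\omega)_{ij}(t,x')\,\chi_{ij}^{\mathrm{A}}(x/\eps)$ in $\OmEps^\lay$. The corrector term is extended into the bulk near $S_\eps^{\pm}$ by a cut-off $\varrho(\pm x_3/\eps)$ as in the proof of Proposition \ref{prop:representation_uM1_gamma-1}, and $v^\eps:=\phi$ elsewhere. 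Since $\hat\phi|_\omega \in C_0^\infty(\omega)^2$, the corrector vanishes near the lateral boundary, so Assumption \ref{ass:zero_Dirichlet_BC_gamma-1-3} is respected; if necessary we also multiply by a lateral cut-off of $\phi$ in the layer, which costs nothing since $\phi$ is smooth.

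Under unfolding we have $\TEps(v^\eps)\to\phi|_\omega$ and $\TEps(\partial_t v^\eps)\to\partial_t\phi|_\omega$ strongly, and
\[
\TEps(e(v^\eps)) = \tfrac1\eps e_y(\phi|_\omega)+ e_{x'}(\hat\phi)+\textstyle\sum e_{x'}(\hat\phi)_{ij} e_y(\chi^{\mathrm A}_{ij}) + O(\eps).
\]
The term $\tfrac1\eps e_y(\phi|_\omega)$ looks singular. However, $\phi$ is smooth in $x$, so $\TEps(e(\phi))$ is bounded, and it converges strongly to $e(\phi)(0,x')$. Paired with $\eps^{-1}\TEps(A^\lay_\eps)\TEps(e(\ueps))$ after the rescaling $\eps\cdot\eps^{-1}$ from Proposition \ref{prop:properties_unfolding_operator}, the layer stress term becomes $\int \TEps(A^\lay_\eps)\,\eps^{-1}\TEps(e(\ueps)) : \TEps(e(v^\eps))$. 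The factor $\eps^{-1}\TEps(e(\ueps))$ is only known to be bounded in $L^2$ with weight $\eps^{-1/2}\cdot$; concretely, $\eps^{-1/2}\|e(\ueps)\|_{L^2(\OmEps^\lay)}\le C$ gives $\TEps(e(\ueps))$ bounded. So the weak limit of $\TEps(e(\ueps))$ is $e_{x'}(\hat u^\lay)+e_y(u^{\lay,1})$ by Proposition \ref{prop:compactness_layer_gamma_-1}, and the layer stress is $\int_{\OmEps^\lay}\eps^{-1}A^\lay_\eps e(\ueps):e(v^\eps) = \int_{\omega\times Y_0}\TEps(A^\lay_\eps)\TEps(e(\ueps)):\TEps(e(v^\eps))$.

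To make $\TEps(e(v^\eps))$ converge strongly, I would choose the leading part in the layer not as $\phi$ itself but as $\phi(t,0,x')$, i.e. constant in $x_1$ across the thin layer. Its symmetric gradient then contains only $e_{x'}$ of $\hat\phi$ and mixed $x_1$ terms $\tfrac12\partial_{x'}\phi_1$. The latter are still bounded, and their contribution is absorbed by adding the corrector $\eps x_1$-linear terms, or equivalently by using the full set $\chi^{\mathrm A}_{ij}$, $i,j=1,2,3$, which compensate the $M_{1j}$ directions. The bulk part is glued by the cut-off. With this choice the limit of the layer stress is
\[
\int A^\lay\big(e_{x'}(\hat u^\lay)+e_y(u^{\lay,1})\big):\big(e_{x'}(\hat\phi)+\textstyle\sum e_{x'}(\hat\phi)_{ij}e_y(\chi^{\mathrm A}_{ij})\big).
\]
Inserting Proposition \ref{prop:representation_uM1_gamma-1} and the cell equation \eqref{eq:CellProblem_chi_ij}, this becomes $\int_\omega A^\ast e_{x'}(\hat u^\lay):e_{x'}(\hat\phi)$ with $A^\ast_{ijkl}=\int_{Y_0}A^\lay(e_y\chi^{\mathrm A}_{ij}+M_{ij}):(e_y\chi^{\mathrm A}_{kl}+M_{kl})$, which is \eqref{def:effective_elasticity_tensor}.

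The bulk terms converge by Proposition \ref{prop:compactness_bulk_domains}. The cut-off corrector contributions vanish as in the proof of Proposition \ref{prop:representation_uM1_gamma-1}: they are $O(\sqrt\eps)$ since the corrector carries a factor $\eps$. The inertia and force terms in the layer converge by the unfolding convergences and assumptions (A4) and (A6). This yields the weak formulation for smooth $\phi$, and Lemma \ref{lem:density} extends it to all of $L^2(S;\spaceH_{-1})$, since every term is continuous in the $\spaceH_{-1}$-norm. The interface continuity $u^\pm=u^\lay$ and the initial data come from Proposition \ref{prop:interface_condition} and the compactness results. Uniqueness follows by the same energy argument as in Theorem \ref{thm:main_convergence_two_scale_model}, testing with $-\int_t^T\chi_s u$ and using the coercivity of $A^\ast$ on symmetric $2\times2$ matrices, which is inherited from $A^\lay$. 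Uniqueness also gives convergence of the whole sequence.

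The main obstacle is constructing the recovery test function in the thin layer so that $\TEps(e(v^\eps))$ converges strongly in $L^2$ and captures exactly the membrane strain $e_{x'}(\hat\phi)$ plus the cell corrector. This means handling the components $\phi_1$ and the mixed $x_1$-derivatives without producing $O(\eps^{-1})$ terms, while matching the traces on $S_\eps^{\pm}$ with the bulk.
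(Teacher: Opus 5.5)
Your argument is essentially sound, but it takes a different route from the paper.

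\textbf{The paper's route.} The paper tests directly with $v\in\spaceH^{\infty}_{-1}$, with no oscillating corrector and no special structure across the layer. Since $v$ is smooth, $\TEps(e(v))\to e(v)(t,0,x')$ strongly, so the layer stress converges to $\int_{\omega\times Y_0}A^\lay[e_{x'}(\hat u^\lay)+e_y(u^{\lay,1})]:e(v)(t,0,x')$. The components of $e(v)(t,0,x')$ along the normal directions $M_{i1}$ are removed by testing the cell problem \eqref{cell_problem_uM_gamma_-1} for $u^{\lay,1}$ with the $Y'$-periodic affine field $y\mapsto y_1e_i$, whose symmetric gradient is $M_{i1}$. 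The in-plane part then gives $A^\ast$ after inserting the representation from Proposition \ref{prop:representation_uM1_gamma-1}.

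\textbf{Your route.} You use Tartar-type oscillating test functions $\phi+\eps\sum e(\phi)_{ij}\chi^{\mathrm A}_{ij}(x/\eps)$. Your observation that $\chi^{\mathrm A}_{1j}=-y_1e_j$ up to a constant, so that $M_{1j}+e_y(\chi^{\mathrm A}_{1j})=0$, is the same identity the paper uses, applied on the test side rather than the solution side.

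\textbf{What each route buys.} In your route Proposition \ref{prop:representation_uM1_gamma-1} is superfluous. The cell equation for $\chi^{\mathrm A}_{ij}$, tested with $u^{\lay,1}$, together with the symmetry of $A^\lay$, already gives $\int_{Y_0} A^\lay e_y(u^{\lay,1}):(M_{ij}+e_y(\chi^{\mathrm A}_{ij}))\,\mathrm{d}y=0$. So ``inserting'' the representation is harmless but unnecessary.

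The price is technical work the paper avoids entirely:
\begin{itemize}
\item You must glue the corrector to the bulk across $S^\pm_\eps$.
\item The correctors have limited regularity. $\chi^{\mathrm A}_{ij}$ is only $H^1(Y_0)$, and it is only continuous in $x'$ through $A^\lay$. Hence the constant-in-$x_1$ extension of its trace, borrowed from the proof of Proposition \ref{prop:representation_uM1_gamma-1} (where $v^\lay$ is smooth), need not lie in $H^1$. Likewise the term $\eps\nabla_{x'}\chi^{\mathrm A}_{ij}$ appearing in $e(v^\eps)$ need not exist.
\end{itemize}
To fix this, replace $\sum e(\hat\phi)_{ij}\chi^{\mathrm A}_{ij}$ by a smooth $Y'$-periodic approximation, or extend it by reflection across $S^\pm_Y$, which is possible since $\overline{Y_1}\subset Y$. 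Then let the approximation parameter tend to zero after $\eps\to0$. Your uniqueness sketch, which the paper omits, is fine.

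\textbf{The obstacle you identify is not real.} Your first expansion containing $\frac1\eps e_y(\phi|_\omega)$ is incorrect: by Proposition \ref{prop:properties_unfolding_operator}(iii), $\TEps(e(\phi))=\frac1\eps e_y(\TEps(\phi))$. For smooth $\phi$ this converges uniformly to $e(\phi)(t,0,x')$, so no $O(\eps^{-1})$ terms ever arise. There is therefore no need to replace $\phi$ by $\phi(t,0,x')$ in the layer, which only creates an additional trace mismatch to glue. The only genuine point is the normal-strain components of $e(\phi)(t,0,x')$. Either your $\chi^{\mathrm A}_{1j}$ correctors or the paper's cancellation via $y_1e_i$ handles them.
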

\begin{proof}
As a test function in $\eqref{eq:weak_formulation}$, we choose $v \in L^2(S;\spaceH_{-1}^{\infty})$ with $\partial_t v \in L^2(S;L^2(\Omega))^3$ and $v(T) = 0$ (see the proof of Lemma \ref{lem:density} for the definition). After unfolding and using the convergence results from Section \ref{sec:compactness}, we obtain for $\eps \to 0$
\begin{align}
\begin{aligned}\label{eq:macro_model_two_scale}
	-&\int_0^T\int_{\Omega^{+}} \varrho^{+}\partial_t u^{+}\cdot\partial_t v^{+}\,\x\t -\int_0^T\int_{\Omega^{-}}\varrho^{-}\partial_t u^{-}\cdot\partial_t v^{-}\,\x\t\\
    &-\int_0^T\int_{\omega\times Y_0} \varrho^\lay \partial_t u^\lay\cdot\partial_t v^\lay\,\x'\y\t +\int_0^T\int_{\Omega^{+}} A^{+}e(u^{+}):e(v^{+})\,\x\t\\
    &+\int_0^T\int_{\Omega^{-}} A^{-}e(u^{-}):e(v^{-})\,\x\t +\int_0^T\int_{\omega\times Y_0} A^\lay\left[e_{x'}(\hat{u}^\lay) + e_y(u^{\lay,1})\right] : e_x(v)(0,x')\,\x'\y\t\\
	=& \int_0^T\int_{\Omega^{+}} \varrho^{+}f^{+}\cdot v^{+}\,\x\t +\int_0^T\int_{\Omega^{-}} \varrho^{-} f^{-}\cdot v^{-}\,\x\t +\int_0^T\int_{\omega\times Y_0} \varrho^\lay(y) f^\lay\cdot v^\lay\,\x'\y\t\\
    &+\int_0^T\int_{\GamN\cap\partial\Omega^{+}} g\cdot v^{+}\,\mathrm{d}\sigma(x)\t +\int_0^T\int_{\GamN\cap\partial\Omega^{-}} g\cdot v^{-}\,\mathrm{d}\sigma(x)\t +\int_{\Omega^{+}}\varrho^{+}u^{+}_1\cdot v^{+}(0,x)\,\x\\
    &+\int_{\Omega^{-}}\varrho^{-}u^{-}_1\cdot v^{-}(0,x)\,\x +\int_{\omega\times Y_0} \varrho^\lay u^\lay_1(x',y)\cdot v(0,x')\,\x'\y.
\end{aligned}
\end{align}
By density, see Lemma \ref{lem:density}, this equation is valid for all $v $ having values in the space $\spaceH_{-1}$.
Let us consider in more detail the stress term over $\omega \times Y_0$. Using the decomposition  $e(v) = e_{x'}(\hat{v}) + 2 \sum_{i=1}^3 e_x(v)_{i3} M_{i3}$ with
\begin{align*}
  e_{x'}(\hat{v}):=    \begin{pmatrix}
      0 & 0& 0 \\
      0 & \partial_2 v_2 & \frac12 (\partial_2 v_3 + \partial_3 v_2) \\
      0 & \frac12 (\partial_2 v_3 + \partial_3 v_2) & \partial_3 v_3
    \end{pmatrix},
\end{align*}
we obtain (almost everywhere in $(0,T)$)
\begin{align*}
\int_{\omega\times Y_0}& A^\lay\left[e_{x'}(\hat{u}^\lay) + e_y(u^{\lay,1})\right] : e_x(v)(0,x')\,\x'\y  
\\
=&\int_{\omega} \int_{Y_0} A^\lay\left[e_{x'}(\hat{u}^\lay) + e_y(u^{\lay,1})\right] : e_{x'}(v) (0,x') \y\x' 
\\
&+2 \sum_{i=1}^3 \int_{\omega}e_x(v)_{i3} \int_{Y_0} A^\lay\left[e_{x'}(\hat{u}^\lay) + e_y(u^{\lay,1})\right] : M_{i3} \y \x'. 
\end{align*}
The second term is equal to zero, since we have that $y \mapsto y_3 e_i$ is a $Y'$-periodic function with $e_y(y_3 e_i) = M_{i3}$, which can be used as a test function in the cell problem $\eqref{cell_problem_uM_gamma_-1}$. Hence, after an elementary calculation we get
\begin{align*}
\int_{\omega\times Y_0}& A^\lay\left[e_{x'}(\hat{u}^\lay) + e_y(u^{\lay,1})\right] : e_x(v)(0,x')\,\x'\y   = \int_{\omega} A^{\ast} e_{x'} (\hat{u}) : e_{x'}(\hat{v}^\lay) \x',
\end{align*}
with the effective elasticity tensor $A^{\ast} \in \R^{2\times 2 \times 2 \times 2}$ defined by 
\begin{align}
\begin{aligned}\label{def:effective_elasticity_tensor}
A^{\ast}_{ijkl}:&=\int_{Y_0} [A(M_{kl} + D_y(\chi_{kl}^\mathrm{A}))]:[M_{ij} + D_y(\chi_{ij}^\mathrm{A})]\y , \quad i,j,l,k = 1,2,
\end{aligned}
\end{align}
with the cell solutions $\chi_{kl}^\mathrm{A}$ from $\eqref{eq:CellProblem_chi_ij}$.
This implies that $(u^+,u^\lay,u^-)$ is a weak solution to the macroscopic problem $\eqref{Macro_Model_gamma_-1}$. Uniqueness follows by standard arguments and we skip the details.
\end{proof}

\subsection{The case $\gamma = -3$}

Finally, we consider the case $\gamma = -3$. Similar results were obtained in \cite{gahn2022derivation} and \cite{gahn2025b_effective} for fluid--structure interaction problems. Here, we adapt these methods to the purely elastic case. We use the compactness results from Proposition \ref{prop:compactness_layer_gamma_-3} and first give a representation for $u^{\lay,2}$. For this purpose, we introduce yet another cell problem: Find $\chi_{ij}^\mathrm{B} \in \widetilde{H}_{\#}^1(Y_0)^3$, $i=1,2,3$, which is the weak solution to 
\begin{align}
\begin{aligned} \label{cell_problem_chi_ij_B}
-\nabla_y \cdot (A^\lay (e_y(\chi_{ij}^\mathrm{B}) - y_1 M_{ij})) &= 0 &\mbox{ in }& Y_0,
\\
-A^\lay (e_y(\chi_{ij}^\mathrm{B}) - y_1 M_{ij})\nu &= 0 &\mbox{ on }& \partial Y_1 \cup S_Y^+ \cup S_Y^-,
\\
\chi_{ij}^\mathrm{B} \mbox{ is } Y\mbox{-periodic, } & \frac{1}{\abs{Y_0}}\int_{Y_0} \chi_{ij}^\mathrm{B} \y = 0.
\end{aligned}   
\end{align}
\begin{proposition}
Let $u^\lay$, $\hat{u}^\lay$ and $u^{\lay,2}$ be the limit functions from Proposition \ref{prop:compactness_layer_gamma_-3}. Then, there holds for almost every $(t,x',y) \in S\times \omega \times Y_0$
\begin{align*}
    u^{\lay,2}(t,x',y) = \sum_{i,j=1}^2 \left[ e_{x'}(\hat{u}^\lay)_{ij}(t,x') \chi_{ij}^\mathrm{A}(y) + \partial_{ij} [u^3]_1(t,x') \chi_{ij}^\mathrm{B}(y) \right],
\end{align*}
where the cell solutions $\chi_{\ij}^\mathrm{A}$ and $\chi_{ij}^\mathrm{B}$ are given via $\eqref{eq:CellProblem_chi_ij}$ and $\eqref{cell_problem_chi_ij_B}$, respectively.
\end{proposition}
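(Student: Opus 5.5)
The plan mirrors the proof of Proposition \ref{prop:representation_uM1_gamma-1}: derive a cell problem for $u^{\lay,2}$ by testing the microscopic equation with oscillating test functions localised in the layer, and then identify its solution by linearity and uniqueness.

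\textbf{Step 1: the test function.} I will take $v^\lay \in C_0^\infty(S\times\omega, C^\infty(\overline{Y_0}))^3$, $Y'$-periodic in $y'$. I extend it into $\OmEps^\pm$ exactly as before, i.e.\ $v^\lay(t,x',\frac{x'}{\eps},\pm1)\varrho(\pm\frac{x_3}{\eps})$ on a strip of width $\eps$, and call the result $v^\eps$. Now, however, I use the scaled test function $\eps^2 v^\eps$ in \eqref{eq:weak_formulation}. Since $\gamma=-3$, the layer stress term in unfolded form reads
\[
\eps^{-3}\cdot \eps\int \TEps(A^\lay_\eps)\TEps(e(\ueps)):\TEps(e(\eps^2 v^\eps)) = \int \TEps(A^\lay_\eps)\,\frac{1}{\eps}\TEps(e(\ueps)) : e_y(v^\lay)+o(1).
\]
Here I use $\eps\TEps(e(v^\eps))\to e_y(v^\lay)$ strongly.

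\textbf{Step 2: the remaining terms vanish.} The bulk stress terms are bounded by $C\eps^2\|e(\ueps)\|\,\eps^{-1/2}\to0$, as in the $\gamma=-1$ proof. The inertia and force terms carry the factor $\eps^2$ against bounded quantities, so they also vanish. The boundary integral with $g$ vanishes because $v^\eps$ has support near $\omega$ away from $\Gamma_N$ (assumption (A3)). Passing to the limit with Proposition \ref{prop:compactness_layer_gamma_-3} gives
\[
\int_S\int_\omega\int_{Y_0} A^\lay\big[e_{x'}(\hat u^\lay) - y_1\nabla^2_{x'}[u^\lay]_1 + e_y(u^{\lay,2})\big]:e_y(v^\lay)\,dy\,dx'\,dt=0 .
\]
The sign and variable convention ($y_1$ versus $y_3$ in the statement of that proposition) has to be matched with the definition of $\chi^{\mathrm B}_{ij}$; I take the transverse variable $y_1$. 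By density this identity holds for all $v^\lay\in L^2(S\times\omega;\widetilde H^1_\#(Y_0))^3$.

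\textbf{Step 3: identification.} By linearity, the function
\[
\sum_{i,j=1}^2\big[e_{x'}(\hat u^\lay)_{ij}\chi^{\mathrm A}_{ij} + \partial_{ij}[u^\lay]_1\,\chi^{\mathrm B}_{ij}\big]
\]
solves the same variational problem, by the definitions \eqref{eq:CellProblem_chi_ij} and \eqref{cell_problem_chi_ij_B}. Only indices $i,j\in\{2,3\}$ (labelled $1,2$ in-plane) appear, since $e_{x'}$ and $\nabla^2_{x'}$ only have in-plane entries. The problem is coercive on $Y'$-periodic functions modulo rigid motions by the Korn inequality with periodicity in $y'$. The only periodic rigid motions are constants, so the solution is unique up to constants. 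Since $u^{\lay,2}$ is only defined modulo $\R$, the representation follows.

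\textbf{Main obstacle.} The delicate point is Step 2. First, one must check that the $\eps^2$ scaling is exactly right: the $\gamma=-3$ a priori bound $\eps^{-3/2}\|e(\ueps)\|_{L^2(\OmEps^\lay)}\le C$ gives $\eps^{-1}\TEps(e(\ueps))$ bounded, matching the convergence in Proposition \ref{prop:compactness_layer_gamma_-3}. Second, one must control the cross term $\eps^{-2}\int \TEps(A)\TEps(e(\ueps)):\eps^3\TEps(e_{x'}$-part of $e(v^\eps))$. This term is $O(\eps)$ and vanishes, but it needs the $x'$-derivative of $v^\lay$ to be bounded.
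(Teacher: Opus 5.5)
Your proposal is correct and follows essentially the same route as the paper: you test with $\varepsilon^2$ times the oscillating layer function, extended into $\Omega_\varepsilon^\pm$ by an $\varepsilon$-strip cut-off, observe that only the unfolded layer stress term survives the limit, and identify $u^{\mathrm{M},2}$ through uniqueness, modulo constants, of the resulting $Y'$-periodic cell problem. Your choice of the transverse variable $y_1$ rather than $y_3$ is the one consistent with the cell problem for $\chi^{\mathrm B}_{ij}$.
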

\begin{proof}
We use similar arguments as in the proof of Proposition \ref{prop:representation_uM1_gamma-1}, so we skip some details. As a test function in $\eqref{eq:weak_formulation}$, we use
\begin{align*}
 v^\eps(t,x)=\begin{cases}
       \eps^2  v^\lay\left(t,x',\frac{x'}{\eps},1\right) \varrho\left(\frac{x_3}{\eps}\right), &(t,x)\in S\times\OmEps^{+},\\
      \eps^2  v^\lay \left(t,x',\frac{x}{\eps}\right), &(t,x)\in S\times\OmEps^\lay,\\
        \eps^2 v^\lay\left(t,x',\frac{x'}{\eps},-1\right) \varrho\left(-\frac{x_3}{\eps}\right), &(t,x)\in S\times\OmEps^{-},
    \end{cases}
\end{align*}
with $v^\lay \in C_0^{\infty}(S \times \omega,C^{\infty}(\overline{Y_0}))^3$ $Y'$-periodic and $\varrho \in C_0^{\infty}([1,2))$ such that $0 \le \varrho \le 1$ and $\varrho = 1$ in $\left[1,\frac32\right]$. Now, the only difference to equation $\eqref{eq:aux_representation}$ is the stress term which now takes the form
\begin{align*}
\frac{1}{\eps^2} \int_0^T \int_{\omega \times Y_0} \TEps (A_{\eps}^\lay) \TEps(e(\ueps)) : \TEps(e(v^{\eps})) \y \x'  \t.
\end{align*}
Again, this is the only term not vanishing for $\eps \to 0$ and we obtain with Proposition \ref{prop:compactness_layer_gamma_-3}
\begin{align*}
\int_0^T \int_{\omega} \int_{Y_0} A^\lay \left[ e_{x'}(\hat{u}^\lay) - y_3 \nabla^2_{x'} [u^\lay]_1 + e_y(u^{\lay,2})  \right] : e_y (v^\lay) \y \x' \t = 0,
\end{align*}
which holds for all $v^\lay  \in L^2(S \times \omega;\widetilde{H}_{\#}^1(Y_0))^3 $ by density. This implies the desired result.
\end{proof}
Recalling the solution space for the bulk displacements,
\begin{align*}
\spaceH_{-3}= \left\{ \phi \in H^1(\Omega)^3 \, : \, \phi = 0 \mbox{ on } \Gamma_D, \, \, \phi|_{\omega} = (\phi_3,0,0)|_{\omega} \in H_0^2(\omega)^3 \right\},
\end{align*}
see \eqref{def:spaceH_-3}, we immediately obtain $(u^+,u^-) \in \spaceH_{-3}$ with $u^{\pm} = u^\lay$ on $\omega$ from Proposition \ref{prop:interface_condition}.
Now, we are able to prove the main result for the case $\gamma = -3$.
\begin{theorem}
The limit function $(u^+,u^\lay,\hat{u}^\lay,u^-)$ from Proposition \ref{prop:compactness_bulk_domains} and \ref{prop:compactness_layer_gamma_-3} is the unique weak solution to the macroscopic model $\eqref{Macro_Model_gamma_-3}$.
\end{theorem}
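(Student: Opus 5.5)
We pass to the limit in the microscopic weak formulation \eqref{eq:weak_formulation}, choosing test functions adapted to the Kirchhoff--Love structure of the limit in Proposition \ref{prop:compactness_layer_gamma_-3}. We use two classes of test functions and then add the resulting limit equations. The first class serves the bending part. Take $V \in C_0^\infty(\overline{\Omega}\setminus\Gamma_D)^3$ smooth in time with $V(T)=0$ and $V|_\omega = (V_1,0,0)$, $V_1\in C_0^\infty(\omega)$. In the layer we replace $V$ by the Kirchhoff--Love-type field
\begin{align*}
v^\eps(t,x) = V_1(t,x') e_1 - x_1 \big(0,\partial_2 V_1,\partial_3 V_1\big)(t,x') + \eps^2\,\text{(correctors)},
\end{align*}
and glue it continuously to $V$ in $\OmEps^\pm$, for example by shifting $V$ by $\pm\eps/2$ in $x_1$ plus a cut-off correction. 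Then $\frac{1}{\eps}\TEps(e(v^\eps)) \to -y_1\nabla^2_{x'}V_1$, which is the usual plate computation. The second class serves the membrane part: fields $\eps\,\bar U(t,x')$ with $\bar U=(0,U_2,U_3)$, $U_\alpha\in C_0^\infty(S\times\omega)$, in the layer, extended into the bulk with a cut-off as in the proof of Proposition \ref{prop:representation_uM1_gamma-1}. For these, $\frac{1}{\eps}\TEps(e(v^\eps))\to e_{x'}(\bar U)$, while all bulk and inertial contributions vanish as $O(\sqrt\eps)$ or $O(\eps)$ by Theorem \ref{theo:existence_result_gen}.

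With $\gamma=-3$, the layer stress term reads
\begin{align*}
\eps^{-3}\int_{\OmEps^\lay}A^\lay_\eps e(\ueps):e(v^\eps) = \int_{\omega\times Y_0}\TEps(A^\lay_\eps)\,\eps^{-1}\TEps(e(\ueps)):\eps^{-1}\TEps(e(v^\eps)).
\end{align*}
This is a weak--strong product, so it converges by Proposition \ref{prop:compactness_layer_gamma_-3} to
\begin{align*}
\int A^\lay\big[e_{x'}(\hat u^\lay)-y_1\nabla^2_{x'}[u^\lay]_1+e_y(u^{\lay,2})\big]:\big[e_{x'}(\bar U)-y_1\nabla^2_{x'}V_1\big].
\end{align*}
The paper's $y_3$ in the compactness statement should be the normal variable. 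Next we insert the representation of $u^{\lay,2}$ from the preceding Proposition. The components of the test strain involving the normal direction are killed exactly as in the $\gamma=-1$ proof, by testing the cell problems with $y\mapsto y_1 e_i$ and $y\mapsto \tfrac{1}{2}y_1^2 e_i$-type fields. This yields the effective coefficients $a^\ast$, $b^\ast$ and $c^\ast$ of \eqref{def:effective_coefficients_plate} as energy integrals of $M_{ij}+e_y(\chi^{A}_{ij})$ and $-y_1M_{ij}+e_y(\chi^{B}_{ij})$. The inertia term $\eps^{-1}\int\varrho^\lay_\eps\partial_t\ueps\cdot\partial_t v^\eps$ converges to $\int\varrho^\lay\partial_t[u^\lay]_1\partial_tV_1$, because the in-plane parts of $v^\eps$ are $O(\eps)$. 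The force and initial-data terms follow from (A4)--(A6), and the bulk terms from Proposition \ref{prop:compactness_bulk_domains}. Adding the two limit identities gives the weak formulation for smooth $(V,\bar U)$.

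To conclude, we need density of the smooth test pairs in $\spaceH_{-3}\times H_0^1(\omega)^2$. The norm here combines $H^1(\Omega)$ with $H^2(\omega)$ for the trace $V_1$. We prove this by the orthogonal-complement argument of Lemma \ref{lem:density}, now with a boundary term of $H^2$ type, using $H_0^2(\omega)$ density and a lifting of traces. Uniqueness follows by the energy argument used for Theorem \ref{thm:main_convergence_two_scale_model}. We take test functions $-\int_t^T\chi_s u$ and use the coercivity of the plate form
\begin{align*}
(\bar U,V_1)\mapsto\int A^\ast_{\mathrm{plate}}\big(e_{x'}(\bar U),\nabla^2V_1\big).
\end{align*}
This coercivity follows from the Korn inequality on $Y_0$ together with the $H_0^1$ and $H_0^2$ boundary conditions. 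There is one subtlety: $\hat u^\lay$ carries no inertia. It is therefore eliminated statically, and its test part $\bar U$ is taken as $-\int_t^T\chi_s\hat u^\lay$ as well.

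The main obstacle is the construction of the bending test function so that it is simultaneously admissible (continuous across $S^\pm_\eps$, zero on $\Gamma_D$ and on the lateral boundary) and has the right unfolded strain limit. A naive extension produces $O(1)$ strains of order $\eps^{-1}\cdot\eps$ near $S^\pm_\eps$ in the bulk, but these are bounded in $L^2$ on a layer of thickness $O(\eps)$ and therefore vanish in the limit. I would first try gluing $V(x_1\mp\eps/2,x')$ plus the linear Kirchhoff--Love correction multiplied by a cut-off in $x_1/\eps$, and then check that the resulting bulk error terms are $O(\sqrt\eps)$.
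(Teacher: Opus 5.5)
Your proposal is correct and follows the paper's route: the paper uses the single layer test field $V_1e_1+\eps\bar U-x_1\nabla_{x'}V_1$, glued to the bulk with a cut-off, and this is just the sum of your bending and membrane fields. From there both arguments pass to the limit via unfolding and the weak--strong convergence of $\eps^{-1}\TEps(e(\cdot))$, then insert the representation of $u^{\lay,2}$ to obtain $a^\ast,b^\ast,c^\ast$; your explicit handling of continuity across $S^\pm_\eps$, the density step and the energy-based uniqueness fill in details the paper leaves implicit. One minor correction: the limit test strain $e_{x'}(\bar U)-y_1\nabla^2_{x'}V_1$ has no normal components, so nothing needs to be killed with $y_1e_i$ or $\tfrac{1}{2}y_1^2e_i$. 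The symmetric coefficient formulas come instead from testing \eqref{eq:CellProblem_chi_ij} and \eqref{cell_problem_chi_ij_B} with $\chi^{\mathrm A}_{kl}$ and $\chi^{\mathrm B}_{kl}$, which lets you add $e_y(\chi^{\mathrm A}_{kl})$ and $e_y(\chi^{\mathrm B}_{kl})$ to the test strain.
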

\begin{proof}
Let $\psi \in C_0^{\infty}([0,1))$ be a cut-off function with $0 \le \psi \le 1$ and $\psi (0) = 1$. Further, let $V \in L^2(S;\spaceH_{-3})$ such that $\partial_t V \in L^2(S \times \Omega)^3$ and $\bar{U} := (U_2,U_3) \in L^2(S;H_0^1(\omega))^2$  with $\partial_t \bar{U} \in  L^2(S;L^2(\omega))^2$ and we do not distinguish the notation for the canonical embedding into $\{0\} \times H_0^1(\omega)^2$, i.e.~$\bar{U} = (0,U_2,U_3)$. Further, we assume $V(T) = 0$ and $\bar{U}(T) = 0$.
As a test function in $\eqref{eq:weak_formulation}$, we choose
\begin{align*}
v^{\eps}(t,x):= \begin{cases}
    V(t,x)  + \eps \psi\left( \frac{x_1 \mp \eps}{\eps}\right) \left( \bar{U}(t,x') - \nabla_{x'} V_3(t,x') \right) &\mbox{ for } (t,x) \in S \times \OmEps^{\pm},
    \\
    V_1(t,x') e_1 + \eps  \left( \bar{U}(t,x') - \frac{x_1}{\eps}\nabla_{x'} V_3(t,x') \right) &\mbox{ for } (t,x) \in S \times \OmEps^\lay.
\end{cases}
\end{align*}
Here, $V_3(t,x')$ denotes the trace of $V$ on $\omega$. The crucial point is that we have $e(v^{\eps}) = \eps( e_{x'}(\hat{U}) - \nabla^2_{x'} V_1)$ in $\OmEps^\lay$. Hence, after a long but straightforward calculation (we refer to \cite[Section 6]{gahn2022derivation} for analogous details), we get for $\eps \to 0$ using Proposition \ref{prop:compactness_layer_gamma_-3}
\begin{align*}
- \sum_{\pm}& \left\{ \int_0^T \int_{\Omega^{\pm}} \varrho^{\pm} \partial_t u^{\pm} \cdot \partial_t V \x \t  - \int_0^T \int_{\Omega^{\pm}} A^{\pm} e(u^{\pm}) : e(V) \x \t \right\}
\\
-& \int_0^T \int_{\omega} \int_{Y_0} \varrho^\lay \partial_t [u^\lay]_1 \partial_t V_1 \y\x'\t \\
+& \int_0^T \int_{\omega} \int_{Y_0} A^\lay \left[ e_{x'}(\hat{u}^\lay)  - y_1 \nabla_{x'}[u^\lay]_1 + e_y(u^{\lay,2}) \right] : \left[ e_{x'}(\bar{U}) - y_1 \nabla_{x'}^2 V_1 \right] \y\x'\t
\\
&= \sum_{\pm} \left\{ \int_0^T \int_{\Omega^{\pm}} \varrho^{\pm} f^{\pm}   \cdot V \x \t  + \int_{\Omega^{\pm}} \varrho^{\pm} u_1^{\pm} \cdot V(0) \x + \int_0^T \int_{\Gamma_N} g \cdot V \mathrm{d}\sigma(x) \t\right\}
\\
&\hspace{3em} + \int_0^T \int_{\omega} \int_{Y_0} \varrho^\lay f_1^\lay V_1 \y \x' \t + \int_{\omega} \int_{Y_0} \varrho^\lay [u^\lay_1]_1 V_1 \y \x'.
\end{align*}
After another elementary calculation, we are left with 
\begin{align*}
\int_{Y_0} A^\lay &\left[ e_{x'}(\hat{u}^\lay)  - y_1 \nabla_{x'}[u^\lay]_1 + e_y(u^{\lay,2}) \right] : \left[ e_{x'}(\bar{U}) - y_1 \nabla_{x'}^2 V_1 \right] \y
\\
&= a^{\ast} e_{x'}(\hat{u}^\lay)  : e_{x'}(\bar{U}) + b^{\ast} \nabla_{x'}^2 [u^\lay]_1 : e_{x'}(\bar{U}) + b^{\ast} e_{x'} (\hat{u}^\lay) : \nabla_{x'}^2 V_1 + c^{\ast} \nabla_{x'}^2 [u^\lay]_1 : \nabla_{x'}^2 V_1
\end{align*}
with the effective coefficients $a^{\ast},b^{\ast},c^{\ast} \in \R^{2\times 2 \times 2 \times 2}$ defined by 
\begin{align}
\begin{aligned}
\label{def:effective_coefficients_plate}
a^{\ast}_{\alpha\beta\gamma\delta} &:=  \frac{1}{\vert Z^s \vert} \int_{Z^s} A  \left(D_y(\chi_{\alpha \beta}^\mathrm{A}) + M_{\alpha\beta}\right): \left(D_y (\chi_{\gamma\delta}^\mathrm{A})  + M_{\gamma\delta} \right)\y,
\\
b^{\ast}_{\alpha\beta\gamma\delta} &:=   \frac{1}{\vert Z^s \vert} \int_{Z^s} A \left(D_y(\chi_{\alpha \beta}^\mathrm{B})  - y_3 M_{\alpha\beta} \right) : \left(D_y (\chi_{\gamma\delta}^\mathrm{A})  + M_{\gamma\delta} \right)\y,
\\
c^{\ast}_{\alpha\beta\gamma\delta} &:=   \frac{1}{\vert Z^s \vert} \int_{Z^s} A  \left(D_y(\chi_{\alpha \beta}^\mathrm{B})  - y_3 M_{\alpha\beta} \right): \left(D_y (\chi_{\gamma\delta}^\mathrm{B})  -y_3 M_{\gamma\delta}\right)\y,
\end{aligned}
\end{align}
$\alpha,\beta,\gamma,\delta = 1,2$, 
where $\chi_{ij}^\mathrm{A}$ and $\chi_{ij}^\mathrm{B}$ are solutions to $\eqref{eq:CellProblem_chi_ij}$ and $\eqref{cell_problem_chi_ij_B}$, respectively. This implies the desired result.
\end{proof}

\begin{appendix}
\section{Weak solutions for wave equation in the reference cell}
\label{sec:appendix_wave_equation}

In this appendix, we briefly summarise some properties of weak solutions to the wave equation in the reference cell with mixed boundary conditions. Such results seem to be standard but since they are frequently used when giving the representation for $u^\lay$ in the case $\gamma  =1$, we summarise them here.

We consider the following  abstract cell problem: Let  $g \in L^2(S \times \omega \times Y_0)^3$, $h \in H^1(S; L^2(\omega \times \partial Y_0))^3$, $\mu_0 \in L^2(\omega; \widetilde{H}_{\#}^1(Y_0))^3$ and $\mu_1 \in L^2(\omega \times Y_0)^3$  be given such that $\mu_0|_{S_Y^{\pm}} = h(0)|_{S_Y^{\pm}} $. Then, there exists a unique weak solution $\mu$ to the problem 
\begin{align*}
 \partial_t (\varrho^\lay \partial_t \mu) - \nabla_y \cdot (A^\lay e_y(\mu)) &= g &\mbox{ in }& S \times \omega \times Y_0,
 \\
\mu &= h &\mbox{ on }& S \times \omega \times (S_Y^+ \cup S_Y^-),
\\
-A^\lay e_y(\mu) \nu &= 0 &\mbox{ on }& S\times \omega \times \partial Y_1,
\\
\mu(0) &= \mu_0 &\mbox{ in }& \omega \times Y_0,
\\
\partial_t \mu(1) &= \mu_1 &\mbox{ in }& \omega \times Y_0,
\\
\mu \mbox{ is } Y'\mbox{-periodic}.
\end{align*}
Here, we say that $\mu $ is a weak solution if
\begin{align*}
\mu \in H^1(S;L^2(\omega \times Y_0))^3 \cap L^2(S \times \omega; \widetilde{H}_{\#}^1(Y_0))^3
\end{align*}
with $\mu(0) = \mu_0$ and $\mu =h$ on $ S \times \omega \times (S_Y^+ \cup S_Y^-)$,
and for arbitrary $\tau \in [0,T]$ and all $v \in L^2((0,\tau)\times \omega \times Y_0)^3$ with $\partial_t v \in L^2((0,\tau) \times \omega \times Y_0)^3$ such that $v|_{S_Y^{\pm}}=0$ and $v(\tau) = 0$ the following equality is satisfied:
\begin{align*}
- \int_0^\tau &\int_{\omega} \int_{Y_0} \varrho^\lay\partial_t \mu \cdot \partial_t v \y \x' \s + \int_0^\tau \int_{\omega} \int_{Y_0} A^\lay e_y(\mu) : e_y(v) \y \x' \s
\\
&= \int_0^{\tau}\int_{\omega} \int_{Y_0} g \cdot v \y \x' \t +   \int_{\omega} \int_{Y_0}   \varrho^\lay \mu_1\cdot v(0,x',y) \y \x'.
\end{align*}
The existence of a unique weak solution, even with improved regularity for $\partial_t (\varrho^\lay \partial_t \mu)$, is standard. To compute a suitable representation of the limit function $u^\lay$, obtained through the homogenisation process in the thin layer, we use the following lemma. Although it appears to be standard, we could not find a specific reference in the literature; therefore, we present it here (without proof) for the sake of completeness.
\begin{lemma}\label{lem:aux_weak_sol_wave_equation}
Let  $f\in L^2((0,T)\times \omega)^3$. Then, for arbitrary $\tau\in [0,T]$ and all $v \in L^2((0,\tau)\times \omega \times Y_0)^3$ with $\partial_t v \in L^2((0,\tau)\times \omega \times Y_0)^3$ and $v(\tau)=0$, the following equality holds:
\begin{align*}
-\int_0^{\tau} & \int_{\omega} \int_{Y_0} \int_0^t \varrho^\lay f(s,x') \partial_t \mu(t-s,x',y) \cdot \partial_t v(t,x',y) \s\y \x'\t
\\
&+  \int_0^{\tau} \int_{\omega} \int_{Y_0} \int_0^t f( s,x')  A^\lay e_y(\mu)(t-s,x',y) : e_y(v)(t,x',y) \s\y \x'\t
\\
=& \int_0^{\tau} \int_{\omega} \int_{Y_0} \varrho^\lay f(t,x') \mu_1(x',y) \cdot v(t,x',y)  \y \x'\t
\end{align*}
\end{lemma}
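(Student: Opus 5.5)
Throughout I assume that $\mu$ denotes the weak solution of the abstract cell problem with $g=0$, $h=0$, $\mu_0=0$ and initial velocity $\mu_1$, and that the test functions $v$ satisfy $v|_{S_Y^{\pm}}=0$. This matches the way the lemma is used for $\theta_i$ and $\eta_i^{\pm}$. Under this reading the lemma is a Duhamel principle. The function $w(t):=\int_0^t f(s)\mu(t-s)\s$ satisfies $\partial_t w=\int_0^t f(s)\partial_t\mu(t-s)\s$, since $\mu(0)=0$. The claimed identity then says that $w$ is a weak solution with zero initial data and source $\varrho^\lay f\mu_1$. My plan is to derive it by superposing shifted copies of the weak formulation for $\mu$.

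\textbf{Step 1 (regular $f$).} First take $f\in L^\infty((0,T)\times\omega)^3$. Fix $s\in(0,\tau)$ and define the shifted test function $v_s(r,x',y):=f_i(s,x')\,v(r+s,x',y)$ on $(0,\tau-s)$, working componentwise in $f$. It satisfies $v_s(\tau-s)=0$ and $v_s|_{S_Y^{\pm}}=0$, and it has the required $L^2$ regularity because $f(s,\cdot)$ is bounded. So $v_s$ is admissible in the weak formulation of $\mu$ on the interval $(0,\tau-s)$. Since $g=0$, this gives
\begin{align*}
-\int_0^{\tau-s}\!\!\int_{\omega\times Y_0}\varrho^\lay\partial_t\mu(r)\cdot f(s)\partial_t v(r+s)
+\int_0^{\tau-s}\!\!\int_{\omega\times Y_0}A^\lay e_y(\mu)(r):f(s)e_y(v)(r+s)
=\int_{\omega\times Y_0}\varrho^\lay\mu_1\cdot f(s)v(s).
\end{align*}

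\textbf{Step 2 (integrate in $s$).} Integrate this identity over $s\in(0,\tau)$. In the double integrals substitute $t=r+s$, so the region becomes $0<s<t<\tau$. Then use Fubini to move the $s$-integral inside the $t$-integral. This yields exactly the left-hand side of the lemma. The right-hand side becomes $\int_0^\tau\int\varrho^\lay f(t)\mu_1\cdot v(t)$. Fubini is justified because $\partial_t\mu$ and $e_y(\mu)$ lie in $L^\infty(S;L^2)$ by the energy estimate for the cell problem, while $f\in L^\infty$ and $\partial_t v, e_y(v)\in L^2$.

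\textbf{Step 3 (density).} Approximate a general $f\in L^2((0,T)\times\omega)^3$ by bounded functions $f_n\to f$ in $L^2$. Both sides of the identity are continuous in $f$ for fixed $v$. For the convolution terms this follows from Young's/Cauchy--Schwarz estimates in $t$, for example
\begin{align*}
\Big\|\int_0^t f(s)\partial_t\mu(t-s)\s\Big\|_{L^2}\le C\|f\|_{L^2}\|\partial_t\mu\|_{L^\infty(S;L^2(Y_0))},
\end{align*}
where the norm of $\partial_t\mu$ is taken uniformly in $x'$. That uniform bound needs a little care, since $\mu_1$ is only in $L^2$ in $x'$.

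\textbf{Main obstacle.} I expect the difficulty to be Step 3 together with the admissibility claim in Step 1, namely the integrability in $x'$. If $\mu_1=e_i$ is independent of $x'$, as for $\theta_i$ and $\eta_i^{\pm}$, the energy estimate gives bounds on $\partial_t\mu(\cdot,x',\cdot)$ and $e_y(\mu)(\cdot,x',\cdot)$ that are uniform in $x'$, and everything above is fine. Otherwise I would require $v$ to be bounded in $x'$ during the density argument. Since the problem decouples in $x'$, an alternative is to carry out the whole argument pointwise in $x'$ and integrate over $\omega$ only at the end.
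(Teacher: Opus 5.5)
The paper states this lemma without proof, so there is no argument of the authors to compare against. Your Duhamel-type argument is correct and is the natural proof: test the weak formulation of $\mu$ on $(0,\tau-s)$ with $f(s,\cdot)\,v(\cdot+s)$ for a.e.\ $s$, integrate over $s$, substitute $t=r+s$, apply Fubini, and remove the boundedness of $f$ by density. The extra hypotheses you add ($g=0$, $v|_{S_Y^{\pm}}=0$, and implicitly $e_y(v)\in L^2$) are indeed missing from the statement and are necessary.

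Two remarks sharpen your argument.

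First, Step 1 uses only $g=0$ and $v|_{S_Y^{\pm}}=0$. The data $h$ and $\mu_0$ enter the definition of a weak solution only as constraints and never appear in the resulting identity, so you should drop the assumption $h=\mu_0=0$. This matters because the lemma is also needed for $\chi_i^{\pm}$, where $h=\mu_0=\phi_i^{\pm}$ and $\mu_1=0$, to verify the weak equation for $w^\lay$. It is also why the lemma is phrased with $\int_0^t f(s)\,\partial_t\mu(t-s)\s$ rather than with the time derivative of the convolution, which would carry the additional term $f(t)\mu_0$.

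Second, the integrability problem you flag is a defect of the statement, not of your proof. For general $\mu_1\in L^2(\omega\times Y_0)^3$ and $f\in L^2((0,T)\times\omega)^3$, both sides contain products of three factors that are only square integrable in $x'$, so they need not be defined at all. Some hypothesis giving $x'$-uniform bounds is required, for instance $\mu_1\in L^\infty(\omega;L^2(Y_0))^3$ together with $x'$-uniform bounds on the other data. Combined with the uniform bounds on $A^\lay$ and $\varrho^\lay$, this yields $\partial_t\mu,\,e_y(\mu)\in L^\infty(\omega;L^2(S\times Y_0))$ via the energy estimate. These bounds hold for $\theta_i$, $\chi_i^{\pm}$ and $\eta_i^{\pm}$, whose data are independent of $x'$. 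Under such a hypothesis your Step 3 goes through as written, and so does your alternative of arguing for a.e.\ fixed $x'$ and integrating over $\omega$ at the end.
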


\end{appendix}



\bibliographystyle{abbrv}
\bibliography{literature}

\end{document}